\newtheorem{theorem}[equation]{Theorem}
\newtheorem{lemma}[equation]{Lemma}
\newtheorem{proposition}[equation]{Proposition}
\newtheorem{corollary}[equation]{Corollary}
\theoremstyle{remark}
\newtheorem{definition}[equation]{Definition}
\newenvironment{example}[1][Example]{\begin{trivlist}
\item[\hskip \labelsep {\bfseries #1}]}{\end{trivlist}}
\numberwithin{equation}{section}
\newcommand{\nc}{\newcommand}
\nc{\dmo}{\DeclareMathOperator}
\dmo{\Arr}{Arr}
\dmo{\Ch}{Ch}
\dmo{\Der}{D}
\dmo{\Ho}{Ho}
\dmo{\Mod}{Mod}
\dmo{\op}{op}
\dmo{\QCoh}{Qcoh}
\dmo{\Spec}{Spec}
\nc{\aka}{{a.\,k.\,a.\ }}
\nc{\eg}{\textsl{e.g.}}
\nc{\ie}{\textsl{i.e.\ }}
\nc{\cat}[1]{\mathscr #1}
\nc{\Cat}{\mathsf{Cat}}
\nc{\CAT}{\mathsf{CAT}}
\nc{\DER}{\bbD\mathrm{er}}
\nc{\inv}{^{-1}}
\nc{\isotoo}{\,\otoo{\cong}\,}
\nc{\HO}{\mathbb{HO}}
\nc{\Loop}{\underline{\bbN}}
\nc{\MMod}{\text{-}\kern-0.1em\Mod}%
\nc{\otoo}[1]{\overset{#1}{\,\too\,}}
\nc{\John}[1]{{[\color{Green}#1]}}
\nc{\Paul}[1]{{[\color{Blue}#1]}}
\nc{\too}{\mathop{\longrightarrow}\limits}
\nc{\threestars}{\medbreak\begin{center}*\ *\ *\end{center}}
\newcommand{\swtrans}
{\mathbin{\rotatebox[origin=c]{225}{$\Rightarrow$}}}
\newcommand{\netrans}
{\mathbin{\rotatebox[origin=c]{45}{$\Rightarrow$}}}
\newcommand{\nwtrans}
{\mathbin{\rotatebox[origin=c]{135}{$\Rightarrow$}}}
\nc{\bbA}{\mathbb{A}}
\nc{\bbD}{\mathbb{D}}
\nc{\bbE}{\mathbb{E}}
\nc{\bbN}{\mathbb{N}}
\nc{\bbZ}{\mathbb{Z}}
\title{Affine lines over derivators: Properties}
\date{\today}
\author{John Zhang}
\address{John Zhang, Mathematics Department, UCLA, Los Angeles, CA 90095-1555, USA}
\email{jmzhang@math.ucla.edu}
\urladdr{http://www.math.ucla.edu/$\sim$jmzhang}
\begin{document}

\maketitle

\begin{abstract}
Utilizing the construction of $\bbA^1_{\bbD}$ of \cite{BalmerZhang16}, we prove a number of properties between $\bbD$ and $\mathbb{A}^1_{\bbD}$, the most important of which is a universal property that mirrors the universal property of $R[t]$ over $R$ for rings. The universal property generalizes easily to affine space $\bbA^n_{\bbD}$ over the derivator $\bbD$.
\end{abstract}
\tableofcontents

\section{Introduction}

We begin with the following motivating question: if we have a morphism of schemes $V\rightarrow X$ and we know the triangulated structure of the derived category of $X$, can the derived category of $V$ be recovered as a triangulated category? The easiest case is the inclusion of an open subscheme $U\hookrightarrow X$, which is described as a Bousfield localization $D(U)\cong D(X)/D_Z(X)$, where $D_Z(X)$ is the full subcategory of complexes with cohomology supported on $Z=X-U$, see \cite{ThomasonTrobaugh90}. In the more general case of a separated, \'etale morphism Balmer has the following interesting result.

\begin{theorem}\cite[Theorem 3.5]{Balmer14pp}
Let $f:V\rightarrow X$ be a separated, \'etale morphism between quasi-compact and quasi-separated schemes. There is a monad $\bbA_f$, induced by the adjunction $f^*\dashv Rf_*$ on $D(X)$, with endofunctor $\bbA_f=Rf_*\circ f^*$, unit $\eta\colon\textrm{Id}_{D(X)}\rightarrow Rf_*\circ f^*$ induced by the unit for the adjunction, and multiplication $\mu\colon\bbA_f\circ \bbA_f\rightarrow\bbA_f$ induced by the counit $\epsilon\colon f^*\circ Rf_*\rightarrow\textrm{Id}_{D(X)}$. 

Moreover, $\bbA_f$ is a separable, exact monad, and there is a unique (triangulated) equivalence of triangulated categories $$E\colon D(V)\cong (\bbA_f\MMod)_{D(X)}$$ between the derived category of $V$ and the category of $\bbA_f$-modules in the derived category of $X$. Moreover, $f^*\colon D(X)\rightarrow D(V)$ becomes isomorphic to the extension of scalars $F_{\bbA_f}$ along $\bbA_f$, while $Rf_*\colon D(V)\rightarrow D(X)$ becomes isomorphic to the forgetful functor $U_{\bbA_f}$.
\end{theorem}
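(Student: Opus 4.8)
The plan is to separate the formal part from the geometric content. The monad structure on $\bbA_f = Rf_*f^*$ is automatic: every adjunction $f^*\dashv Rf_*$ yields a monad on $D(X)$ with unit and multiplication induced from the adjunction's unit and counit, via the standard Eilenberg--Moore construction. Exactness is equally easy, since $f^*$ and $Rf_*$ are triangulated functors, hence so is their composite. So the two things left to prove are that $\bbA_f$ is separable and that the comparison functor realizes $D(V)$ as $(\bbA_f\MMod)_{D(X)}$.

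For separability I would use the geometry of the relative diagonal $\Delta\colon V\to V\times_X V$. Since $f$ is \'etale it is unramified, so $\Delta$ is an open immersion; since $f$ is separated, $\Delta$ is a closed immersion; hence $\Delta$ is a clopen immersion and $V\times_X V \cong V \sqcup W$ with $W$ the open-and-closed complement of the diagonal. Flat base change along the flat morphism $f$ gives $f^*Rf_* \cong R(p_1)_*\,p_2^*$ for the two projections $p_1,p_2\colon V\times_X V\to V$, whence
\[
  \bbA_f\circ\bbA_f \;=\; Rf_*\,(f^*Rf_*)\,f^* \;\cong\; Rf_*\,R(p_1)_*\,p_2^*\,f^* \;\cong\; Rg_*\,g^*,
\]
where $g = f p_1 = f p_2\colon V\times_X V\to X$. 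The clopen splitting $V\times_X V = V\sqcup W$ decomposes $Rg_*g^*$ as $\bbA_f \oplus R(g|_W)_*\,(g|_W)^*$; chasing the base-change isomorphisms identifies the projection onto the first summand with $\mu$ and hence produces a natural section $\sigma\colon\bbA_f\to\bbA_f\bbA_f$ of $\mu$. The remaining, and fussiest, point is to check that $\sigma$ is a morphism of $\bbA_f$-bimodules, which reduces to a diagram chase with the coherence isomorphisms of flat base change (the two bimodule structures agree on the diagonal summand precisely because $p_1=p_2$ there); granting this, $\bbA_f$ is separable.

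For the equivalence, consider the comparison functor $E\colon D(V)\to(\bbA_f\MMod)_{D(X)}$, $\cat{F}\mapsto Rf_*\cat{F}$ with its tautological action; it is triangulated because modules over a separable monad on a triangulated category again form a triangulated category in a canonical way. To show $E$ is an equivalence I would argue by descent. Both sides, and $E$, are compatible with restriction to Zariski opens of $X$ (flat base change once more), so a triangulated Mayer--Vietoris argument reduces to the case $X$ affine. Moreover, every $\bbA_f$-module is supported on the open image $f(V)$ --- if $f^*M=0$ then the unit $M\to\bbA_f M$ vanishes and forces $M=0$ --- so neither $D(V)$ nor $(\bbA_f\MMod)$ changes upon replacing $X$ by $f(V)$, and one may assume $f$ surjective, i.e.\ faithfully flat. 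In that case $E$ being an equivalence is exactly effective descent of quasi-coherent complexes along the faithfully flat separated \'etale cover $f$ (the ``separable Neeman--Thomason'' phenomenon), which one settles by reducing to $V$ affine and invoking Neeman's description of $D(\Spec R)$ together with Thomason's localization theorem. Finally, unwinding $E$ shows that $f^*$ corresponds to extension of scalars $F_{\bbA_f}$ and $Rf_*$ to the forgetful functor $U_{\bbA_f}$; uniqueness follows since any triangulated equivalence $D(V)\simeq(\bbA_f\MMod)_{D(X)}$ compatible with the forgetful functors down to $D(X)$ must coincide with $E$ on objects and morphisms. I expect this globalization step --- coherently gluing the local comparison functors, and correctly handling non-affine and non-surjective $f$ --- to be the main obstacle.
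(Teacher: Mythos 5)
This statement is not proved anywhere in the paper you are reading: it is Balmer's Theorem~3.5, quoted verbatim from \cite{Balmer14pp} purely as motivation in the introduction, so there is no internal proof to compare your attempt against. Measured instead against the argument in the cited source, your outline has the right skeleton and matches Balmer's strategy at the key point: the monad structure and exactness are formal from the adjunction, and separability comes from the clopen relative diagonal of a separated \'etale map together with flat base change, which identifies $\bbA_f\circ\bbA_f$ with $Rg_*g^*$ for $g\colon V\times_X V\to X$ and splits off the diagonal summand as a bimodule section of $\mu$. That part of your sketch is essentially the proof.

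The soft spot is the equivalence $E\colon D(V)\simeq(\bbA_f\MMod)_{D(X)}$. Calling this ``exactly effective descent of quasi-coherent complexes along a faithfully flat \'etale cover'' overstates what is available: naive flat descent is not a theorem for unbounded derived categories, and monadicity of $Rf_*$ is precisely the content one has to establish, not something one can import. Balmer's route is different in substance: he first handles the case of an open immersion, where $\bbA_f$ is an \emph{idempotent} (Bousfield-type) monad and the equivalence is the Thomason--Trobaugh--Neeman localization theorem; he then globalizes by an induction on the number of affines in a cover of $X$, using Mayer--Vietoris gluing for module categories over a \emph{separable} monad --- separability is exactly what makes $(\bbA_f\MMod)_{D(X)}$ triangulated and makes the local comparison functors patch. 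Your reduction to $X$ affine and to $f$ surjective is consistent with this, but the step you defer as ``the main obstacle'' is where the actual work lives, and it is resolved by the separable Mayer--Vietoris induction rather than by an appeal to descent. One further small caveat: the triangulation on $(\bbA_f\MMod)_{D(X)}$ requires idempotent completeness of $D(X)$, which holds here because $D(X)$ has coproducts, but should be said.
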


Naturally, we direct the readers to \cite{Balmer14pp} for a full discussion of this result. For us, the interest is that we can construct the derived category of $V$ from $X$. However, both have the limitation that the dimensions of either the open subset $U$ or the \'etale cover $V$ must be the same as that of $X$. As a result, we can only investigate schemes with the same dimension as $X$. In \cite{BalmerZhang16}, we reconstruct the derived category of $\bbA^1_X$ and more generally the derived categories of $\bbA^n_X$ via the formalism of \emph{derivators} as values of the derivator associated to the scheme $X$. In contrast to previous work, we are not beholden to the use of triangulated categories. With derivator formalism, in \cite{BalmerZhang16} we define the notion of $\bbA^1$ and generally $\bbA^n$ of a derivator and here our main object of interest is the $\bbA^1$-derivator. 

Our line of inquiry is motivated by a number of interactions between the categories $R\MMod$ and $R[t]\MMod$, where $R$ is a commutative ring. In Section 2, we give the main definitions regarding derivators, as well as a number of necessary results. Section 3 recalls the construction of $\bbA^1$ over a derivator and state three important morphisms between a derivator $\bbD$ and its affine line $\bbA^1_{\bbD}$. 

In Section 4, we bring in the notion of \emph{monoidal derivator}, and we describe $\bbA^1_{\bbD}$ as a monoidal derivator if $\bbD$ is a monoidal derivator. In Section 5, we check that the morphisms defined in Section 3 are strong monoidal and also define a class of \emph{evaluation at $\alpha$} morphisms. Section 6 defines the notion of a closed monoidal derivator, and we prove that if $\bbD$ is a closed monoidal derivator, then $\bbA^1_{\bbD}$ with the monoidal structure of Section 4 is also a closed monoidal derivator. Section 7 contains two results, Theorem \eqref{maintheorem1} and Theorem \eqref{maintheorem2} that together describe the \emph{universal property} that $\bbA^1_{\bbD}$ has over $\bbD$. 

For a ring $R$, one way to think of an $R[t]$-module is as an $R$-module with a distinguished $R$-module endomorphism, corresponding to the multiplication by $t$. If we let $\underline{\bbN}$ denote the category with a single object $\bullet$ and $\textrm{Hom}(\bullet,\bullet)$ be the monoid $\bbN$ with composition given by addition, it is easy to see that $R[t]\MMod\cong R\MMod^{\underline{\bbN}}$. This extends to an isomorphism on the categories of unbounded chain complexes, see \cite[Lemma 9]{BalmerZhang16} and to the associated derivators, \cite[Theorem 5]{BalmerZhang16}. 

 We first pick out three distinguished extension of scalars functors between $R\MMod$ and $R[t]\MMod$:

\begin{enumerate}
\item The functor $R\MMod\rightarrow R[t]\MMod$ given by $M\mapsto M\otimes_R R[t]$. We call this the \emph{structure morphism}, and it is the extension of scalars functor along $R\hookrightarrow R[t]$. 
\item The functor $R[t]\MMod\rightarrow R\MMod$ given by $X\mapsto X\otimes_{R[t]} R[t]/(t-1)$. We call this the \emph{evaluation at 1 morphism}, and it is the extension of scalars functor along $R[t]\rightarrow R[t]/(t-1)\cong R$.
\item The functor $R[t]\MMod\rightarrow R\MMod$ given by $X\mapsto X\otimes_{R[t]} R[t]/(t)$. We call this the \emph{evaluation at 0 morphism}, and it is the extension of scalars functor along $R[t]\rightarrow R[t]/(t)\cong R$.
\end{enumerate}

There are many other functors $R[t]\MMod\rightarrow R\MMod$, for example the extension of scalars $R[t]\rightarrow R[t]/(t-r)\cong R$, for any $r\in R$, or a restriction of scalars along those morphisms. However, we give our preference to monoidal functors.

In the categories $R[t]\MMod$ and $R\MMod$, each has the structure of a symmetric monoidal category via the tensor product of modules. Viewing $R[t]\MMod$ as $R\MMod^{\underline{\bbN}}$, we'd like to describe the tensor product $-\otimes_{R[t]}-$ in terms of $-\otimes_R -$. So suppose we have two $R[t]$-modules, which we write as $(M,t_M)$ and $(N,t_N)$, here $t_M$ and $t_N$ being the (distinguished) multiplication by $t$  morphisms on the underlying $R$-modules $M$ and $N$. Merely considering the module $M\otimes_R N$, there are two distinguished maps on it, $t_M\otimes 1$ or $1\otimes t_N$. Naturally, for $M\otimes_{R[t]} N$ we want those two maps to be equal, so we take the coequalizer of $t_M\otimes 1$ and $1\otimes t_N$ to get the underlying $R$-module, and now $t_M\otimes 1$ and $1\otimes t_N$ on the coequalizer are now the same morphism.

Another equally good way to think about this situation is to consider $\underline{\bbN}$ as a symmetric monoidal category, with the tensor product of maps given by addition on $\bbN$. Then there is a natural symmetric monoidal structure defined on $R\MMod^{\underline{\bbN}}$ via the Day convolution. In fact, this is a good model for how we construct the monoidal derivator structure on $\bbA^1_{\bbD}$. It also admits an easier generalization to shifted derivators other than $\bbA^1_{\bbD}$. 

The next step is to define \emph{evaluation at $\alpha$} morphisms. In our context, for a ring $R$ and an element $r\in R$, we can define a functor $$-\otimes_{R[t]} R[t]/(t-r)\colon R[t]\MMod\rightarrow R\MMod,$$ by thinking of $R[t]/(t-r)\cong R$. These functors are monoidal, and unsurprisingly are sections of the extension of scalars along $R\hookrightarrow R[t]$. Our aim is to reproduce such functors in the general context of derivators. The trick is to look at the $R[t]$-module $R[t]/(t-r)$, and think of it as having underlying module $R$, multiplication by $t$ endomorphism $r$, and then think of $R$ as the monoidal unit in $R\MMod$, so that as an element in $R\MMod^{\underline{\bbN}}$, $R[t]/(t-r)$ takes the form $(\mathbbm{1}_{R\MMod},r)$. We make a brief interlude in Section 6 to discussed closed monoidal derivators, the derivator analogue of a closed monoidal category. 

Lastly, we can motivate the \emph{universal property of $\bbA^1$} as follows. The universal property of $R[t]$ over $R$ is easy to describe; let $S$ be another ring and $f_0\colon R\rightarrow S$ be a ring homomorphism. We can ask how to characterize ring homomorphisms $f\colon R[t]\rightarrow S$, and this simply involves a choice of morphism $f:R\rightarrow S$ and element $s\in S$, as we sent $t\in R[t]$ to $s\in S$, and then $f$  extends via linearity to all of $R[t]$, or equivalently all ring homomorphisms, $f\colon R[t]\rightarrow S$ can be characterized by their composition with $R\hookrightarrow R[t]$ and the choice of $f(t)=s\in S$. Here we prove an analoguous characterization for derivators. The two theorems encapsulating the main results are Theorem $\eqref{maintheorem1}$ and Theorem $\eqref{maintheorem2}$. 

Thus, we have defined a derivator $\bbA^1_{\bbD}$ associated to $\bbD$, and we verify that if under reasonable assumptions on $\bbD$, that it has all the good properties of an \emph{affine line} in the sense of $\bbA^1$ in algebraic geometry.

\section{Introduction to derivators}

Here we give only a brief discussion of the necessary definitions and theorems. For a discussion of derivators in generality, Grothendieck defined derivators in his manuscript \cite{Grothendieck91} and Groth's book project \cite{Groth16} gives a very detailed discussion, while \cite{Groth13} gives a compact introduction to the basic theory. 

We first define the notion of prederivator.

\begin{definition}
A prederivator $\bbD$ is a strict 2-functor $\bbD\colon\Cat^{\op} \too \CAT$.
\end{definition}

Here $\Cat$ is the 2-category of small categories, $\CAT$ the 2-category of all categories. The $\op$ is encoding the fact that a prederivator $\bbD$ reverses the direction of the 1-morphisms, that is if we have a prederivator $\bbD$ and a functor $u:I\rightarrow J$, then $$\bbD(u)=u^*\colon\bbD(J)\rightarrow\bbD(I).$$ For the 2-morphisms, given two functors $u,v:I\rightarrow J$ and a natural transformation $\alpha\colon u\rightarrow v$, we have an induced natural transformation in the same direction, $\alpha^*\colon u^*\rightarrow v^*$.

For a functor $u\colon A\rightarrow B$, we call the functor $u^*$ \emph{restriction along u} or \emph{pullback along u}. If in particular if $e$ denotes the terminal category with one object and the identity morphism, and $a\colon e\rightarrow A$ is the functor that sends the single object in $e$ to $a\in A$, then we call $a^*$ the \emph{value at a} functor. For $X\in\bbD(A)$ and $a\in A$, sometimes we may write $X_a$ for $a^*X$. For a prederivator $\bbD$, the category $\bbD(e)$ is called the \emph{underlying category} or \emph{base} of $\bbD$.

\begin{example}
Let $\cat C$ be any (possibly large) category. The \emph{represented prederivator} of $\cat C$ is defined to be the 2-functor $y_{\cat C}\colon\Cat^{\op}\too \CAT$ to take $I\mapsto \cat C^I$, with the usual pullbacks/natural transformations for functors and natural transformations.
\end{example}

We call objects in $\bbD(A)$ \emph{coherent diagrams of shape A}, to distinguish them from \emph{incoherent} diagrams of shape A, which are objects in $\bbD(e)^A$. However, coherent and incoherent diagrams are connected by the so-called ``partial underlying diagram functor defined as follows:

\begin{definition}
Let $\bbD$ be a prederivator, $I,J$ be small categories. The \emph{partial underlying diagram functor} is the functor $$\textrm{dia}_{J,I}\colon \bbD(I\times J)\rightarrow \bbD(I)^J,$$ which sends $X\in\bbD(I\times J)$ to the functor $f_X\colon J\rightarrow\bbD(I)$, where $f_X(j)=j^*X$ for each $j\in J$, and if $\alpha\colon j_1\rightarrow j_2$ is a morphism in $J$, then we get a natural transformation $\alpha^*\colon j_1^*\rightarrow j_2^*$, and so we define $f_X(\alpha):=\alpha^*(X):j_1^*X\rightarrow j_2^*X$.
\end{definition}

In particular, if $I=e$, this means we can think of an object of $\bbD(J)$ as a $J$-shaped diagram in $\bbD(e)$ via the partial underlying diagram functor. This functor is almost never an equivalence, nor does it need to be full/faithful/essentially surjective, but it still assists us in getting some intuition on objects of $\bbD(I)$ as $I$-shaped diagrams in $\bbD(I)$.

Next we give the definition of a derivator.

\begin{definition}
A derivator is a prederivator $\bbD\colon \Cat^{\op}\too\CAT$ satisfying the following conditions.

\begin{enumerate}
\item [Der1:] $\bbD\colon \Cat^{\op}\too\CAT$ takes finite coproducts to products, \ie $$\bbD(J_1\coprod J_2)\cong\bbD(J_1)\times\bbD(J_2).$$ In particular, $\bbD(\varnothing)$ is the terminal category.

\item [Der2:] For any $A\in\Cat$, a morphism $f\colon X\rightarrow Y$ is an isomorphism in $\bbD(A)$ if and only if the morphisms $$a^*f\colon a^*X\rightarrow a^*Y$$ are isomorphisms in $\bbD(e)$ for all $a\in A$.

\item [Der3:] For each functor $u\colon A\rightarrow B$, $u^*\colon \bbD(B)\rightarrow\bbD(A)$ has a left adjoint $u_!$ and a right adjoint $u_*$. $u_!$ and $u_*$ are also referred to as the (left/right) \emph{homotopy Kan extensions} along $u$.

\item [Der4:] For any functor $u\colon A\rightarrow B$ and any object $b\in B$, let us identify $b$ with the functor $b\colon e\rightarrow B$. We have a natural transformation given by 

\smallskip

\centerline{
\xymatrix{
(u/b) \ar[r]^{\textrm{pr}} \ar[d]_{\pi} &
A \ar[d]^{u}\ar@{}[dl]|\swtrans \ar@{}[dl]<-1.0ex>|\alpha\\
e \ar[r]_{b}&
B }}

\noindent Here $(u/b)$ is the slice category with objects, $(a\in A,f\colon u(a)\rightarrow b)$, and morphism $$(a,f)\rightarrow (a',f')$$ given by a morphism $g:a\rightarrow a'$ in $A$ such that $f'\circ u(g)=f$. $pr$ is the forgetful functor sending $(a,f)\mapsto a$, and $\pi$ the projection to $e$. Here the natural transformation $\alpha$ is constructed as follows: for an object $f:u(a)\rightarrow b$, the composition $u\circ\textrm{pr}$ sends it to $u(a)$, while the composition $b\circ\pi$ sends it to $b$. Thus the natural transformation $\alpha$ on the object $(f\colon u(a)\rightarrow b)$ is given by the morphism $f\colon u(a)\rightarrow b$ in $B$. By definition of $(u/b)$ it is easy to see that this patches to a natural transformation $u\circ\textrm{pr}\rightarrow b\circ\pi$. 

After applying $\bbD$, since $u^*$ and $\pi^*$ have left adjoints $u_!$ and $\pi_!$ respectively, we obtain the diagram.

\centerline{
\xymatrix{
\bbD(e)  &  \bbD(u/b) \ar[l]_{\pi_!} \ar@{}[dl]|\swtrans \ar@{}[dl]<-1.0ex>|\epsilon &
\bbD(A)\ar[l]_{\textrm{pr}^*} \ar@{}[dl]|\swtrans \ar@{}[dl]<-1.0ex>|{\alpha^*} & \ar@{}[dl]|\swtrans \ar@{}[dl]<-1.0ex>|\eta \\
 & \bbD(e)\ar[u]^{\pi^*}\ar@/^1.0pc/[ul]^{\textrm{Id}}&
\bbD(B)\ar[u]^{u^*} \ar[l]^{b^*} & \bbD(A) \ar[l]^{u_!} \ar@/_1.0pc/[ul]_{\textrm{Id}} }}

\noindent The morphisms in the two triangles are induced by the unit/counit transformations, respectively. The combined transformation is a morphism $$\textrm{Hocolim}_{(u/b)}\circ \textrm{pr}^*\rightarrow b^*\circ u_!.$$ We require this to be an isomorphism. Similarly, we have a diagram

\centerline{
\xymatrix{
(b/u) \ar[r]^{\textrm{pr}} \ar[d]_{\pi} &
A \ar[d]^{u}\\
e \ar[r]_{b}\ar@{}[ur]|\netrans \ar@{}[ur]<-1.0ex>|\alpha&
B }}

\noindent and a similar natural transformation $$b^*u_*\rightarrow \textrm{Holim}_{(b/u)} pr^*,$$ which we also require to be an isomorphism.
\end{enumerate}
\end{definition}

In particular, from (Der1) and (Der3) we see that $\bbD(A)$ has all finite coproducts and products and therefore also have initial and final objects.

\begin{example}
The following are some examples of derivators.
\begin{enumerate}
\item Let $\cat C$ be a complete and cocomplete category. The represented prederivator $y(\cat C)\colon J\mapsto\cat C^J$ is a derivator.
\item Let $\cat M$ be a model category and $\cat W$ the subcategory of weak equivalences. Then we have the \emph{homotopy derivator} $$\HO(\cat M,\cat W)\colon I\mapsto \cat M^I[(\cat W^I)^{-1}].$$ This is a theorem of Cisinski,\cite[Theorem 1]{Cisinski03}.
\item Let $\cat A$ be a Grothendieck abelian category. We can associate a derivator $\bbD_{\cat A}$ to $\cat A$ by $\bbD_{\cat A}\colon I\mapsto\mathcal{D}(\cat A^I)$, where $\mathcal{D}$ denotes the derived category.
\end{enumerate}
\end{example}

Since for any Grothendieck abelian category $\cat A$ the category of complexes $\textrm{Ch}(\cat A)$ has a natural model structure, the case of Grothendieck abelian categories is subsumed by the general model category case. Nevertheless, for the derivators associated to Grothendieck abelian categories as in case (3), their values are triangulated categories. Such derivators whose values are triangulated are called \emph{strong stable derivators}, and they are an important object of study in their own right. Strong stable derivators have a big advantage over usual triangulated categories, because both the suspension $\Sigma$ and class of distinguished triangles are determined by the natural structure of the derivator, and this makes cones functorial. For more information, see \cite[\textsection 4]{Groth13}, where given a strong stable derivator Groth constructs the natural triangulated structure.

While here we don't need our derivators to be \emph{strong stable}, the notion is undoubtedly important in general. However we will consider the notion of \emph{monoidal derivator}. Putting together the monoidal and triangulated sides of the equations gives us so-called \emph{tensor triangular categories}, which can be studied with geometric methods, see \cite{Balmer10} for more information.

There is one basic operation on derivators that we need to define, namely the ``shift'' of a derivator by any small category.

\begin{proposition}
Let $\bbD$ be a derivator and $L$ be a small category. Then the prederivator $$\bbD^L(I):=\bbD(L\times I)$$ is also a derivator.
\end{proposition}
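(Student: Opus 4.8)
The plan is to verify the axioms (Der1)--(Der4) for the prederivator $\bbD^L\colon I\mapsto\bbD(L\times I)$ one at a time, reducing each one to the corresponding property already known for $\bbD$. First I would record the basic functoriality: a functor $u\colon A\rightarrow B$ induces $\mathrm{id}_L\times u\colon L\times A\rightarrow L\times B$, and hence $(\mathrm{id}_L\times u)^*\colon\bbD(L\times B)\rightarrow\bbD(L\times A)$, which is by definition $u^*$ for $\bbD^L$; similarly natural transformations $\alpha\colon u\Rightarrow v$ induce $\mathrm{id}_L\times\alpha$ and then $(\mathrm{id}_L\times\alpha)^*$, and strictness of the $2$-functor $\bbD$ together with strictness of the product $L\times(-)\colon\Cat\rightarrow\Cat$ gives that $\bbD^L$ is again a strict $2$-functor.

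For (Der1), I would use that $L\times(-)$ preserves finite coproducts, so $L\times(J_1\amalg J_2)\cong (L\times J_1)\amalg(L\times J_2)$, and then (Der1) for $\bbD$ gives the required product decomposition; the empty-category case follows since $L\times\varnothing=\varnothing$. For (Der2), the key observation is that the objects of $L\times A$ are pairs $(\ell,a)$, and the evaluation functor $(\ell,a)\colon e\rightarrow L\times A$ factors as $e\xrightarrow{a}A$ followed by $\ell\times\mathrm{id}_A$ (up to the identification $e\times A\cong A$); conversely every object of $L\times A$ arises this way. Hence a morphism in $\bbD^L(A)=\bbD(L\times A)$ is pointwise an isomorphism in $\bbD^L(e)=\bbD(L)$ at every $a\in A$ iff it is pointwise an isomorphism in $\bbD(e)$ at every $(\ell,a)\in L\times A$, and (Der2) for $\bbD$ finishes it. For (Der3), the adjoints are built from those of $\bbD$: for $u\colon A\rightarrow B$, set $u_!:=(\mathrm{id}_L\times u)_!$ and $u_*:=(\mathrm{id}_L\times u)_*$, which exist by (Der3) for $\bbD$ and are adjoint to $(\mathrm{id}_L\times u)^*=u^*$ by functoriality of adjunctions.

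The main work is (Der4), where I would exploit the canonical isomorphism of slice categories
\[
\bigl((\mathrm{id}_L\times u)/(\ell,b)\bigr)\;\cong\;L_{/\ell}\times(u/b),
\]
together with compatibility of the projection and forgetful functors and of the comma-square $2$-cell under this identification; combining this with the fact that $\bbD$ satisfies (Der4) for the functor $\mathrm{id}_L\times u$ and the object $(\ell,b)$ yields that the canonical base-change morphism $\mathrm{Hocolim}\circ\mathrm{pr}^*\rightarrow(\ell,b)^*\circ(\mathrm{id}_L\times u)_!$ is an isomorphism; evaluating at all objects $(\ell,b)$ and invoking (Der2) for $\bbD^L$ (already proved) shows the base-change morphism is an isomorphism in $\bbD^L(e)=\bbD(L)$ at each point, hence an isomorphism. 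The dual argument with $(b/u)$ and $\mathrm{Holim}$ handles the right homotopy Kan extension. The step I expect to require the most care is checking that the comma-square $2$-cell $\alpha$ and the resulting canonical mate transformation are genuinely identified under the isomorphism of slice categories above — i.e.\ that no hidden coherence is lost — since everything else is a formal transport of structure through the $2$-functor $L\times(-)$.
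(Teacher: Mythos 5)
Your plan is correct, and it is essentially the standard argument: the paper itself gives no proof of this proposition but defers to \cite[Theorem 1.31]{Groth13}, and what you outline is in substance that proof. The reductions for (Der1)--(Der3) are exactly right, and for (Der4) the key identification $\bigl((\mathrm{id}_L\times u)/(\ell,b)\bigr)\cong L_{/\ell}\times(u/b)$ is the crucial input; the delicate point you flag (that the canonical mate transformations match up under this identification, i.e.\ compatibility of mates with pasting) is indeed where all the real work lives. One small correction of bookkeeping: in the last step of (Der4) the base-change morphism is a map in $\bbD^L(e)=\bbD(L)$, so the pointwise criterion you need is (Der2) for $\bbD$ applied to the category $L$ (checking after $\ell^*$ for each $\ell\in L$), not ``(Der2) for $\bbD^L$''; and note that reducing $\ell^*\circ(\mathrm{id}_L\times\pi_{(u/b)})_!$ to a homotopy colimit over $L_{/\ell}\times(u/b)$ is itself a second application of (Der4) for $\bbD$, not merely a transport of structure.
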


We direct the reader to \cite[Theorem 1.31]{Groth13} for the proof of this result.

Next we must define the notion of a \emph{homotopy exact square}.  

\begin{definition}
\label{defn:hexact}%
Consider a square with natural transformation: 

\centerline{
\xymatrix{
D \ar[r]^{v_2} \ar[d]_{u_1}&
A \ar[d]^{u_2}\ar@{}[dl]|\swtrans \ar@{}[dl]<-1.0ex>|\alpha\\
B \ar[r]_{v_1}&
C }}

It is said to be homotopy exact if for every derivator $\bbD$, the natural transformation below is an isomorphism

\centerline{
\xymatrix{
\bbD(D) \ar[d]_{(u_1)_!} &
\bbD(A) \ar[l]_{v_2^*}\ar[d]^{(u_2)_!}\\
\bbD(B)&
\bbD(C) \ar[l]^{v_1^*}\ar@{}[ul]|\nwtrans \ar@{}[ul]<-1.0ex>|{\alpha_!} }}

That is to say, we have a natural isomorphism $(v_1)^*(u_2)_!\cong (u_1)_! v_2^*$. 
\end{definition}

\cite[\textsection 1.2]{Groth13} and \cite[\textsection 3]{GrPoSh14a} contain more in-depth discussions of how to check whether squares are homotopy exact. We mention one technical theorem that we will employ repeatedly in this paper.

\begin{definition}
Let $A$ be a small category. Call $A$ \emph{homotopy contractible} if the counit $$(\pi_A)_!(\pi_A)^*\rightarrow\textrm{Id}_{\bbD(e)}$$ is an isomorphism for all derivators $\bbD$ where $\pi_A$ is the projection $A\rightarrow e$.
\end{definition}

Generally one can check whether a category $A$ is homotopy contractible by verifying that its nerve is contractible. More direct methods involve checking whether it has an initial or terminal object, or whether it can be connected via a zigzag of adjunctions to the terminal category $e$.

\begin{definition}
Consider a homotopy exact square as in Definition \ref{defn:hexact}. Let $\gamma$ be a morphism in $C$, $a\in A$ and $b\in B$ be objects. Define the category $(a/D/b)_{\gamma}$ to have objects triples $(d,f\colon a\rightarrow u_1(d),g\colon u_2(d)\rightarrow b)$ such that $v_1(g)\circ\alpha(d)\circ v_2(f)=\gamma$. 

The morphisms between two triples $$(d,f\colon a\rightarrow u_1(d), g\colon u_2(d)\rightarrow b)\rightarrow (d', f'\colon a\rightarrow u_1(d'), g'\colon u_2(d')\rightarrow b)$$ are morphisms $h\colon d\rightarrow d'$ in $D$ such that $u_1(h)\circ f=f'$ and $g'\circ u_2(h)=g$. 
\end{definition}

\begin{theorem}\cite[Theorem 3.8]{GrPoSh14a}
Consider a homotopy exact square 

\centerline{
\xymatrix{
D \ar[r]^{u_1} \ar[d]_{u_2} &
A \ar[d]^{v_2}\ar@{}[dl]|\swtrans \ar@{}[dl]<-1.0ex>|\alpha\\
B \ar[r]_{v_1}&
C }} as in  Definition \ref{defn:hexact}. The square is homotopy exact if and only if for all morphisms $\gamma\in C$ and objects $a\in A$ and $b\in B$, the category $(a/D/b)_{\gamma}$ is homotopy contractible.
\end{theorem}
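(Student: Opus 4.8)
I would deduce both implications from one reduction: after two applications of (Der4) the mate that measures homotopy exactness becomes the canonical comparison of homotopy colimits along an explicit functor between slice categories, and the comma categories of that functor turn out to be exactly the categories $(a/D/b)_{\gamma}$ of the statement.

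\emph{The reduction.} Homotopy exactness of the square means that, for every derivator $\bbD$, the canonical mate
\[
\theta\colon (u_2)_!\,u_1^{*}\;\Longrightarrow\; v_1^{*}\,(v_2)_!
\]
of $\alpha$ --- a transformation of functors $\bbD(A)\to\bbD(B)$ --- is invertible. By (Der2) applied in $\bbD(B)$ it suffices to check that $b^{*}\theta$ is invertible for each $b\in B$. Fixing $b$, I would compute the two composites with (Der4): writing $q\colon (v_2/v_1(b))\to A$ and $p\colon (u_2/b)\to D$ for the projections, one gets $b^{*}v_1^{*}(v_2)_!\cong\mathrm{Hocolim}_{(v_2/v_1(b))}q^{*}$ and $b^{*}(u_2)_!u_1^{*}\cong\mathrm{Hocolim}_{(u_2/b)}(u_1 p)^{*}$. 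There is a functor
\[
\Phi=\Phi_b\colon (u_2/b)\longrightarrow (v_2/v_1(b)),\qquad \bigl(d,\ g\colon u_2(d)\to b\bigr)\longmapsto\bigl(u_1(d),\ v_1(g)\circ\alpha(d)\bigr),
\]
well defined by naturality of $\alpha$ and satisfying $q\circ\Phi=u_1\circ p$. Unwinding the mate together with the two (Der4) isomorphisms, $b^{*}\theta$ is identified with the canonical comparison $\mathrm{Hocolim}_{(u_2/b)}\Phi^{*}q^{*}X\to\mathrm{Hocolim}_{(v_2/v_1(b))}q^{*}X$, $X\in\bbD(A)$. Finally, a direct comparison of definitions shows that for each object $a'=(a,\gamma)$ of $(v_2/v_1(b))$, i.e.\ each $\gamma\colon v_2(a)\to v_1(b)$, the slice $(a'/\Phi_b)$ is canonically isomorphic to $(a/D/b)_{\gamma}$.

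\emph{The two directions.} For ``$\Leftarrow$'', if every $(a/D/b)_{\gamma}$ is homotopy contractible then every slice of every $\Phi_b$ is, so $\Phi_b$ is homotopy final (homotopy colimits are invariant along it) by the cofinality theorem for derivators, \cite[\S3]{GrPoSh14a}, \cite{Groth13}; hence the comparison above is an isomorphism on \emph{all} inputs, in particular on the $q^{*}X$, and the square is homotopy exact. For ``$\Rightarrow$'' I would not argue through finality of $\Phi_b$, since homotopy exactness only gives the comparison on objects pulled back from $\bbD(A)$; instead I would specialize the hypothesis to the homotopy derivator $\HO(\mathbf{sSet})$ and feed in the representable diagram $A(a,-)\colon A\to\mathbf{Set}\hookrightarrow\mathbf{sSet}$. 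Using the standard description of the homotopy colimit of a set-valued diagram as the nerve of its category of elements, the target becomes the discrete set $\mathrm{Hom}_C(v_2(a),v_1(b))$ (each component of the relevant category of elements has an initial object), the source becomes $\coprod_{\gamma}N\bigl((a/D/b)_{\gamma}\bigr)$, and the comparison is the map collapsing each summand to its point of $\mathrm{Hom}_C(v_2(a),v_1(b))$. Homotopy exactness forces this to be a weak equivalence, so each $N\bigl((a/D/b)_{\gamma}\bigr)$ is weakly contractible; and, as recalled above, a category with contractible nerve is homotopy contractible for every derivator.

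\emph{Main obstacle.} The real content sits in ``$\Leftarrow$'': it uses the cofinality theorem, i.e.\ the fact that homotopy final functors are detected by contractibility of their slices, which is of the same depth as the present statement --- indeed it is the special case in which two corners of the square are $e$. A self-contained proof must therefore either establish that ingredient first, via the comma-square detection criteria of \cite[\S3]{GrPoSh14a}, or prove \cite[Theorem 3.8]{GrPoSh14a} in one stroke by a more elaborate decomposition of the two homotopy colimits; everything else is bookkeeping, where the one real pitfall is keeping the directions of $\alpha$, of the two mates, and of the slice and comma categories mutually consistent.
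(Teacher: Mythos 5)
The paper does not actually prove this statement: it is quoted from \cite{GrPoSh14a} and the text explicitly defers to that reference for the proof, so there is no internal argument to compare yours against. Judged on its own merits, your reduction is correct and is essentially the argument found in the literature: the mate $\theta$ is checked pointwise via (Der2), both composites are rewritten by (Der4) as homotopy colimits over $(u_2/b)$ and $(v_2/v_1(b))$, the functor $\Phi_b$ intertwines the two diagrams because $q\circ\Phi_b=u_1\circ p$, and the comma categories $(a'/\Phi_b)$ for $a'=(a,\gamma)$ are literally the categories $(a/D/b)_{\gamma}$. Your computation of the two sides on the representable $A(a,-)$ in $\HO(\mathbf{sSet})$ (homotopy colimit of a set-valued diagram as the nerve of its category of elements, with an initial object in each component of $(a/q)$) is also correct and does yield the forward implication. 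The two places where the sketch is not self-contained are exactly the ones you flag or half-flag: for ``$\Leftarrow$'' the cofinality theorem (homotopy finality detected by contractibility of slices), which, as you observe, is the special case $B=C=e$ of the very statement being proved and therefore cannot be quoted without circularity in a from-scratch proof; and, for ``$\Rightarrow$'', the closing step ``weakly contractible nerve implies homotopy contractible for every derivator,'' which is Cisinski's theorem on the minimal basic localizer (cf.\ \cite{Cisinski03}) and is no shallower than the cofinality theorem. Since the surrounding paper also invokes the latter fact without proof (``one can check whether a category is homotopy contractible by verifying that its nerve is contractible''), your proposal is at the same level of rigor as the text it would replace; a genuinely complete proof would have to supply those two external inputs, as your final paragraph honestly acknowledges.
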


We direct the reader to \cite{GrPoSh14a} for the proof. Frequently, we will appeal to this theorem to check that squares are homotopy exact.

Lastly, we define sieves and cosieves, which are important for defining the non-monoidal \emph{evaluation at 0} morphism.

\begin{definition}
Let $u\colon I\rightarrow J$ be a fully faithful functor that is injective on objects. 
\begin{enumerate}
\item Call $u$ a \emph{cosieve} if whenever we have a morphism $u(i)\rightarrow j$, then $j$ lies in the image of $u$.
\item Call $u$ a \emph{sieve} if whenever we have a morphism $k\rightarrow u(i)$, then $k$ lies in the image of $u$.
\end{enumerate}
\end{definition}

Homotopy left Kan extensions along cosieves and homotopy right Kan extensions along sieves are simple:

\begin{proposition}\cite[Prop 1.29]{Groth13}
Let $\bbD$ be a derivator.
\begin{enumerate}
\item Let $u\colon I\rightarrow J$ be a cosieve. Then the homotopy left Kan extension $u_!$ is fully faithful, and $X\in\bbD(J)$ lies in the essential image of $u_!$ if and only if $X_j\cong\varnothing$ for all $j\in J-u(I)$.
\item Let $u\colon I\rightarrow J$ be a sieve. Then the homotopy right Kan extension $u_*$ is fully faithful, and $X\in\bbD(J)$ lies in the essential image of $u_*$ if and only if $X_j\cong*$ for all $j\in J-u(I)$.
\end{enumerate}
\end{proposition}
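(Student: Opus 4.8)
The plan is to prove both statements from one template, carrying out (1) in detail and noting that (2) is its formal dual. Throughout write $\pi_S\colon S\to e$ for the projection of a small category $S$, and record three elementary facts. If $S$ has a terminal object $t$, then $t\colon e\to S$ is right adjoint to $\pi_S$, so applying $\bbD$ makes $t^*$ a left adjoint of $(\pi_S)^*$, whence $(\pi_S)_!\cong t^*$ by uniqueness of adjoints; dually, if $S$ has an initial object $s$ then $(\pi_S)_*\cong s^*$. And since $\bbD(\varnothing)$ is the terminal category by (Der1), $(\pi_\varnothing)_!$ sends its unique object to the initial object $\varnothing\in\bbD(e)$ and $(\pi_\varnothing)_*$ to the terminal object $\ast\in\bbD(e)$.

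First I would show $u_!$ is fully faithful, i.e.\ that the unit $\eta\colon\mathrm{Id}_{\bbD(I)}\to u^*u_!$ is an isomorphism. By (Der2) it suffices to evaluate at each $i\in I$; since $i^*u^*=(u(i))^*$, (Der4) identifies $(u(i))^*u_!$ with $\mathrm{Hocolim}_{(u/u(i))}\circ\mathrm{pr}^*=(\pi_{(u/u(i))})_!\circ\mathrm{pr}^*$. The key point is that, $u$ being fully faithful, the slice $(u/u(i))$ has the terminal object $(i,\mathrm{id}_{u(i)})$: any $(i',f\colon u(i')\to u(i))$ admits a unique morphism to it, given by the unique $g\colon i'\to i$ with $u(g)=f$. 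Applying the first elementary fact with $S=(u/u(i))$ and $t=(i,\mathrm{id}_{u(i)})$, and using $\mathrm{pr}(i,\mathrm{id}_{u(i)})=i$, gives $(u(i))^*u_!\cong i^*$; after checking that this identification is the one induced by $\eta$, full faithfulness follows.

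For the essential image I would analyze the counit $\epsilon\colon u_!u^*X\to X$ objectwise via (Der2). For $j=u(i)$, the triangle identity $u^*(\epsilon)\circ\eta_{u^*X}=\mathrm{id}$ together with $\eta$ being an isomorphism forces $(u(i))^*\epsilon$ to be an isomorphism, with no hypothesis on $X$. For $j\in J-u(I)$, the cosieve condition makes the slice $(u/j)$ empty, since an object $(i,f\colon u(i)\to j)$ would put $j$ in $u(I)$; hence by (Der4) and the remarks above $j^*u_!u^*X\cong(\pi_\varnothing)_!\mathrm{pr}^*u^*X\cong\varnothing$, so $j^*\epsilon$ is an isomorphism exactly when $X_j$ is also initial. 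Consequently $\epsilon$ is an isomorphism precisely when $X_j\cong\varnothing$ for all $j\notin u(I)$, which is the ``if'' direction ($X\cong u_!u^*X$ lies in the essential image); the ``only if'' direction is the same computation applied directly, namely if $X\cong u_!Y$ then $X_j\cong(\pi_{(u/j)})_!\mathrm{pr}^*Y\cong\varnothing$ for $j\notin u(I)$. Part (2) is obtained by dualizing: use the holimit half of (Der4), $b^*u_*\cong\mathrm{Holim}_{(b/u)}\circ\mathrm{pr}^*$; the coslice $(u(i)/u)$ has initial object $(i,\mathrm{id}_{u(i)})$, giving full faithfulness of $u_*$; the sieve condition makes $(j/u)$ empty for $j\notin u(I)$; and $(\pi_\varnothing)_*$ contributes the terminal object $\ast$, so the essential-image criterion becomes $X_j\cong\ast$ for $j\notin u(I)$.

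The one step I expect to need care is not any individual computation but the bookkeeping that the isomorphisms produced by (Der4) and by a (co)slice having a terminal (resp.\ initial) object genuinely are the canonical unit (resp.\ counit) maps, rather than merely abstract isomorphisms; once that naturality is pinned down, the rest is a direct application of (Der1), (Der2) and (Der4).
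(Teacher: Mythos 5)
Your proof is correct and is essentially the standard argument: the paper itself offers no proof of this proposition, simply citing \cite[Prop.~1.29]{Groth13}, and your argument reproduces the proof given there (slices with terminal/initial objects for full faithfulness, empty slices from the (co)sieve condition, and a pointwise analysis of the counit via (Der2) for the essential image). The one point you rightly flag --- that the isomorphism $(u(i))^*u_!\cong i^*$ assembled from (Der4) and the terminal object of $(u/u(i))$ really is the unit $\eta$ --- is the only bookkeeping step separating your sketch from a complete proof, and it is handled exactly as you anticipate in the cited reference.
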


These notions will become relevant for our discussion of the non-monoidal evaluation at 0 morphism. For the latter, it would also make sense to have the notion of a \emph{zero map} and hence also a notion of \emph{pointed} derivator.

\begin{definition}
We say a derivator is $\bbD$ is pointed if $\bbD(e)$ is pointed, \ie if it contains a zero object.
\end{definition}

If $\bbD(e)$ is a pointed category, that is sufficient to make $\bbD(I)$ a pointed category for every $I\in\Cat$ and also to make $u^*,u_!,u_*$ pointed functors for any functor $u\colon I\rightarrow J$ in $\Cat$.

Lastly, we discuss \emph{morphisms of derivators}.

\begin{definition}
A \emph{morphism of prederivators} $F\colon \bbD\rightarrow\bbE$ is a pseudonatural transformation of 2-functors. This means that for each $I\in\Cat$, we have a functor $F_I\colon \bbD(I)\rightarrow\bbE(I)$, and for every $u\colon A\rightarrow B$ in $\Cat$, we have a \emph{chosen} natural isomorphism $$\gamma_u^F\colon u^*F_B\rightarrow F_A u^*,$$ encoded in the following diagram with the usual coherence data.

\centerline{
\xymatrix{
\bbD(B) \ar[r]^{F_B} \ar[d]_{u^*} &
\bbE(B) \ar[d]^{u^*}\ar@{}[dl]|\swtrans \ar@{}[dl]<-1.75ex>|{\gamma_u^F}\\
\bbD(A) \ar[r]_{F_A}&
\bbE(A)  }}

Here, both the functors $F_I$ and the natural transformations $\gamma_u^F$ are part of the data of the morphism of prederivators.
\end{definition}

This is what we would \emph{technically} call a \emph{strong morphism of prederivators}. There are similar notions of \emph{lax morphism} and \emph{strict morphism} of prederivators with $\gamma_u^F$ merely being natural transformations or identities, respectively. Some authors prefer to restrict their attention to strict morphisms, but the inclusion of strong morphisms for this discussion is absolutely essential. 

Here, let PDER denote the 1-category of prederivators and strong morphisms. For us, a \emph{morphism} will mean a strong morphism.

\begin{definition}
A morphism of derivators is simply a morphism of prederivators, except that the source and target are derivators instead of merely prederivators.
\end{definition}

Important classes of morphisms of derivators are given by the following:

\begin{example}
For $\bbD$ a derivator, $A,B\in\Cat$ and a functor $u\colon A\rightarrow B$, there is a \emph{strict} morphism of derivators $$u^*\colon\bbD^B\rightarrow\bbD^A,$$ which is defined at $I\in Cat$ by $$(u\times 1_I)^*\colon\bbD^B(I)=\bbD(B\times I)\rightarrow\bbD(A\times I)=\bbD^A(I).$$

Of course, if $\bbD$ is a derivator, then for a functor $u\colon A\rightarrow B$ in $\Cat$ we also have left and right Kan extensions $u_!,u_*$. We can similarly define morphisms of derivators $u_!,u_*\colon\bbD^A\rightarrow\bbD^B$. These are not strict.
\end{example}

The natural transformation $\gamma_u^F$ also induces natural transformations $$\gamma_{u_!}^F\colon u_!F_A\rightarrow F_Bu_!, \gamma_{u_*}^F\colon F_Bu_*\rightarrow u_*F_A$$ by composing with the unit and counit transformations as needed.

\begin{definition}
Let $F:\bbD\rightarrow\mathbb{E}$ be a morphism of derivators, $u:A\rightarrow B$ is a functor. $F$ \emph{preserves homotopy left Kan extensions along $u$} if the natural transformation $\gamma_{u_!}^F$ is an isomorphism, and $F$ \emph{preserves homotopy right Kan extensions along $u$} if the natural transformation $\gamma_{u_*}^F$ is an isomorphism.

$F$ is \emph{cocontinuous} if it preserves all homotopy left Kan extensions and $F$ is \emph{continuous} if it preserves all homotopy right Kan extensions.
\end{definition}

\begin{example}
For a functor $u\colon A\rightarrow B$, the morphism of derivators $u^*\colon\bbD^B\rightarrow\bbD^A$ is a strict morphism of derivators that is both continuous and cocontinuous.

The morphism of derivators $u_!\colon\bbD^A\rightarrow\bbD^B$ is a cocontinuous strong morphism of derivators, while $u_*\colon\bbD^A\rightarrow\bbD^B$ is a continuous strong morphism of derivators.
\end{example}

We say two derivators $\bbD$, $\mathbb{E}$ are equivalent if there is a morphism $F\colon\bbD\rightarrow\bbE$ such that $F_I$ is an equivalence for all $I\in\Cat$.

\subsection{Notation}

We fix some recurring notation. Let $\underline{\bbN}$ denote the category with one object and endomorphism monoid $\bbN=(\{0,1,2,3,\ldots\},+)$. In picture form,

\begin{equation}
\label{eq:loop}%
\Loop=
\xymatrix{\bullet \ar@(ur,dr)[]^-{\bbN}}
\end{equation}

The functor $i\colon e\rightarrow\underline{\bbN}$ takes the single object in $e$ to the single object in $\underline{\bbN}$, while we denote the projection $\underline{\bbN}\rightarrow e$ as $p$.

Let $[1]$ denote the poset $0<1$, and let $u\colon [1]\rightarrow\underline{\bbN}$ denote the functor that takes $0,1\rightarrow\bullet$ and the morphism $0\rightarrow 1$ to $1\in\textrm{Hom}(\bullet,\bullet)=\bbN$. Let $\ulcorner$ denote the poset category with its coordinates labelled

$$\begin{CD}
(0,0)@>>> (1,0)\\
@VVV\\
(0,1)
\end{CD}$$

We let $i_{[1]}\colon[1]\rightarrow\ulcorner$ be the functor that takes $0\rightarrow 1$ to $(0,0)\rightarrow (1,0)$. Let $\square$ denote the poset with coordinates labelled 

$$\begin{CD}
(0,0)@>>>(1,0)\\
@VVV @VVV\\
(0,1)@>>>(1,1)
\end{CD}$$

Let $i_{\ulcorner}$ denote the obvious inclusion of $\ulcorner$ into $\square$. Here we note that both $i_{[1]}$ and $i_{\ulcorner}$ are both sieves.

Lastly, consider the category $\underline{\bbN}\times\underline{\bbN}$. This is a category with one object, $\bullet$, and endomorphism monoid $\bbN\times\bbN$ with composition being coordinate-wise addition. Define the functor $+\colon \underline{\bbN}\times\underline{\bbN}\rightarrow\underline{\bbN}$ by taking $\bullet\in\underline{\bbN}\times\underline{\bbN}$ to $\bullet\in\underline{\bbN}$, and an element $(a,b)$ of the endomorphism monoid $\bbN\times\bbN$ to $a+b$ in the endomorphism monoid $\bbN$. As mentioned previously, this can also be obtained by thinking of $\underline{\bbN}$ as a symmetric monoidal category; then $+$ is the (symmetric) tensor product.

\section{Canonical morphisms between the base and the affine line}

Let $R$ be a commutative ring with unit. Remember then that $\bbA^1_{\textrm{Spec } R}$ is just $\textrm{Spec } R[t]$. We have three ``natural'' morphisms of rings and/or their spectra given by:

\begin{enumerate}
\item $R\hookrightarrow R[t]$, inducing the structure morphism $$\bbA^1_{\textrm{Spec } R}\rightarrow\textrm{Spec } R$$
\item $R[t]\rightarrow R[t]/(t)\cong R$, inducing the evaluation at 0 morphism $$\textrm{Spec } R\rightarrow\bbA^1_{\textrm{Spec } R}$$
\item $R[t]\rightarrow R[t]/(t-1)\cong R$, inducing the evaluation at 1 morphism $$\textrm{Spec }R\rightarrow\bbA^1_{\textrm{Spec } R}$$
\end{enumerate}

Precisely, we are applying the $\Spec$ functor to those ring homomorphisms above to get the requisite morphisms of affine schemes. We note that for a quasi-compact, quasi-separated scheme $\cat X$ with affine cover $\cat X=\cup_i\textrm{Spec } A_i$, we can take the corresponding cover for $\bbA^1_{\cat X}$ as $\cup_i\textrm{Spec } A_i[t]$. 

\begin{definition}
Let $R$ be a ring. The prederivator $\bbD_R\colon\Cat\too\CAT$ taking $I\mapsto D(R\MMod^I)$ is a derivator.  $R\MMod$ is Grothendieck abelian, and thus also $(R\MMod)^I$; then we take derived categories. Its base is the derived category of $R$, and we call it the \emph{derivator extending the derived category of $R$}.

Let $X$ be a scheme. The prederivator $\bbD_X\colon\Cat\too\CAT$ taking $I\mapsto D(QCoh(X)^I)$ is a derivator. Similarly, we know already that $\textrm{QCoh}(X)$ is Grothendieck abelian, and thus also $(\textrm{QCoh}(X))^I$ is Grothendieck abelian for any small category $I$. Its base is the derived category of quasi-coherent sheaves on $X$, and we call the \emph{derivator extending the derived category of $X$}. 
\end{definition}

Recall that if $X$ is further \emph{separated}, the derived category of quasi-coherent sheaves, $D(\textrm{QCoh}(X))$, is isomorphic to the usual derived category with quasi-coherent cohomology, $D_{\textrm{QCoh}}(X)$, see \cite[Corollary 5.5]{BokstedtNeeman93}. So with the mild addditional condition of separated-ness, we end up working with the ``usual'' derived categories of our scheme $X$.

Here we have equivalences of derivators between $\bbD_R$ and $\bbD_{\textrm{Spec R}}$ for a ring $R$, owing to the isomorphism between $\textrm{QCoh}(\textrm{Spec} R)$ and $R\MMod$.

Moreover, a ring homomorphism $f:R\rightarrow S$ induces morphisms on their corresponding derivators via derived extension of scalars along $f$ and restriction of scalars along $f$. Similarly, given a morphism $g:X\rightarrow Y$ of schemes, there are morphisms of derivators induced by the derived direct and inverse image functors along $g$. With $R$ and $R[t]$, we can describe some of these functors in a diagrammatic manner.

The above ring homomorphisms between $R$ and $R[t]$ generate morphisms between the categories $R\MMod$ and $R[t]\MMod$ via extension of scalars, which extend to morphisms of derivators. These will be our models for the evaluation at 0, evaluation at 1, and structure morphisms.

We first define the structure morphism.

\begin{definition}
The structure morphism of a derivator $\bbD$ and its affine line $\bbA^1_{\bbD}$ is given by the left Kan extension morphism $i_!$ of derivators $$i_!\colon\bbD\rightarrow\bbD^{\underline{\bbN}}.$$
\end{definition}

On affine schemes, the structure morphism is the map $\textrm{Spec} f\colon\bbA^1_R\rightarrow\textrm{Spec} R$ induced by $f\colon R\hookrightarrow R[t]$. Then $i^*$ is also just the direct image $f_*$, while $i_!$ is the inverse image functor $f^*$, so in the case of derivators associated to affine schemes, our definition of structure morphism extends the usual definition of structure morphism.

In particular, we should keep in mind the following diagram:

\centerline{
\xymatrix{
\bbD_R\ar[r]^{i_!} \ar[d]_{1}&\bbA^1_{\bbD_R} \ar[d]^{\cong}\\
\bbD_R \ar[r]_{\textrm{Spec} f^*}& \bbD_{R[t]} }} which explains that the choice of $i_!$ is indeed appropriate. 


Next, we have the evaluation at 1 morphism. It is also a homotopy left Kan extension.

\begin{definition}
The ``evaluation at 1'' morphism relating a derivator $\bbD$ and its affine line $\bbA^1_{\bbD}$ is given by the homotopy left Kan extension morphism of derivators $$p_!\colon\bbD^{\underline{\bbN}}\rightarrow\bbD.$$ 
\end{definition}

We can see this in case of an affine scheme $\textrm{Spec} R$, when we have the evaluation at 1 map $\textrm{Spec} f^*\colon\textrm{Spec } R\rightarrow\bbA^1_R$ induced by the ring homomorphism $$f\colon R[t]\rightarrow R[t]/(t-1)\cong R$$ The map $p^*\colon R\MMod\rightarrow R\MMod^{\underline{\bbN}}=R[t]\MMod$ is the direct image functor $f_*$ and $p_!$ is the inverse image functor $f^*$. So the evaluation at 1 map, in the case of an affine scheme, can be thought of as induced by the usual scheme-theoretic ``evaluation at 1'' map.

Again, this  should be envisioned in the below diagram: 

\centerline{
\xymatrix{
\bbA^1_{\bbD_R}\ar[r]^{i_!} \ar[d]_{1}&\bbD_R \ar[d]^{\cong}\\
\bbD_{R[t]} \ar[r]_{\textrm{Spec} f^*}& \bbD_R}}

Later when we have the machinery of monoidal derivators, we will be able to give a unified definition of evaluation at 0 and 1 along with other ``coherent endomorphisms.'' For now, we will have to stick with a somewhat unwieldy definition for evaluation at 0.
Let us assume that the derivator $\bbD$ is now pointed, so that the right Kan extension $i_{[1] *}$ is just extension by zero. 

Given a derivator $\bbD$ and $X\in\bbD(\underline{\bbN})$ we can apply pullback via the functor $u\colon [1]\rightarrow\underline{\bbN}$ to obtain $u^*(X)\in\bbD([1])$. 

From there, we include $i_{[1]}\colon[1]\rightarrow\ulcorner$ and $i_{\ulcorner}\colon\ulcorner\rightarrow\square$. In terms of underlying diagrams, the composition $i_{\ulcorner !}i_{[1] *}u^*$ takes an element $(M,f)$ in $\bbD^{\underline{\bbN}}$ to the (coherent) homotopy pushout square (i.e. element of $\bbD(\square)$)  below. As $i_{[1]}$ is a sieve, the bottom left corner of the coherent diagram is the 0 object. Left Kan extension along $i_{\ulcorner}$ is just making a homotopy pushout square. Identifying the bottom right corner gives us the proposed evaluation at zero map$\text{ev}_0=(1,1)^*i_{\ulcorner !}i_{[1] *} u^*$. Here the underlying diagram looks like

\centerline{
\xymatrix{
M\ar[r]^{f}\ar[d]& M\ar[d]\\
0\ar[r] & M/f(M)
}}

Each operation in the composition is a morphism of derivators, hence also $\textrm{ev}_0$. We may simplify this one step further. The composition $(1,1)^*i_{\ulcorner !}$ is actually just the homotopy colimit of the $\ulcorner$-shaped diagram, and we can write it as $\pi_{\ulcorner !}$. 

\begin{proposition}
The evaluation at 0 morphism is $\text{ev}_0=(\pi_{\ulcorner})_!i_{[1] *} u^*$.
\end{proposition}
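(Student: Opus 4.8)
The plan is to establish the single simplification claimed, namely that the composite $(1,1)^{*}i_{\ulcorner !}$ appearing in $\text{ev}_0=(1,1)^{*}i_{\ulcorner !}i_{[1] *}u^{*}$ agrees, as a morphism of derivators $\bbD^{\ulcorner}\to\bbD$, with the homotopy colimit $(\pi_{\ulcorner})_{!}$. Once the natural isomorphism $(1,1)^{*}i_{\ulcorner !}\cong(\pi_{\ulcorner})_{!}$ is in hand, substituting it into the definition of $\text{ev}_0$ gives $\text{ev}_0=(\pi_{\ulcorner})_{!}i_{[1] *}u^{*}$; since each of the three factors is a morphism of derivators, so is the composite, which is all that is asserted.

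To produce the isomorphism, first I would invoke the pointwise formula for homotopy left Kan extensions: axiom (Der4), applied to the inclusion $i_{\ulcorner}\colon\ulcorner\to\square$ and the object $(1,1)\in\square$ (viewed as $(1,1)\colon e\to\square$), presents $(1,1)^{*}i_{\ulcorner !}$ as $\textrm{Hocolim}_{(i_{\ulcorner}/(1,1))}\circ\,\textrm{pr}^{*}$, where $\textrm{pr}\colon(i_{\ulcorner}/(1,1))\to\ulcorner$ is the forgetful functor. Equivalently, the associated comma square is homotopy exact, by (Der4) or by the criterion \cite[Theorem 3.8]{GrPoSh14a}, and it is in this guise that the statement survives passage to the shifted derivators $\bbD^{I}$ and thereby upgrades to an identity of morphisms of derivators, with the usual coherence data.

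The one computational step is to identify the comma category: because $\square$ is a poset and $(1,1)$ is its terminal object, each object $c$ of $\ulcorner$ admits exactly one morphism $i_{\ulcorner}(c)\to(1,1)$ in $\square$, and every triangle of such morphisms commutes automatically, so $\textrm{pr}\colon(i_{\ulcorner}/(1,1))\to\ulcorner$ is an isomorphism of categories. Under this identification $\textrm{pr}^{*}$ is the identity and $\textrm{Hocolim}_{(i_{\ulcorner}/(1,1))}$ becomes $\textrm{Hocolim}_{\ulcorner}=(\pi_{\ulcorner})_{!}$, which is exactly the claim. I do not expect a real obstacle here: the only points deserving care are that $\textrm{pr}$ be a strict isomorphism of categories (not merely homotopy final), so that the last identification is literal, and that one keep straight that $(1,1)$ is the pushout corner of $\square$ — the vertex carrying $M/f(M)$ in the underlying square displayed above — so that (Der4) is applied at the correct object.
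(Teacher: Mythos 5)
Your proposal is correct and matches the paper's (essentially unstated) argument: the paper simply asserts that $(1,1)^{*}i_{\ulcorner !}$ is the homotopy colimit over $\ulcorner$, and your use of (Der4) together with the observation that $(i_{\ulcorner}/(1,1))\cong\ulcorner$ (since $(1,1)$ is terminal in the poset $\square$) is exactly the justification that observation requires. No gaps.
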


We see also that this fits the ``evaluation at 0'' morphism for affine schemes, in that case being the map $\textrm{Spec} f\colon \textrm{Spec} R\rightarrow\bbA^1_R$, induced by the homomorphism $f\colon R[t]\rightarrow R[t]/(t)$. We see directly that the construction emulates $$M\mapsto M\otimes_{R[t]} R[t]/(t)$$ for an $R[t]$-module $M$.

Specifically we are referring to the following diagram that commutes up to natural isomorphism: here $\bbD_R$ is the derivator associated to the ring $R$ as usual; 

\centerline{
\xymatrix{
\bbA^1_{\bbD_R}\ar[r]^{i_!} \ar[d]_{1}&\bbD_R \ar[d]^{\cong}\\
\bbD_{R[t]} \ar[r]_{\textrm{Spec} f^*}& \bbD_R}}


\bigskip

\section{Monoidal structures}

\subsection{Definitions on monoidal derivators}

As our intuition comes from algebraic geometry, in many situations our derivators $\bbD$ will have a monoidal structure. An important consideration would be to construct a monoidal structure on $\bbA^1_{\bbD}$ out of the monoidal structure on $\bbD$ and information internal to $\bbD$. If $\bbD$ were the derivator associated to a ring $R$, the monoidal structure on $\bbD$ would be induced by $-\otimes_R -$, while on $\bbA^1_{\bbD}$ the ``correct'' monoidal structure should be induced by $-\otimes_{R[t]} -$. In the introduction, we briefly described how we can relate the two monoidal structures by adding in what was essentially a coequalizer. Here we will construct the monoidal structure in a more formal manner, using pullbacks and left Kan extensions.

First, we give the basic definitions around monoidal derivators. We lift much of the exposition below from \cite{GrPoSh14b}.

\begin{definition}\cite[Definition 3.1]{GrPoSh14b}
A monoidal prederivator is a pseudomonoid object in $\textrm{PDER}$.

This means that $\bbD$ is a monoidal prederivator if there is a product $$\otimes\colon\bbD\times\bbD\rightarrow\bbD$$ that gives compatible monoidal structures on each $\bbD(A)$, i.e. for any functor $u\colon A\rightarrow B$, the below diagram commutes-

\centerline{
\xymatrix{
\bbD(B)\times\bbD(B) \ar[r]^(0.6){\otimes_B} \ar[d]_{u^*\times u^*}&
\bbD(B) \ar[d]^{u^*}\ar@{}[dl]|\swtrans \ar@{}[dl]<-1.5ex>|{Id}\\
\bbD(A)\times\bbD(A)\ar[r]_(0.6){\otimes_A}&
\bbD(A) }}
 
Equivalently, we can say that there is a lift of $\bbD$ to the 2-category of monoidal categories and strong monoidal functors. Here we denote the monoidal structure on $\bbD(A)$ as $\otimes_A$. 
\end{definition}

Let $\bbD$ be a monoidal prederivator, $A\in\Cat$ and $a\in A$, then for $X,Y\in\bbD(A)$, $(X\otimes_A Y)_a=X_a\otimes Y_a$ by the commutativity of the above diagram.

\begin{definition}
The external product on a monoidal prederivator $(\bbD,\otimes)$ is defined as follows. Let $A,B$ be two small categories, $\pi_B\colon A\times B\rightarrow A$, $\pi_A\colon A\times B\rightarrow B$ be the projection functors onto each component. The external product $\boxtimes_{\bbD}$ is the following composition:

$$\begin{CD}
\bbD(A)\times\bbD(B)@>\pi_B^*\times\pi_A^*>>\bbD(A\times B)\times\bbD(A\times B)@>\otimes_{A\times B}>>\bbD(A\times B)
\end{CD}$$

\noindent This allows us to take two objects $X\in\bbD(A)$ and $Y\in\bbD(B)$, and define a product of them in $\bbD(A\times B)$. The monoidal structure on $\bbD(A)$ can be recovered from the external product via the formula

$$\begin{CD}
\bbD(A)\times\bbD(A)@>\boxtimes>>\bbD(A\times A)@>\Delta^*_A>>\bbD(A)
\end{CD}$$

where $\Delta_A\colon A\rightarrow A\times A$ is the diagonal functor on $A$, and is sometimes referred to as the \emph{internal product}.
\end{definition}

There are also mixed internal/external products, of the form 

$$\begin{CD}
\bbD(A\times B)\times \bbD(B\times C)@>>>\bbD(A\times B\times C)
\end{CD}$$

taking $$(X,Y)\mapsto (1_A\times\Delta_B\times 1_C)^*(X\boxtimes Y).$$ We should just think of this defining an external product in $\bbD^B$. The crux of the monoidal structure of the monoidal (pre)derivator is the external product. 

Next we define a monoidal derivator. 

\begin{definition}\cite[Definition 3.21]{GrPoSh14b}
A monoidal derivator is a derivator with product making it a monoidal prederivator, and whose product $\otimes\colon\bbD\times\bbD\rightarrow\bbD$ is co-continuous in each variable. More specifically, if $X\in\bbD(A)$, $Y\in\bbD(C)$, where $A,B,C,D$ are categories and $u\colon A\rightarrow B$ is a functor, then we have natural isomorphisms $(u\times 1_C)_!(X\boxtimes Y)\rightarrow (u_!X\boxtimes Y)$ and similarly for a functor $v\colon C\rightarrow D$ we have a natural isomorphism $(1_A\times v)_!(X\boxtimes Y)\rightarrow (X\boxtimes v_!Y)$.
\end{definition}

In terms of diagrams, we have that the following square commutes up to isomorphism:

\centerline{
\xymatrix{
\bbD(A)\times\bbD(C) \ar[r]^(0.5){u_!\times 1} \ar[d]_{\boxtimes} &
\bbD(B) \times\bbD(C) \ar[d]^{\boxtimes}\ar@{}[dl]|\swtrans \ar@{}[dl]<-1.0ex>|\cong\\
\bbD(B)\times\bbD(C)\ar[r]_(0.5){(u\times 1)_!}&
\bbD(B\times C) }}and similarly for the left Kan extension along $v\colon C\rightarrow D$.

Using (Der2) and (Der4), we note that the explicit verification of this cocontinuity condition need only be done for projections $\pi_A\colon A\rightarrow e$.


Now we construct the $\bbA^1_{\bbD}$-monoidal structure on $\bbD^{\underline{\bbN}}$ for a monoidal derivator $\bbD$. The $\bbA^1_{\bbD}$-monoidal structure should specialize to the usual monoidal structures for $\bbD=\bbD_R$ and $\bbA^1_{\bbD}=\bbD_{R[t]}$, extending the derived categories of $R\MMod$ and $R[t]\MMod$, that is to say, derived versions of $-\otimes_R -$ and $-\otimes_{R[t]}-$.

Recall from the introduction that the tensor $-\otimes_{R[t]} -$ is obtained via a sort of coequalizer. We can make the construction more amenable in the derivator context. If we were to take a hypothetical ``external product'' of $(M,t_M)$ and $(N,t_N)$ (thought of as objects in $(R\MMod^{\underline{\bbN}}$ with their external product landing in $R\MMod^{\underline{\bbN}\times\underline{\bbN}}\cong R[t_1,t_2]\MMod$), we would obtain an $R[t_1,t_2]$-module $$(M\otimes_R N,t_1=t_M\otimes 1:M\otimes_R N\rightarrow M\otimes_R N,t_2=1\otimes t_N\colon M\otimes_R N\rightarrow M\otimes_R N)$$ To obtain the result we want of $M\otimes_{R[t]} N$ we would take the extension of scalars along the ring homomorphism $R[t_1,t_2]\rightarrow R[t]/(t_1-t_2)\cong R[t]$.

Note that $+^*\colon R\MMod^{\underline{\bbN}}\rightarrow R\MMod^{\underline{\bbN}\times\underline{\bbN}}$ is the functor $$R[t]\MMod\rightarrow R[t_1,t_2]\MMod$$ where the action of $t$ on the underlying $R$-module is is used for both the actions of $t_1,t_2$ in the $R[t_1,t_2]$-module. But this is precisely restriction of scalars along the ring homomorphism $$R[t_1,t_2]\rightarrow R[t_1,t_2]/(t_1-t_2)\cong R[t]$$ Therefore, the extension of scalars along $R[t_1,t_2]\rightarrow R[t_1,t_2]/(t_1-t_2)$ is the left Kan extension, $+_!$. 

The generalization to derivators is now clear. First we write down the external product formula for $\bbD^{\underline{\bbN}}$ on just $\bbD^{\underline{\bbN}}(e)$. Let $X,Y$ be elements of $\bbD^{\underline{\bbN}}(e)=\bbD(\underline{\bbN})$ and let $(M,f)$ and $(N,g)$ denote their underlying diagrams. Then their external product $(M,f)\boxtimes (N,g)$ in $\bbD(\underline{\bbN}\times\underline{\bbN})$ looks like $(M\otimes N,f\otimes 1,1\otimes g)$. An application of $+_!$ will give precisely what we described above, and coincides with our description if $\bbD=\bbD_R$ for a ring $R$ and $\bbA^1_{\bbD}=\bbD_{R[t]}$. 

Since there are two distinct external products that we will often be using in relation to $\bbA^1_{\bbD}$, we let $\boxtimes_{\bbD}$ denote $\bbD$-external product and $\boxtimes_{\bbA^1_{\bbD}}$ denote $\bbA^1_{\bbD}$-external product. Similarly we let $\otimes_{\bbD}$ denote $\bbD$-tensor products, $\otimes_{\bbA^1_{\bbD}}$ denote $\bbA^1_{\bbD}$-tensor products.

\begin{proposition}
\label{A1monoid}
Let $(\bbD,\boxtimes_{\bbD})$ be a monoidal derivator. Define the external product on $\bbD^{\underline{\bbN}}$ as $X\boxtimes_{\bbA^1_{\bbD}} Y:=+_!(X\boxtimes_{\bbD} Y)$. This makes $\bbA^1_{\bbD}$ a monoidal derivator. Its monoidal unit is $i_!\mathbbm{1}_{\bbD}$. 
\end{proposition}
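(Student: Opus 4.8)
The plan is to build the monoidal structure on $\bbA^1_{\bbD} = \bbD^{\underline{\bbN}}$ from the external product $X \boxtimes_{\bbA^1_{\bbD}} Y := +_!(X \boxtimes_{\bbD} Y)$ by upgrading this formula, defined first on underlying categories, to a genuine pseudomonoid in $\textrm{PDER}$ and then checking cocontinuity. Concretely, $+\colon \underline{\bbN}\times\underline{\bbN}\to\underline{\bbN}$ is the tensor of the symmetric monoidal category $\underline{\bbN}$, so the entire construction is an instance of the Day-convolution monoidal structure on a shifted derivator $\bbD^L$ when $L$ is a (symmetric) monoidal category; I would structure the proof so that the heavy lifting is done once in that generality and then specialized to $L = \underline{\bbN}$. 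First I would record the internal tensor on each level: for $I \in \Cat$, $X, Y \in \bbD^{\underline{\bbN}}(I) = \bbD(\underline{\bbN}\times I)$, set $X \otimes_{\bbA^1_{\bbD}} Y := (\Delta_{\underline{\bbN}}\times 1_I)^*(+\times 1_I\times 1_I)_!\bigl((\pi_2^* X) \otimes_{\underline{\bbN}\times\underline{\bbN}\times I} (\pi_1^* X)\bigr)$ — i.e.\ transport the $\bbD$-level monoidal structure on $\bbD(\underline{\bbN}\times\underline{\bbN}\times I)$ through the two projections, then push along $+$, then restrict along the diagonal of $\underline{\bbN}$ (keeping $I$ fixed). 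This is manifestly functorial in $I$ for $I$-pullbacks because $+_!$ commutes with pullbacks in the $I$-variable by the cocontinuity half of the claim (applied to $\bbD$ itself), so one gets the pseudonaturality isomorphisms $\gamma_u^{\otimes}$ for free, giving a monoidal \emph{pre}derivator structure.

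Next I would verify the coherence data — associator, unitors, and (if one wants the symmetric statement) the braiding. These all come by transporting the corresponding structure isomorphisms of $\bbD$ along $+$, using three facts about $\underline{\bbN}$ as a monoidal category: associativity of $+$ ($+\circ(+\times 1) = +\circ(1\times +)$, strictly), the two-sided unit laws for the inclusion $i\colon e \to \underline{\bbN}$, and symmetry. For associativity the point is that the two ways of forming $(X\otimes Y)\otimes Z$ and $X\otimes(Y\otimes Z)$ both compute, after unwinding the projections, $(\text{triple-}\Delta)^* (+^{(3)})_!$ of the triple $\bbD$-external product, where $+^{(3)}\colon\underline{\bbN}^3\to\underline{\bbN}$ is the iterated sum; the comparison isomorphism is assembled from the $\bbD$-associator together with the identity $+\circ(+\times 1)=+\circ(1\times+)$ and the pseudofunctoriality of left Kan extensions $(gf)_!\cong g_! f_!$. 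For the unit: the claim asserts $\mathbbm{1}_{\bbA^1_{\bbD}} = i_!\mathbbm{1}_{\bbD}$, and to see this is a two-sided unit I would compute $i_!\mathbbm{1}_{\bbD}\otimes_{\bbA^1_{\bbD}} Y$. Since $\otimes_{\bbD}$ is cocontinuous (it is a monoidal derivator), $(i_! \mathbbm{1}_{\bbD})\boxtimes_{\bbD} Y \cong (i\times 1)_!(\mathbbm{1}_{\bbD}\boxtimes_{\bbD} Y)$, and $\mathbbm{1}_{\bbD}\boxtimes_{\bbD} Y \cong Y$ (unit of $\bbD$). So $(i_!\mathbbm{1}_{\bbD})\boxtimes_{\bbA^1_{\bbD}} Y \cong +_!(i\times 1)_! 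Y \cong (+\circ(i\times 1))_! Y$, and $+\circ(i\times 1_{\underline{\bbN}})\colon e\times\underline{\bbN}\cong\underline{\bbN}\to\underline{\bbN}$ is the identity (this is exactly the unit law for $+$); hence $(+\circ(i\times 1))_! \cong \mathrm{Id}$, giving $\mathbbm{1}_{\bbA^1_{\bbD}}\otimes Y\cong Y$. The right unit is symmetric. The pentagon, triangle, and hexagon axioms then reduce to the corresponding axioms in $\bbD$ after this transport — tedious but formal, and I would simply say they follow by functoriality of the construction and coherence in $\bbD$.

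Finally, cocontinuity: I must show $X \mapsto X\boxtimes_{\bbA^1_{\bbD}} Y$ preserves homotopy left Kan extensions, and by the remark after Definition~3.21 it suffices to check this for projections $\pi_A\colon A\to e$. For $X\in\bbD^{\underline{\bbN}}(A)$ and $Y\in\bbD^{\underline{\bbN}}(C)$, one chases $(\pi_A\times 1_C)_!(X\boxtimes_{\bbA^1_{\bbD}} Y)$: expanding the definition it is a composite of a pushforward along $+$ (which, being a left adjoint, commutes with everything in sight) and a $\bbD$-external product, and applying the cocontinuity of $\boxtimes_{\bbD}$ in the $A$-variable together with the commutation of the two left Kan extensions $(\pi_A)_!$ and $+_!$ (these act on independent factors $A$ and $\underline{\bbN}$, so their composite squares are homotopy exact) yields $(\pi_A)_! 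X \boxtimes_{\bbA^1_{\bbD}} Y$, and symmetrically in $C$. The main obstacle I anticipate is purely bookkeeping: keeping straight the multiple projections $\pi_1,\pi_2$, the many product categories $\underline{\bbN}^k\times I$, and which left Kan extension / pullback lives in which variable, so that the interchange isomorphisms can be legitimately applied; there is no conceptual difficulty, since each needed commutation is either cocontinuity of $\otimes_{\bbD}$, pseudofunctoriality of $(-)_!$, or homotopy exactness of a square of the form $A\times B \rightrightarrows$ (two variables acted on independently), but the combinatorics of indices is where care is required. I would therefore present the argument at the level of "transport the $\bbD$-structure along $+_!$" and relegate the index-juggling to the reader.
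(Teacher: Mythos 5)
Your proposal is correct and takes essentially the same approach as the paper, which likewise builds the structure as a Day convolution transported along $+_!$, checks cocontinuity by commuting $(\pi_J)_!$ past $+_!$ (they act on independent factors), and identifies the unit from $+\circ(i\times 1)=\mathrm{id}_{\underline{\bbN}}$, hence $+_!(i\times 1)_!\cong\mathrm{Id}$; if anything you are more thorough, since the paper does not spell out the associativity and coherence verification. One small correction: in your displayed formula for the internal tensor the diagonal restriction should be along $\Delta_I$, not $\Delta_{\underline{\bbN}}$ --- the two $\underline{\bbN}$-coordinates are merged by $+_!$ rather than by a diagonal, which is precisely the Day-convolution point --- but the rest of your argument uses the construction correctly.
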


\begin{proof}
$\bbD^{\underline{\bbN}}$ is certainly a derivator and the external product certainly defines a product morphism $$\bbD^{\underline{\bbN}}\times\bbD^{\underline{\bbN}}\rightarrow\bbD^{\underline{\bbN}},$$ so we merely need to prove that the external product is cocontinuous. 

To see that the external product on $\bbA^1_{\bbD}$ is cocontinuous, let $X\in\bbA^1_{\bbD}(J)$, $\pi_J:J\rightarrow e$ be the projection functor, and $y\in\bbA^1_{\bbD}(e)$. Consider the external products $(\pi_J)_!X\boxtimes_{\bbA^1_{\bbD}} Y$ and $(\pi_J\times 1)_!(X\boxtimes_{\bbA^1_{\bbD}} Y)$. 

Then in terms of $\boxtimes$, we have

\begin{eqnarray*}
((\pi_J)_!X\boxtimes_{\bbA^1_{\bbD}} Y)&=&+_!((\pi_J)_!X\boxtimes_{\bbD} Y)\\
&\cong&+_!((\pi_J\times1_{\underline{\bbN}})_! X\boxtimes_{\bbD} Y)\\
&\cong&+_!(\pi_J\times1_{\underline{\bbN}}\times 1_{\underline{\bbN}})_! (X\boxtimes_{\bbD} Y)\\
&\cong&(\pi_J\times 1_{\underline{\bbN}})_!+_!(X\boxtimes_{\bbD} Y)\\
&\cong& (\pi_J\times 1)_!(X\boxtimes_{\bbA^1_{\bbD}} Y)
\end{eqnarray*}

In particular, the middle equality comes from the commutativity of the diagram

$$\begin{CD}
J\times\underline{\bbN}\times\underline{\bbN}@>1_J\times +>>J\times\underline{\bbN}\\
@V\pi_J\times1_{\underline{\bbN}\times\underline{\bbN}}VV @V\pi_J\times1_{\underline{\bbN}}VV\\
\underline{\bbN}\times\underline{\bbN}@>+>>\underline{\bbN}
\end{CD}$$

\noindent Moreover, the monoidal unit for $\bbA^1_{\bbD}$ with $\boxtimes_{\bbA^1{\bbD}}$  is $i_!\mathbbm{1}_{\bbD}$. First note that the composition 

$$\begin{CD}
\underline{\bbN}@>(i\times 1)\textrm{ or } (1\times i)>>\underline{\bbN}\times\underline{\bbN}@>+>> \underline{\bbN}
\end{CD}$$ is the identity. We have that $+_!(i\times 1)_!\cong \textrm{id}_!\cong +_!(1\times i)_!$, and $\textrm{id}_!$ is an isomorphism (being an adjoint to the identity). Then we have for $Y\in\bbA^1_{\bbD}(I)$,

\begin{eqnarray*}
(i_!\mathbbm{1}\boxtimes_{\bbA^1_{\bbD}} Y)&=&+_!(i_!\mathbbm{1}\boxtimes_{\bbD} Y)\\
&=&+_!(i\times 1)_!(\mathbbm{1}\boxtimes_{\bbD} Y)\\
&\cong &(\mathbbm{1}\boxtimes_{\bbD} Y)\\
&\cong & Y
\end{eqnarray*}
\end{proof}

Note that this monoidal structure is an analogue of Day convolution. We may view $\underline{\bbN}$ as a natural symmetric monoidal category with only one object, where on maps $n\otimes m=n+m\in\bbN$. Then $+\colon \underline{\bbN}\times\underline{\bbN}\rightarrow\underline{\bbN}$ is just expressing the monoidal product and the usage of $+_!$ in the formula for $-\boxtimes_{\bbA^1_{\bbD}}-$ is the usual formula.

If $M$ is a commutative monoid, we let $\underline{M}$ denote its categorical analogue, i.e. the category with one object $\bullet$ and $\textrm{Hom}(\bullet,\bullet)=M$ with composition as addition in $M$. Then again we can view $\underline{M}$ as a symmetric monoidal category where the tensor product of maps is given by addition in $M$, and if $\bbD$ is a symmetric monoidal derivator, then $\bbD^{\underline{M}}$ has the natural structure of a symmetric monoidal derivator by defining the external product as $-\boxtimes_{\bbD^{\underline{M}}}-=(+_M)_!(-\boxtimes -)$. 

We make a brief observation about the monoidal structure on $\bbA^n_{\bbD}$. Recall from \cite{BalmerZhang16} that similar to $\bbA^1$ we may define $\bbA^n_{\bbD}=\bbD^{\underline{\bbN}^n}$. Alternatively, we can recursively define $$\bbA^{n+1}_{\bbD}=\bbA^1_{\bbA^n_{\bbD}},$$ which allows us to define the monoidal structure on $\bbA^n_{\bbD}$ by recurring the $\bbA^1_{\bbD}$-monoidal structure. 

The monoidal structure on $\bbA^n_{\bbD}$ can be expressed with the following external product:

\begin{eqnarray*}
X\boxtimes_{\bbA^n_{\bbD}} Y&:=& (+_{\underline{\bbN}^n})_!(X\boxtimes_{\bbD} Y)
\end{eqnarray*} Here it is not difficult to see that this is the same external product that we would obtain with an iterated $\bbA^1$-structure.


\section{Compatibility of canonical morphisms and monoidal structure}

In this section, we want to define a general \emph{evaluation at $\alpha$} morphism. As mentioned in the introduction, our model for this is the extension by scalars via the homomorphism $R[t]\rightarrow R[t]/(t-r)\cong R$, \ie $$-\otimes_{R[t]} R[t]/(t-r)\colon R[t]\MMod\rightarrow R\MMod$$ 

In the derivator case, these emph{evaluation at $alpha$} morphisms ought to be strong monoidal. Furthermore, the composition $$R\hookrightarrow R[t]\rightarrow R[t]/(t-r)\cong R$$ is the identity. The structure morphism is a model for extension of scalars along $R\hookrightarrow R[t]$, so the evaluation at $\alpha$ should form a section of the structure morphism.

First, we show that the structure morphism $i_!\colon\bbD\rightarrow\bbA^1_{\bbD}$ is a strong monoidal morphism under the $\bbA^1_{\bbD}$-monoidal structure as defined in the previous section.

\begin{proposition}
The structure morphism is strong monoidal, i.e. $i_!(X\boxtimes_{\bbD} Y)\cong (i_!X\boxtimes_{\bbA^1_{\bbD}} i_! Y)$.
\end{proposition}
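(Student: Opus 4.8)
The plan is to compute $i_!(X \boxtimes_{\bbD} Y)$ directly from the definition of the $\bbA^1_{\bbD}$-external product and reduce everything to cocontinuity of $\boxtimes_{\bbD}$ together with a single commuting square of index categories. Recall that $i\colon e \to \Loop$ is the inclusion of the base object, and that by Proposition \ref{A1monoid} we have $Z \boxtimes_{\bbA^1_{\bbD}} W = +_!(Z \boxtimes_{\bbD} W)$ where $+\colon \Loop \times \Loop \to \Loop$ is the addition functor. So the right-hand side $i_!X \boxtimes_{\bbA^1_{\bbD}} i_!Y$ equals $+_!\bigl((i_!X) \boxtimes_{\bbD} (i_!Y)\bigr)$, and the goal is to identify this with $i_!(X \boxtimes_{\bbD} Y)$.

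The key step is to push the two copies of $i_!$ out through the external product using cocontinuity of $\boxtimes_{\bbD}$ in each variable (the defining property of a monoidal derivator), obtaining
\[
  (i_!X) \boxtimes_{\bbD} (i_!Y) \;\cong\; (i \times 1)_!\bigl(X \boxtimes_{\bbD} (i_!Y)\bigr) \;\cong\; (i \times i)_!\bigl(X \boxtimes_{\bbD} Y\bigr),
\]
where $i \times i \colon e \times e \to \Loop \times \Loop$; here $X \boxtimes_{\bbD} Y$ is being regarded as an object of $\bbD(e \times e) = \bbD(e)$ via the obvious identification. Applying $+_!$ and using the composition law for homotopy left Kan extensions gives $+_!(i \times i)_! \cong (+ \circ (i \times i))_!$. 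Now $+ \circ (i \times i)\colon e \to \Loop$ is exactly the functor sending the object of $e$ to $\bullet$ and (there being no nontrivial morphisms in $e$) it agrees with $i$ itself — this is the elementary commuting triangle $+ \circ (i \times i) = i$ in $\Cat$. Hence $(+ \circ (i \times i))_! = i_!$, and the composite isomorphism reads $i_!X \boxtimes_{\bbA^1_{\bbD}} i_!Y \cong i_!(X \boxtimes_{\bbD} Y)$, as desired. One should also check (or remark) that this isomorphism is coherent with respect to the associativity, unit, and symmetry constraints, so that $i_!$ is genuinely strong monoidal and not merely strong monoidal on objects; this follows from naturality of the cocontinuity isomorphisms and of the Kan-extension composition isomorphisms, since all the squares involved are built from the single commuting triangle above.

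The main obstacle is bookkeeping rather than mathematical depth: one must be careful about which product category each object lives in, since $\boxtimes_{\bbD}$ produces objects on $A \times B$ and the cocontinuity isomorphisms of Definition \ref{A1monoid}'s surrounding discussion are stated for $(u \times 1)_!$ and $(1 \times v)_!$ separately, so the passage from $(i_!X)\boxtimes_{\bbD}(i_!Y)$ to $(i\times i)_!(X\boxtimes_{\bbD} Y)$ is a two-step application whose intermediate object lives on $e \times \Loop$. Likewise one should verify that the external-product statement $i_!(X\boxtimes_{\bbD}Y)\cong i_!X\boxtimes_{\bbA^1_{\bbD}}i_!Y$ for objects over $e$ upgrades to the internal tensor $\otimes$ and to objects over an arbitrary base $I$; this is routine given the mixed internal/external product formalism already set up, so I would either phrase the proposition in external-product form throughout or add a short closing remark deducing the internal statement by pulling back along a diagonal.
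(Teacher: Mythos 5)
Your argument is correct and is essentially the paper's own proof: both push the two $i_!$'s through $\boxtimes_{\bbD}$ via cocontinuity in each variable, compose the resulting Kan extensions, and conclude from the commuting triangle $+\circ(i\times i)=i$. Your added remarks on the two-step bookkeeping through $e\times\Loop$ and on coherence of the monoidal constraints are sensible but go slightly beyond what the paper records.
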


\begin{proof}
This is a straightforward computation, we have

\begin{eqnarray*}
(i_!X\boxtimes_{\bbA^1_{\bbD}} i_!Y)&=& +_!(i_! X\boxtimes_{\bbD} i_! Y)\\
&\cong& +_!(i\times 1)_! (X\boxtimes_{\bbD} i_! Y)\\
&\cong& +_!(i\times 1)_! (1\times i)_! (X\boxtimes_{\bbD} Y)\\
&\cong& (+\circ (i\times 1)\circ (1\times i))_! (X\boxtimes_{\bbD} Y)\\
&\cong& i_!(X\boxtimes_{\bbD} Y)
\end{eqnarray*}
\end{proof}

The construction of the \emph{evaluation at $\alpha$} morphism strongly mirrors the ring case.

\begin{definition}
Let $\bbD$ be a derivator, and $X$ be an object of $\bbD(\underline{\bbN})=\bbA^1_{\bbD}(e)$, such that $i^*X=\mathbbm{1}_{\bbD}$. Then call $X$ an \emph{coherent endomorphism of the unit}. We can write $\textrm{dia}_{\underline{\bbN},e}(X)=(\mathbbm{1},\alpha)$. In this case we call $X$ the \emph{coherent $\alpha$ endomorphism}.
\end{definition}

The term \emph{coherent $\alpha$ endomorphism} can be a bit deceptive, as there may be more than one object with the same underlying diagram. Now we can define what the \emph{evaluation at $\alpha$} morphism means.
 
Let $\bbD$ be a symmetric monoidal derivator and equip $\bbD^{\underline{\bbN}}$ with the $\bbA^1_{\bbD}$-monoidal structure. We would take a coherent endomorphism of the unit, i.e. an object in $\bbD(\underline{\bbN})$ with diagram $(\mathbbm{1},\alpha)$, take its $\bbA^1_{\bbD}$-external product any object in some $\bbA^1_{\bbD}(I)$, and then forget the endomorphism part originating in the $(\mathbbm{1},\alpha)$-direction. That is to say, we take $X\in\bbD^{\underline{\bbN}}(I)$ to $i^*+_!(X\boxtimes_{\bbD} (\mathbbm{1},\alpha))$.

We call $p^*\mathbbm{1}$ the \emph{coherent identity morphism}, and evaluation at 1 means evaluating at this particular element of $\bbD(\underline{\bbN})$. 

\begin{definition}
Let $\bbD$ be a symmetric monoidal derivator, and consider $\bbA^1_{\bbD}$ with the $\bbA^1_{\bbD}$-monoidal structure. The \emph{evaluation at (coherent) $\alpha$} morphism takes $Y\in\bbA^1_{\bbD}(I)$ to $i^*(Y\boxtimes_{\bbD} (\mathbbm{1}_{\bbD},\alpha))$. 
\label{ev alpha}
\end{definition}

We will show in due course that this is a strong monoidal morphism. First we show that it coincides with our previous notion of evaluation at 1.

\begin{lemma}
The definition of ``evaluation at 1'' given by $p_!$ and the new definition of evaluation at 1 coincide.
\end{lemma}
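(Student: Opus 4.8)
The plan is to unwind both definitions of evaluation at $1$ on underlying diagrams and recognize them as the same homotopy colimit. The "old" definition is the morphism $p_!\colon\bbD^{\underline{\bbN}}\to\bbD$, where $p\colon\underline{\bbN}\to e$ is the projection. The "new" definition, from Definition \ref{ev alpha}, takes $Y\in\bbA^1_{\bbD}(I)$ to $i^*(Y\boxtimes_{\bbD}(\mathbbm{1}_{\bbD},\alpha))$ evaluated at the coherent identity endomorphism, which by the preceding paragraph is $\alpha=1$, i.e.\ the object $p^*\mathbbm{1}_{\bbD}\in\bbD(\underline{\bbN})$. So concretely I must show that for all $I$ and all $Y\in\bbD^{\underline{\bbN}}(I)=\bbD(\underline{\bbN}\times I)$, there is a natural isomorphism
\[
i^*\,+_!\bigl(Y\boxtimes_{\bbD} p^*\mathbbm{1}_{\bbD}\bigr)\;\cong\;p_!\,Y,
\]
where on the left $i$ and $+$ are taken relative to the extra parameter $I$ (so really $i\times 1_I$ and $+\times 1_I$, suppressed as usual).

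The first step is to simplify $Y\boxtimes_{\bbD} p^*\mathbbm{1}_{\bbD}$. By definition of the external product this is $\otimes_{(\underline{\bbN}\times I)\times\underline{\bbN}}$ applied after the appropriate pullbacks; since $p^*\mathbbm{1}_{\bbD}$ is pulled back from the point and $\mathbbm{1}_{\bbD}$ is the monoidal unit, the projection formula / unitality of the monoidal derivator structure should give $Y\boxtimes_{\bbD} p^*\mathbbm{1}_{\bbD}\cong (\mathrm{pr})^*Y$ where $\mathrm{pr}\colon\underline{\bbN}\times I\times\underline{\bbN}\to\underline{\bbN}\times I$ forgets the last factor — in other words, tensoring with the pulled-back unit is just pullback along the projection that discards that copy of $\underline{\bbN}$. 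Next I apply $+_!$. The key observation is that the composite $+\colon\underline{\bbN}\times\underline{\bbN}\to\underline{\bbN}$ followed by $i^*\colon\bbD(\underline{\bbN})\to\bbD(e)$, together with the structure of the problem, reduces everything to a homotopy colimit over a category that is homotopy contractible relative to what we need. Concretely, I would show that the square
\[
\begin{CD}
\underline{\bbN}\times\underline{\bbN}\times I @>{+\times 1_I}>> \underline{\bbN}\times I\\
@V{p\times p\times 1_I}VV @VV{p\times 1_I}V\\
e\times e\times I @>>> e\times I
\end{CD}
\]
(suitably interpreted) commutes, and that after the reduction from the previous step the two sides of the desired isomorphism both compute $(p\times 1_I)_!Y$. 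More precisely: $i^*\,+_!(\mathrm{pr}^*Y)$; since $+\colon\underline{\bbN}\times\underline{\bbN}\to\underline{\bbN}$ has the property that $i^* +_! \cong (\pi)_!$ where $\pi\colon\underline{\bbN}\times\underline{\bbN}\to e$ (this is a standard homotopy-exactness/base-change fact — both $+$ postcomposed with the point inclusion, and direct projection, have contractible comma categories), and $\mathrm{pr}^*Y$ factors through the first $\underline{\bbN}$ factor, the homotopy colimit over the second $\underline{\bbN}$ factor is a colimit of a constant diagram over a homotopy contractible category (a single object with endomorphism monoid $\bbN$, whose nerve is $B\bbN\simeq S^1$ — wait, that is \emph{not} contractible), so I must be more careful: the colimit that collapses is the one along $+$, using that $+$ admits a section $i\times 1$ which is a homotopy equivalence in the relevant sense, giving $i^*+_!\cong$ (colimit along the remaining copy) $= p_!$ on the nose after identifying $\underline{\bbN}\times\underline{\bbN}\xrightarrow{+}\underline{\bbN}$.

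The cleanest route, and the one I would actually carry out, is to use the explicit homotopy-exactness criterion (\cite[Theorem 3.8]{GrPoSh14a}) on the square witnessing $i^*\circ(+\times 1_I)_!\cong (p\times 1_I)_!\circ(\mathrm{pr})^*$ or, equivalently, to observe that $+\colon\underline{\bbN}\times\underline{\bbN}\to\underline{\bbN}$ composed with $i\colon e\to\underline{\bbN}$ equals $p\times p\colon\underline{\bbN}\times\underline{\bbN}\to e$ — no, that is false on morphisms. Rather, the correct identity is that the outer square with $p$ on top is homotopy exact because the comma category involved has an initial or terminal object after fixing the $I$-coordinate. The \textbf{main obstacle} is precisely getting this base-change/homotopy-exactness step right: one must identify the correct small square relating $+$, $i$, $p$ and the projections, verify its homotopy exactness via contractibility of the relevant $(a/D/b)_\gamma$ categories (here the subtlety is that $\underline{\bbN}$ itself is \emph{not} homotopy contractible, so the cancellation must come from the section $i\times 1$ of $+$ and the fact that pulling back a unit kills the dependence on one $\underline{\bbN}$-factor, not from any factor being contractible). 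Once that square is in hand, the rest is bookkeeping with the monoidal projection formula and naturality, and the isomorphism is natural in $I$, hence a genuine isomorphism of morphisms of derivators $\bbD^{\underline{\bbN}}\to\bbD$.
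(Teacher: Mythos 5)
Your reduction is the same as the paper's: identify the new evaluation at $1$ with $i^*+_!(X\boxtimes_{\bbD}p^*\mathbbm{1})=i^*+_!(1\times p)^*X$ and then try to show $i^*+_!(1\times p)^*\cong p_!$. But the proposal stops exactly at the point where the lemma's actual mathematical content begins, and you say so yourself (``the main obstacle is precisely getting this base-change/homotopy-exactness step right''). You never pin down the correct square, and the one candidate you do display, with $p\times p\times 1_I$ on the left and $e\times e\times I$ in the corner, is not the square whose base-change isomorphism is $i^*+_!(1\times p)^*\cong p_!$; likewise your final formulation $i^*\circ(+\times 1_I)_!\cong(p\times 1_I)_!\circ\mathrm{pr}^*$ is not of the standard form $v_1^*(u_2)_!\cong (u_1)_!v_2^*$, so it is not clear which comma categories you would even test. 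The heuristic that ``$+$ admits the section $i\times 1$'' is not a proof: a functor admitting a section does not make the relevant square homotopy exact, and since (as you correctly observe) $N(\underline{\bbN})\simeq S^1$ is not contractible, no soft contractibility argument is available. This is a genuine gap, not a presentational one.

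For comparison, the paper proves the auxiliary isomorphism $+_!(1\times p)^*\cong p^*p_!$, i.e.\ homotopy exactness of the commutative square
$$\begin{CD}
\underline{\bbN}\times\underline{\bbN} @>{1\times p}>> \underline{\bbN}\\
@V{+}VV @VV{p}V\\
\underline{\bbN} @>{p}>> e
\end{CD}$$
and then postcomposes with $i^*$ using $i^*p^*\cong\mathrm{Id}$. Putting $e$ in the lower-right corner means only a single category $(\bullet/\underline{\bbN}\times\underline{\bbN}/\bullet)_{\mathrm{id}}$ must be shown homotopy contractible. The paper computes it explicitly: its objects are pairs $(m,n)\in\bbN^2$, with a (unique) morphism $(m,n)\to(m',n')$ exactly when $a_1+m=m'$ and $a_1+a_2+n'=n$ for some $a_1,a_2\in\bbN$; via $(m,n)\mapsto(m+n,n)$ this is the full subcategory $L=\{(k,l):k\ge l\}$ of the poset $(\bbN,\le)\times(\bbN,\ge)$, which is a reflective subcategory of a contractible poset (the reflection sends $(m,n)$ to $(m,n)$ if $m\ge n$ and to $(n,n)$ otherwise), hence homotopy contractible. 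Some concrete computation of this kind is unavoidable here; without it the lemma is unproven.
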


\begin{proof}
Using general definition of evaluation at 1, our coherent endomorphism is $p^*\mathbbm{1}_{\bbD}$. Thus we have

\begin{eqnarray*}
\textrm{ev}_1(X)&=&i^*+_!(X\boxtimes_{\bbD} p^*\mathbbm{1})\\
&=&i^*+_!(1\times p)^*(X)
\end{eqnarray*}

So the task at hand is now simply to prove the isomorphism $i^*+_!(1\times p)^*\cong p_!$. We will prove a related isomorphism, namely $+_!(1\times p)^*\cong p^*p_!$. The required isomorphism now follows from this one since $i^*p^*\cong\textrm{Id}$, and so post-composing both sides of $+_!(1\times p)^*\cong p^*p_!$ gives precisely $i^*+_!(1\times p)^*\cong p_!$. 
Thus, we would like to show the following (commutative) square is homotopy exact.

\centerline{
\xymatrix{
\underline{\bbN}\times\underline{\bbN} \ar[r]^{1\times p} \ar[d]_{+} &
\underline{\bbN} \ar[d]^{p}\ar@{}[dl]|\swtrans \ar@{}[dl]<-1.5ex>|{\textrm{Id}}\\
\underline{\bbN} \ar[r]_{p}&
e }}

Here we check this directly via \cite[Theorem 3.8]{GrPoSh14a}. In our good fortune, because we have the terminal category in the lower right corner, we need only check that a single category is homotopy contractible. The category $(\bullet/\underline{\bbN}\times\underline{\bbN}/\bullet)_{\textrm{id}}$ as stated in the theorem, where both objects $\bullet$ are the sole objects in the two copies of $\underline{\bbN}$, has objects triples $$(m\in\bbN=\textrm{Hom}_{\underline{\bbN}}(\bullet,\bullet),\bullet\in\underline{\bbN}\times\underline{\bbN}, n\in\bbN=\textrm{Hom}_{\underline{\bbN}}(\bullet,\bullet)),$$ which we view as merely a pair of natural numbers $(m,n)$. 

The morphisms in this category are morphisms $(a_1,a_2)\in\underline{\bbN}\times\underline{\bbN}$ such that $(1\times p)(a_1,a_2)+m=m'$, and $+(a_1,a_2)+n'=n$, \ie we have a morphism $(a_1,a_2):(m,n)\rightarrow (m',n')$ if $a_1+m=m'$ and $a_1+a_2+n'=n$. Between any two objects of this category $(\bullet/\underline{\bbN}\times\underline{\bbN}/\bullet)_{\textrm{id}}$, there is at most only a single morphism. In particular, there is a morphism $(m,n)\rightarrow (m',n')$, if $m\leq m'$ and $n-n'\geq m'-m$, and in particular we must have $m\leq m'$ and $n\geq n'$. 

Thus, we may view $(\bullet/\underline{\bbN}\times\underline{\bbN}/\bullet)_{\textrm{id}}$ as a subcategory of $(\bbN,\leq)\times (\bbN,\geq)$ containing all objects but not all morphisms, where we have a morphism $(m,n)\rightarrow (m',n')$ if and only if $m\leq m'$ and $n\geq n'$, and $m+n\leq m'+n'$. However, we may view this as a subcategory of $(\bbN,\leq)\times (\bbN,\geq)$ by taking $(m,n)\mapsto (m+n,n)$. Being a fully faithful functor, it is an equivalence onto its image, and this second category is a full subcategory of $(\bbN,\leq)\times (\bbN,\geq)$ consisting of objects $(k,l)$ with $k\geq l$. Call this subcategory $L\subset (\bbN,\leq)\times (\bbN,\geq)$. 

There is an adjunction connecting $L$ and $(\bbN,\leq)\times (\bbN,\geq)$. The right adjoint is the inclusion, and the left adjoint takes $(m,n)\in (\bbN,\leq)\times (\bbN,\geq)$ to $(m,n)\in L$ if $m\geq n$ and $(n,n)\in L$ if $m<n$. Then $(\bbN,\leq)\times (\bbN,\geq)$ is a product of two homotopy contractible categories, since $(\bbN,\leq)$ has an initial object while $(\bbN,\geq)$ has a final object, and hence homotopy contractible. 

$L$ is then connected via adjunction to the terminal category, hence it is homotopy contractible. Therefore, our commutative square is homotopy exact and the two definitions of evaluation at 1 coincide.
\end{proof}

\begin{lemma}
Let $Y=(\mathbbm{1},\alpha)$ be a coherent endomorphism in $\bbD(\underline{\bbN})$ and let $M,N\in\bbD(e)$. Then $(M\boxtimes_{\bbD} Y)\boxtimes_{\bbA^1_{\bbD}} (N\boxtimes_{\bbD} Y)\cong (M\boxtimes_{\bbD} N)\boxtimes_{\bbD} Y$. 
\end{lemma}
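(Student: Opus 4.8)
The plan is to unwind everything in terms of the $\bbD$-external product $\boxtimes_{\bbD}$ and the functor $+$, and then reduce the claimed isomorphism to a comparison of two composite functors built out of products of copies of $\underline{\bbN}$, which I can check to be homotopy exact. First I would expand the left-hand side. Writing $M, N, Y$ as living in $\bbD(e), \bbD(e), \bbD(\underline{\bbN})$ respectively, the objects $M\boxtimes_{\bbD} Y$ and $N\boxtimes_{\bbD} Y$ live in $\bbD(\underline{\bbN})$ (after identifying $e\times\underline{\bbN}\cong\underline{\bbN}$), and each is just $\pi^*$ of $M$ (resp.\ $N$) tensored with $Y$ along the $\underline{\bbN}$-coordinate. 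Then
$$
(M\boxtimes_{\bbD} Y)\boxtimes_{\bbA^1_{\bbD}} (N\boxtimes_{\bbD} Y)
= +_!\bigl((M\boxtimes_{\bbD} Y)\boxtimes_{\bbD}(N\boxtimes_{\bbD} Y)\bigr),
$$
and the inner fourfold $\boxtimes_{\bbD}$ lands in $\bbD(\underline{\bbN}\times\underline{\bbN})$ (one copy of $\underline{\bbN}$ for each factor $Y$). Using associativity and symmetry of $\boxtimes_{\bbD}$, I would rewrite $(M\boxtimes_{\bbD} Y)\boxtimes_{\bbD}(N\boxtimes_{\bbD} Y)$ as $\bigl((M\boxtimes_{\bbD} N)\boxtimes_{\bbD}(Y\boxtimes_{\bbD} Y)\bigr)$ up to the relevant permutation of $\underline{\bbN}$-factors, so that the left-hand side becomes $+_!\,\sigma^*\bigl((M\boxtimes_{\bbD} N)\boxtimes_{\bbD} Y\boxtimes_{\bbD} Y\bigr)$ for an appropriate shuffle isomorphism $\sigma$ of $\underline{\bbN}\times\underline{\bbN}$.

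Next I would observe that $Y\boxtimes_{\bbD} Y \in \bbD(\underline{\bbN}\times\underline{\bbN})$, and that applying $+_!$ to it (along the two $\underline{\bbN}$-factors of $Y\boxtimes_{\bbD} Y$) is exactly $Y\boxtimes_{\bbA^1_{\bbD}} Y$ in $\bbD(\underline{\bbN})$. Combining with the cocontinuity of $\boxtimes_{\bbD}$ in the $\bbD$-variable (Definition of monoidal derivator, applied to the functor $+\colon\underline{\bbN}\times\underline{\bbN}\to\underline{\bbN}$ in the last two slots), pushing $+_!$ inside gives
$$
(M\boxtimes_{\bbD} Y)\boxtimes_{\bbA^1_{\bbD}} (N\boxtimes_{\bbD} Y)
\cong (M\boxtimes_{\bbD} N)\boxtimes_{\bbD} (Y\boxtimes_{\bbA^1_{\bbD}} Y).
$$
So the whole statement reduces to the single claim $Y\boxtimes_{\bbA^1_{\bbD}} Y \cong Y$, i.e.\ that a coherent $\alpha$-endomorphism is idempotent for the $\bbA^1_{\bbD}$-external product. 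This should follow because $Y=(\mathbbm{1},\alpha)$ has underlying object the monoidal unit, and the $\bbA^1_{\bbD}$-tensor is a Day-convolution-type product: $Y\boxtimes_{\bbA^1_{\bbD}} Y = +_!(Y\boxtimes_{\bbD} Y)$, and $Y\boxtimes_{\bbD} Y$ over $\underline{\bbN}\times\underline{\bbN}$ has underlying diagram $(\mathbbm{1},\alpha,\alpha)$; pushing forward along $+$ collapses the two $\alpha$'s so that the result again has underlying diagram $(\mathbbm{1},\alpha)$. To make this precise I would compare $+_!(Y\boxtimes_{\bbD} Y)$ with $Y$ itself via the unit-section trick already used in Proposition \ref{A1monoid} (using that $+\circ(i\times 1)\cong \mathrm{id}$ and $+\circ(1\times i)\cong\mathrm{id}$): writing $Y\boxtimes_{\bbD} Y\cong (1\times i)_!(\text{something})\boxtimes\dots$ will not quite work since $Y$ is not of the form $i_!(-)$, so instead I would use the identity $Y \cong i^*{}? $ — more directly, I expect the cleanest route is to check that the square expressing $+_!(Y\boxtimes_{\bbD} Y)\cong Y$ amounts to homotopy exactness of a square of categories among products of $\underline{\bbN}$, verified via \cite[Theorem 3.8]{GrPoSh14a} exactly as in the preceding lemma (the relevant slice categories will again be homotopy contractible after an adjunction argument).

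The main obstacle I anticipate is the last reduction — establishing $Y\boxtimes_{\bbA^1_{\bbD}} Y\cong Y$ canonically rather than just on underlying diagrams, since $\bbD$ need only be a derivator (so (Der2) controls isomorphisms but coherent data must be produced honestly). The bookkeeping of which shuffle permutation $\sigma$ of the $\underline{\bbN}$-factors is needed, and checking it is compatible with the $+_!$'s, is the fiddly part; I would handle it by drawing the commuting diagram of the finitely many functors $e\times e\times\underline{\bbN}\times\underline{\bbN}\to\cdots\to\underline{\bbN}$ involved and invoking functoriality of Kan extensions plus the symmetry/associativity constraints of the symmetric monoidal derivator $\bbD$, so that no genuinely new homotopy-exactness computation beyond the one in the previous lemma is required. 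If a direct argument for $Y\boxtimes_{\bbA^1_{\bbD}} Y\cong Y$ proves awkward, the fallback is to note $Y = \mathbbm{1}_{\bbD}\boxtimes_{\bbD} Y$ and apply the general associativity of the mixed internal/external product together with $\mathbbm{1}_{\bbA^1_{\bbD}} = i_!\mathbbm{1}_{\bbD}$, reducing to Proposition \ref{A1monoid}.
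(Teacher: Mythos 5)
Your opening reduction is sound and runs parallel to the paper's: expanding $\boxtimes_{\bbA^1_{\bbD}}$ as $+_!(-\boxtimes_{\bbD}-)$, shuffling the $\underline{\bbN}$-factors by symmetry, and using cocontinuity of $\boxtimes_{\bbD}$ to pull $+_!$ onto the $Y$-factors correctly reduces the lemma to the single claim $Y\boxtimes_{\bbA^1_{\bbD}}Y\cong Y$. (The paper packages the same endpoint as the isomorphism $+_!+^*\cong\textrm{Id}$, and disposes of that by noting the corresponding square is homotopy exact for model categories and invoking \cite[Theorem 3.16]{GrPoSh14a}, rather than by a slice-category computation.) The genuine gap is in how you propose to finish. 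The statement $+_!(Y\boxtimes_{\bbD}Y)\cong Y$ is \emph{not} equivalent to the homotopy exactness of a square of categories built from copies of $\underline{\bbN}$: it is false for a general $Y\in\bbD(\underline{\bbN})$ and genuinely uses the hypothesis $i^*Y\cong\mathbbm{1}$. Homotopy exactness only buys you the object-independent isomorphism $+_!+^*\cong\textrm{Id}$; to apply it you must first exhibit a coherent isomorphism $Y\boxtimes_{\bbD}Y\cong+^*Y$ in $\bbD(\underline{\bbN}\times\underline{\bbN})$. The two sides agree on underlying diagrams (both are $(\mathbbm{1},\alpha,\alpha)$ up to unitors), but, as you yourself observe, (Der2) only \emph{detects} isomorphisms along a given map — it does not manufacture the map — so ``verify a square is homotopy exact as in the preceding lemma'' cannot close this step.

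Your fallback also fails: the monoidal unit of $(\bbA^1_{\bbD},\boxtimes_{\bbA^1_{\bbD}})$ is $i_!\mathbbm{1}_{\bbD}$, whose underlying diagram is the ``free'' object (the analogue of $R[t]$ as a module over itself), not the coherent endomorphism $(\mathbbm{1},\alpha)$; so the unit computation of Proposition \ref{A1monoid} gives $+_!(i_!\mathbbm{1}_{\bbD}\boxtimes_{\bbD}Y)\cong Y$ and says nothing about $+_!(Y\boxtimes_{\bbD}Y)$. What is actually needed is a natural comparison map realizing $Y\boxtimes_{\bbD}Y\cong+^*Y$ (equivalently $(M\boxtimes_{\bbD}Y)\boxtimes_{\bbD}(N\boxtimes_{\bbD}Y)\cong+^*((M\boxtimes_{\bbD}N)\boxtimes_{\bbD}Y)$), after which $+_!+^*\cong\textrm{Id}$ finishes the proof; one candidate is the map $Y\cong\mathbbm{1}_{\bbA^1_{\bbD}}\boxtimes_{\bbA^1_{\bbD}}Y\to Y\boxtimes_{\bbA^1_{\bbD}}Y$ induced by the counit $i_!i^*Y\to Y$ (which uses $i^*Y\cong\mathbbm{1}$), checked to be invertible at the single object of $\underline{\bbN}$. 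In fairness, the paper's own proof is equally silent on this identification, but since your writeup explicitly stakes the argument on reducing it to a homotopy-exactness check, the missing map is a real gap in the proposal as written.
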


\begin{proof}
This can be proven by showing that $+_! +^*\cong \textrm{Id}$, or equivalently that the square

\centerline{
\xymatrix{
\underline{\bbN}\times\underline{\bbN}\ar[d]^{1} \ar[r]^{1}& \underline{\bbN}\times\underline{\bbN}\ar[d]^{+}\ar@{}[dl]|\swtrans \ar@{}[dl]<-1.5ex>|{\textrm{Id}}\\
\underline{\bbN}\times\underline{\bbN}\ar[r]^{+}&\underline{\bbN}}} is homotopy exact. However, this statement is certainly true if $\bbD$ is a model category, so by \cite[Theorem 3.16]{GrPoSh14a}, it holds for all derivators $\bbD$.
\end{proof}

\begin{proposition}
The general ``evaluation at $\alpha$'' morphism is a strong monoidal morphism of derivators for any coherent endomorphism $(\mathbbm{1},\alpha)\in\bbD^{\underline{\bbN}}(e)$. 
\end{proposition}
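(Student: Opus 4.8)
The plan is to check the two conditions for $\textrm{ev}_\alpha$ to be a strong monoidal morphism of derivators: that it sends the monoidal unit of $\bbA^1_{\bbD}$ to the monoidal unit of $\bbD$, and that it sends $\bbA^1_{\bbD}$-external products to $\bbD$-external products, naturally in both variables. Write $Y' := (\mathbbm{1}_{\bbD},\alpha)\in\bbA^1_{\bbD}(e)$ for the coherent $\alpha$-endomorphism. By Definition \ref{ev alpha} and the discussion preceding it, $\textrm{ev}_\alpha(X) = i^*\bigl(X\boxtimes_{\bbA^1_{\bbD}} Y'\bigr) = i^*\,{+}_!\bigl(X\boxtimes_{\bbD} Y'\bigr)$. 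The moral content is that tensoring with $Y'$ and then forgetting the $\underline{\bbN}$-direction via $i^*$ is the derivator incarnation of extension of scalars along $R[t]\to R[t]/(t-r)\cong R$, which is monoidal essentially for free; the work lies in producing this isomorphism inside the derivator.

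First, the unit. By Proposition \ref{A1monoid} the unit of $\bbA^1_{\bbD}$ is $i_!\mathbbm{1}_{\bbD}$, and the last computation in its proof gives $i_!\mathbbm{1}_{\bbD}\boxtimes_{\bbA^1_{\bbD}} Y'\cong Y'$; since $Y'$ is a coherent endomorphism of the unit, $i^*Y'\cong\mathbbm{1}_{\bbD}$ by definition, so $\textrm{ev}_\alpha(\mathbbm{1}_{\bbA^1_{\bbD}}) = i^*\bigl(i_!\mathbbm{1}_{\bbD}\boxtimes_{\bbA^1_{\bbD}} Y'\bigr)\cong i^*Y'\cong\mathbbm{1}_{\bbD}$. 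I will use two further small facts. Taking $M=N=\mathbbm{1}_{\bbD}$ in the preceding Lemma, together with the unit isomorphism $\mathbbm{1}_{\bbD}\boxtimes_{\bbD} Y'\cong Y'$, shows that $Y'$ is idempotent: $Y'\boxtimes_{\bbA^1_{\bbD}} Y'\cong Y'$. And for any $W$, since restriction along $e\to\underline{\bbN}$ commutes with the external product and sends $Y'$ to $i^*Y'\cong\mathbbm{1}_{\bbD}$, one has $i^*\bigl(W\boxtimes_{\bbD} Y'\bigr)\cong W\boxtimes_{\bbD}\mathbbm{1}_{\bbD}\cong W$.

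Now fix $X\in\bbA^1_{\bbD}(I)$, $Y\in\bbA^1_{\bbD}(J)$. Using idempotency of $Y'$ and the associativity and symmetry of the symmetric monoidal derivator $\bbA^1_{\bbD}$,
\[
(X\boxtimes_{\bbA^1_{\bbD}} Y)\boxtimes_{\bbA^1_{\bbD}} Y'\;\cong\;(X\boxtimes_{\bbA^1_{\bbD}} Y)\boxtimes_{\bbA^1_{\bbD}}(Y'\boxtimes_{\bbA^1_{\bbD}} Y')\;\cong\;(X\boxtimes_{\bbA^1_{\bbD}} Y')\boxtimes_{\bbA^1_{\bbD}}(Y\boxtimes_{\bbA^1_{\bbD}} Y').
\]
The one substantial ingredient is the isomorphism
\[
(\star)\qquad X\boxtimes_{\bbA^1_{\bbD}} Y'\;\cong\;\bigl(\textrm{ev}_\alpha X\bigr)\boxtimes_{\bbD} Y',
\]
natural in $X$: an object of the form $X\boxtimes_{\bbA^1_{\bbD}} Y'$ is recovered from its value $\textrm{ev}_\alpha X = i^*(X\boxtimes_{\bbA^1_{\bbD}} Y')$ by re-attaching $Y'$ in the $\underline{\bbN}$-direction. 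Granting $(\star)$, apply it to each of $X\boxtimes_{\bbA^1_{\bbD}} Y'$ and $Y\boxtimes_{\bbA^1_{\bbD}} Y'$, then apply the preceding Lemma in its evident version over $I$ and $J$, with $M=\textrm{ev}_\alpha X$ and $N=\textrm{ev}_\alpha Y$:
\[
(X\boxtimes_{\bbA^1_{\bbD}} Y)\boxtimes_{\bbA^1_{\bbD}} Y'\;\cong\;\bigl((\textrm{ev}_\alpha X)\boxtimes_{\bbD} Y'\bigr)\boxtimes_{\bbA^1_{\bbD}}\bigl((\textrm{ev}_\alpha Y)\boxtimes_{\bbD} Y'\bigr)\;\cong\;\bigl((\textrm{ev}_\alpha X)\boxtimes_{\bbD}(\textrm{ev}_\alpha Y)\bigr)\boxtimes_{\bbD} Y'.
\]
Applying $i^*$ and the second small fact above yields $\textrm{ev}_\alpha(X\boxtimes_{\bbA^1_{\bbD}} Y)\cong(\textrm{ev}_\alpha X)\boxtimes_{\bbD}(\textrm{ev}_\alpha Y)$. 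Naturality is automatic, every step being a canonical isomorphism; the required coherence diagrams commute for the same reason.

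The hard part is $(\star)$, and this is where I expect the real obstacle. It is not true for an arbitrary object in place of $X\boxtimes_{\bbA^1_{\bbD}} Y'$, so the proof must use that the object is fixed by the idempotent functor $-\boxtimes_{\bbA^1_{\bbD}} Y'$; in ring language $(\star)$ is the statement that $t-r$ acts as zero on $X\otimes^{\mathbf{L}}_{R[t]} R[t]/(t-r)$, witnessed by an explicit natural nullhomotopy on the two-term resolution $\bigl[R[t]\xrightarrow{\,t-r\,} R[t]\bigr]$. I plan to prove it by unwinding $X\boxtimes_{\bbA^1_{\bbD}} Y'={+}_!(X\boxtimes_{\bbD} Y')$ and $\textrm{ev}_\alpha X=i^*(X\boxtimes_{\bbA^1_{\bbD}} Y')$ into a composite of homotopy Kan extensions, pullbacks, external products, and the fixed object $Y'$ with $i^*Y'\cong\mathbbm{1}_{\bbD}$, thereby reducing $(\star)$ to the homotopy-exactness of an explicit square of small categories built out of $+$, $i$ and $p$ — to be checked by \cite[Theorem 3.8]{GrPoSh14a} via a homotopy-contractibility computation of the same flavour as the one used to show the two definitions of evaluation at $1$ agree — or, paralleling the proof of the preceding Lemma, by the reduction to monoidal model categories of \cite[Theorem 3.16]{GrPoSh14a}, where $(\star)$ is exactly the classical statement above. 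The family versions of $(\star)$ and of the preceding Lemma needed here follow identically, the relevant squares remaining homotopy exact after shifting $\bbD$. The only persistent nuisance throughout is keeping straight which of the several $\underline{\bbN}$-factors each of $+_!$, $i^*$ and the two external products acts on.
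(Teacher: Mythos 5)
Your proposal follows the same skeleton as the paper's proof: deduce idempotency of $Y'=(\mathbbm{1},\alpha)$ under $\boxtimes_{\bbA^1_{\bbD}}$ from the preceding Lemma with $M=N=\mathbbm{1}$, rearrange by associativity and symmetry, and then use the preceding Lemma once more to split $i^*$ across the product; you are more careful than the paper in also checking unit preservation and in isolating, as $(\star)$, the hypothesis that the final application of that Lemma actually requires. The problem is that the step both you and the paper lean on fails. For $\bbD=\bbD_R$ the product $\boxtimes_{\bbA^1_{\bbD}}$ on $\bbA^1_{\bbD}(e)$ is the derived tensor $\otimes^{\mathbf{L}}_{R[t]}$, and for $\alpha=0$ the object $Y'$ is $R[t]/(t)$; then $Y'\boxtimes_{\bbA^1_{\bbD}}Y'\cong R\oplus R[1]$ rather than $Y'$, because $t$ is a nonzerodivisor acting as zero on $Y'$. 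The same example refutes the preceding Lemma itself (take $M=N=R$) and the isomorphism $+_!+^*\cong\textrm{Id}$ on which its proof rests: the relevant square is not homotopy exact, since the slice category attached to the generator $1\in\bbN$ is disconnected. Consequently the first isomorphism in your displayed rearrangement, replacing $Y'$ by $Y'\boxtimes_{\bbA^1_{\bbD}}Y'$, is not an isomorphism --- and this is exactly the point where the paper's own computation breaks as well. The proposition itself is true (in the ring case it is the monoidality of derived extension of scalars along $R[t]\to R[t]/(t-r)$), but a correct proof cannot go through idempotency of $Y'$: the classical argument uses the coequalizer/bar description of $\otimes_B$ and never needs $B\otimes^{\mathbf{L}}_AB\cong B$, which is false here.

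Separately, $(\star)$ is left as a plan, and neither proposed route would close it. A homotopy-exactness computation yields isomorphisms between composites of restrictions and Kan extensions, and cannot see the particular object $(\mathbbm{1},\alpha)$, which for general $\alpha$ is not obtained from $\mathbbm{1}$ by Kan extensions along functors built out of $+$, $i$ and $p$; likewise \cite[Theorem 3.16]{GrPoSh14a} transfers homotopy exactness of squares between derivators, not isomorphisms of composites evaluated at a fixed object. $(\star)$ is indeed the right coherent statement to aim for --- that $t$ acts as $r$ on $X\otimes^{\mathbf{L}}_{R[t]}R[t]/(t-r)$ --- but establishing it requires an argument that genuinely uses the object $Y'$, not only the shapes $\underline{\bbN}$, $\underline{\bbN}\times\underline{\bbN}$ and the functors between them.
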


\begin{proof}
Again we simply restrict to examining the evaluation at $\alpha$ morphism on $\bbA^1_{\bbD}(e)$. Consider objects $X$ and $Y$ in $\bbD(\underline{\bbN})$-we wish to show $$\textrm{ev}_{\alpha}(X)\boxtimes_{\bbD}\textrm{ev}_{\alpha}(Y)\cong\textrm{ev}_{\alpha}(X\boxtimes_{\bbA^1_{\bbD}} Y).$$ 

Let us use the formulation of $\textrm{ev}_{\alpha}(-)=i^*+_!(-\boxtimes_{\bbD} (1,\alpha))$. Then we have

\begin{eqnarray*}
\textrm{ev}_{\alpha}(X\boxtimes_{\bbA^1_{\bbD}}Y)&=&i^*+_!((X\boxtimes_{\bbA^1_{\bbD}}Y)\boxtimes_{\bbD} (\mathbbm{1},\alpha))\\
&=&i^*((X\boxtimes_{\bbA^1_{\bbD}}Y)\boxtimes_{\bbA^1_{\bbD}}(\mathbbm{1},\alpha))\\
&\cong&i^*((X\boxtimes_{\bbA^1_{\bbD}}Y)\boxtimes_{\bbA^1_{\bbD}}((\mathbbm{1},\alpha)\boxtimes_{\bbA^1_{\bbD}}(\mathbbm{1},\alpha)))\\
&\cong&i^*(X\boxtimes_{\bbA^1_{\bbD}}(\mathbbm{1},\alpha))\boxtimes_{\bbA^1_{\bbD}}(Y\boxtimes_{\bbA^1_{\bbD}}(\mathbbm{1},\alpha))\\
&\cong&i^*(X\boxtimes_{\bbA^1_{\bbD}}(\mathbbm{1},\alpha))\boxtimes_{\bbD} i^*(Y\boxtimes_{\bbA^1_{\bbD}}(\mathbbm{1},\alpha))\\
&=&(i^*+_!)(X\boxtimes_{\bbD}(\mathbbm{1},\alpha))\boxtimes_{\bbD} (i^*+_!)(Y\boxtimes_{\bbD} (\mathbbm{1},\alpha))\\
&=&\textrm{ev}_{\alpha}(X)\boxtimes_{\bbD}\textrm{ev}_{\alpha}(Y)
\end{eqnarray*}

Here the last isomorphism comes from the preceding lemma.
\end{proof}

Therefore, the monoidal structures on $\bbA^1_{\bbD}$ and $\bbD$ are compatible, in that the structure and evaluation morphisms are all strong monoidal.

\begin{proposition}
The evaluation at $\alpha$ morphisms are cocontinuous.
\end{proposition}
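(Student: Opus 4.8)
The plan is to realize $\textrm{ev}_{\alpha}$ as a composite of morphisms of derivators, each of which is already cocontinuous, and then invoke the fact that cocontinuity is stable under composition. Fix the coherent endomorphism $W=(\mathbbm{1}_{\bbD},\alpha)\in\bbD(\underline{\bbN})=\bbA^1_{\bbD}(e)$, so that by Definition~\ref{ev alpha} together with Proposition~\ref{A1monoid} we have $\textrm{ev}_{\alpha}(-)=i^*(-\boxtimes_{\bbA^1_{\bbD}}W)=i^*+_!(-\boxtimes_{\bbD}W)$. I would factor $\textrm{ev}_{\alpha}\colon\bbA^1_{\bbD}\to\bbD$, up to canonical isomorphism, as
$$\bbA^1_{\bbD}\ \xrightarrow{\ G\ }\ \bbD^{\underline{\bbN}\times\underline{\bbN}}\ \xrightarrow{\ +_!\ }\ \bbA^1_{\bbD}\ \xrightarrow{\ i^*\ }\ \bbD,$$
where $G$ sends $Y\in\bbD(\underline{\bbN}\times I)$ to $Y\boxtimes_{\bbD}W\in\bbD(\underline{\bbN}\times I\times\underline{\bbN})$, identified with an object of $\bbD^{\underline{\bbN}\times\underline{\bbN}}(I)=\bbD(\underline{\bbN}\times\underline{\bbN}\times I)$ by reordering the two $\underline{\bbN}$-factors; $+_!$ is the homotopy left Kan extension morphism along $+\colon\underline{\bbN}\times\underline{\bbN}\to\underline{\bbN}$; and $i^*$ is restriction along $i\colon e\to\underline{\bbN}$. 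That $G$ is a morphism of derivators is immediate from the pseudonaturality of the external product $\boxtimes_{\bbD}$, which is part of $\bbD$ being a monoidal prederivator.

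Two of the three factors are cocontinuous for formal reasons recalled in Section~2: for any functor $u$ the morphism $u_!\colon\bbD^A\to\bbD^B$ is a cocontinuous strong morphism of derivators and $u^*\colon\bbD^B\to\bbD^A$ is a strict cocontinuous morphism; taking $u=+$ gives cocontinuity of $+_!$ and taking $u=i$ gives cocontinuity of $i^*$. Here one should remember that homotopy left Kan extension along a functor $u\colon A\to B$ in a shifted derivator $\bbD^L$ is, by definition, the one along $1_L\times u$ in $\bbD$, so that the coherence transformations $\gamma_{u_!}$ of the composite are obtained by pasting those of the factors.

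The only factor requiring genuine input is $G$, and this is exactly where the monoidal-derivator hypothesis on $\bbD$ enters. Unwinding the shifts, for a functor $u\colon A\to B$ the comparison $\gamma^{G}_{u_!}$ becomes the canonical map
$$(1_{\underline{\bbN}}\times u\times 1_{\underline{\bbN}})_!\,(Y\boxtimes_{\bbD}W)\ \longrightarrow\ \bigl((1_{\underline{\bbN}}\times u)_!\,Y\bigr)\boxtimes_{\bbD}W$$
in $\bbD$; up to the reordering isomorphism this is precisely the cocontinuity of the external product $\boxtimes_{\bbD}$ in its first variable, applied to the functor $1_{\underline{\bbN}}\times u\colon\underline{\bbN}\times A\to\underline{\bbN}\times B$ and the fixed object $W$, which holds because $\bbD$ is a monoidal derivator. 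By the remark following the definition of monoidal derivator it would even suffice to treat $u$ a projection, but we get the general case for free. Since a composite of cocontinuous morphisms of derivators is cocontinuous — the $\gamma_{u_!}$ of a composite being the pasting of the $\gamma_{u_!}$'s of the factors, hence an isomorphism when each factor's is — we conclude that $\textrm{ev}_{\alpha}=i^*\circ+_!\circ G$ is cocontinuous, and the argument is uniform in $\alpha$, so all evaluation-at-$\alpha$ morphisms are cocontinuous at once.

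I do not expect a conceptual obstacle: every ingredient — cocontinuity of $u_!$ and $u^*$ in shifted derivators, cocontinuity of $\boxtimes_{\bbD}$, and stability of cocontinuity under composition — is either cited in Section~2 or a formal consequence of the definitions. The only real care needed is bookkeeping, namely tracking the $\underline{\bbN}$-coordinates and the reordering isomorphisms when passing between $\bbD$, $\bbA^1_{\bbD}$ and $\bbD^{\underline{\bbN}\times\underline{\bbN}}$, and verifying that the chosen coherence data of $\textrm{ev}_{\alpha}$ really does coincide with that of the composite $i^*\circ+_!\circ G$.
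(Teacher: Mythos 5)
Your proof is correct and follows essentially the same route as the paper: both decompose $\textrm{ev}_{\alpha}$ as $i^*\circ +_!\circ(-\boxtimes_{\bbD}(\mathbbm{1},\alpha))$ and check that each factor commutes with $f_!$, using cocontinuity of the external product for the first factor and the standard cocontinuity of $u_!$ and $u^*$ as morphisms of shifted derivators for the other two. Your write-up is somewhat more explicit about the coherence data and the reordering of the $\underline{\bbN}$-coordinates, but the underlying argument is the one in the paper.
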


\begin{proof}
To be precise, we would like to show that for any functor $f\colon A\rightarrow B$, the diagram below commutes.

\centerline{
\xymatrix{
\bbA^1_{\bbD}(A) \ar[r]^{f_!} \ar[d]_{\textrm{ev}_{\alpha}} &
\bbA^1_{\bbD}(B) \ar[d]^{\textrm{ev}_{\alpha}}\ar@{}[dl]|\swtrans \ar@{}[dl]<-1.5ex>|{\cong}\\
\bbD(A) \ar[r]_{f_!}&
\bbD(B) }}

Remember that $\textrm{ev}_{\alpha}(X)$ for any $X\in\bbA^1_{\bbD}(I)$ is the composition $i^*+_!(X\boxtimes_{\bbD} (\mathbbm{1},\alpha))$. Taking an external product with $(\mathbbm{1},\alpha)$ commutes with $f_!$ since external products are cocontinuous. The homotopy left Kan extensions $+_!$ and $f_!$ commute as they occur in different variables, and similarly $i^*$ and $f_!$. 
\end{proof}

This last result will be of importance when we discuss the ``universal property'' of the affine line.


\section{Closed monoidal derivators}

We call a monoidal derivator $\bbD$ closed if the tensor product $$\otimes\colon\bbD\times\bbD\rightarrow\bbD$$ is a two-variable left adjoint. For information on general two-variable left adjoints in this context, we defer to \cite[Section 8]{GrPoSh14b}. Our main goal in this section is to prove that if $\bbD$ is a closed symmetric monoidal derivator, then so is $\bbA^1_{\bbD}$ with the $\bbA^1$-monoidal structure as defined in Section 4. 

\begin{theorem}
If $(\bbD,\boxtimes)$ is a closed monoidal derivator, then so is $(\bbA^1_{\bbD},\boxtimes_{\bbA^1_{\bbD}})$. 
\end{theorem}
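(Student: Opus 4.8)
The plan is to reduce the closedness of $(\bbA^1_{\bbD},\boxtimes_{\bbA^1_{\bbD}})$ to the closedness of $(\bbD,\boxtimes)$ by exploiting the explicit formula $X\boxtimes_{\bbA^1_{\bbD}} Y=+_!(X\boxtimes_{\bbD} Y)$. The key observation is that $+\colon\underline{\bbN}\times\underline{\bbN}\to\underline{\bbN}$ is a functor, so the morphism of derivators $+_!$ has a right adjoint, namely $+^*\colon\bbD^{\underline{\bbN}}\to\bbD^{\underline{\bbN}\times\underline{\bbN}}=(\bbA^1_{\bbD})^{\underline{\bbN}}$, which is continuous and cocontinuous. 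Thus the internal-hom functor on $\bbA^1_{\bbD}$ should be built by composing $+^*$ with the internal hom of $\bbD$ applied in the appropriate variable, then applying a further right adjoint (a restriction/right Kan extension along a diagonal-type functor) to land back in $\bbD^{\underline{\bbN}}$. Concretely, I expect $\underline{\hom}_{\bbA^1_{\bbD}}(X,Z)$ to be given, at the level of external homs, by something like $(\text{diagonal})^* \underline{\hom}_{\bbD}(X, +^* Z)$ or a right Kan extension thereof, exactly mirroring the ring computation $\underline{\Hom}_{R[t]}(M,N)$ as the $t_1=t_2$-part of $\underline{\Hom}_{R[t_1,t_2]}(\cdots)$.

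First I would recall the precise definition of a two-variable adjunction in the derivator setting from \cite[Section 8]{GrPoSh14b}: a two-variable left adjoint $\otimes\colon\bbD\times\bbE\to\bbF$ is a morphism of derivators cocontinuous in each variable, together with the data of pointwise-defined partial right adjoints satisfying the usual triangle identities and compatibility with restrictions. Proposition \ref{A1monoid} already establishes that $\boxtimes_{\bbA^1_{\bbD}}$ is a product morphism cocontinuous in each variable; what remains is to produce the two partial right adjoints. Since $\boxtimes_{\bbA^1_{\bbD}}$ is symmetric (as $\bbD$ is and $+$ is symmetric), it suffices to produce one. I would then write $\boxtimes_{\bbA^1_{\bbD}}$ as the composite of cocontinuous morphisms $\bbD^{\underline{\bbN}}\times\bbD^{\underline{\bbN}}\xrightarrow{\boxtimes_{\bbD}}\bbD^{\underline{\bbN}\times\underline{\bbN}}\xrightarrow{+_!}\bbD^{\underline{\bbN}}$ and build the right adjoint by composing the right adjoints in reverse: $+^*$ followed by the partial internal hom of $\boxtimes_{\bbD}$ (which exists by hypothesis on $\bbD$). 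One subtlety is that $\boxtimes_{\bbD}$ is an \emph{external} product landing in $\bbD$ on a product category, so its two-variable right adjoint is the external internal-hom of $\bbD$; I would need the analogue of \cite[Theorem 3.16]{GrPoSh14a} or a direct check that having a two-variable right adjoint is preserved under this kind of composition, which is a formal $2$-categorical statement about adjunctions of derivator morphisms.

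The key steps, in order, are: (1) unwind the definition of closed (two-variable left adjoint) for derivators and reduce, by symmetry, to constructing a single partial right adjoint; (2) factor $\boxtimes_{\bbA^1_{\bbD}}=+_!\circ\boxtimes_{\bbD}$ and observe $+_!\dashv +^*$ with $+^*$ continuous (as the restriction along a functor); (3) use the hypothesis that $\bbD$ is closed to get the external internal-hom $\underline{\hom}_{\bbD}$ as the two-variable right adjoint of $\boxtimes_{\bbD}$; (4) compose to obtain the candidate internal hom for $\bbA^1_{\bbD}$ and verify the adjunction isomorphism $\Hom(X\boxtimes_{\bbA^1_{\bbD}}Y,Z)\cong\Hom(X,\underline{\hom}_{\bbA^1_{\bbD}}(Y,Z))$ by chaining the two adjunctions, naturally in all variables and compatibly with restriction functors (this is where one must be careful that the chosen natural isomorphisms $\gamma_u$ for the composite morphism are the right ones, but it is forced by the $2$-categorical bookkeeping); (5) note the whole construction is self-dual enough that the symmetric and closed structures are compatible. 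The main obstacle I anticipate is step (4)--(5): not the existence of the adjoint, which is essentially formal, but verifying the full \emph{coherence} data required by the definition in \cite[Section 8]{GrPoSh14b}, i.e. that the partial adjoints are genuinely pseudonatural in the category variable and that the two partial adjoints fit together into a single two-variable adjunction. I would handle this by invoking the general principle (again \cite[Theorem 3.16]{GrPoSh14a} or its two-variable refinement in \cite{GrPoSh14b}) that the property of being a two-variable left adjoint, like homotopy exactness, can be checked on represented derivators coming from model categories, where the statement is the classical fact that $R[t]\MMod$ is closed symmetric monoidal whenever $R\MMod$ is; this collapses the coherence verification to a known $1$-categorical computation and sidesteps the need to write out all the $2$-cells by hand.
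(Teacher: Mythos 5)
Your construction is exactly the one the paper uses: factor $\boxtimes_{\bbA^1_{\bbD}}$ as $(+\times 1)_!\circ\boxtimes_{\bbD}$, obtain the partial internal homs by composing $(+\times 1)^*$ with the partial right adjoints $\rhd_{[J]}$, $\lhd_{[I]}$ of $\bbD$, and establish the adjunction isomorphisms $\bbA^1_{\bbD}(I\times J)(X\boxtimes_{\bbA^1_{\bbD}}Y,Z)\cong\bbA^1_{\bbD}(I)(X,Y\rhd Z)\cong\bbA^1_{\bbD}(J)(Y,Z\lhd X)$ by chaining the $(+_!,+^*)$-adjunction with the two-variable adjunction of $\bbD$. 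Your steps (1)--(4) are the paper's proof almost verbatim.

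The one place you go wrong is the escape hatch you propose for step (5). The property ``$(\bbA^1_{\bbD},\boxtimes_{\bbA^1_{\bbD}})$ is a two-variable left adjoint'' is a statement about the \emph{fixed} derivator $\bbD$ you started with, which is only assumed to be a closed monoidal derivator and need not be the homotopy derivator of any model category. By contrast, \cite[Theorem 3.16]{GrPoSh14a} concerns homotopy exactness of a square, which is by definition a condition quantified over \emph{all} derivators, and that universal quantification is precisely what lets one reduce to represented or model-categorical cases. There is no analogous reduction for ``$\bbD$ is closed $\Rightarrow$ $\bbA^1_{\bbD}$ is closed,'' so appealing to the closedness of $R[t]\MMod$ proves nothing about a general $\bbD$. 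Fortunately you do not need it: the coherence you worry about amounts to checking that the canonical mates $u^*(Y\rhd_{[\bbA^1,J]}Z)\to (Y\rhd_{[\bbA^1,J]}(u\times 1)^*Z)$ (and dually for $\lhd$) are isomorphisms, and this follows in two lines from the corresponding known isomorphisms for $\rhd_{[\underline{\bbN}\times J]}$ in $\bbD$ together with the identity $(u\times 1_{\underline{\bbN}\times\underline{\bbN}})^*+^*=+^*(u\times 1_{\underline{\bbN}})^*$. That direct check is what the paper does, and you should do the same rather than invoke a model-category reduction that is not available here.
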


\begin{proof}
Recall that the external product associated to $\otimes_{\bbA^1}$ is $\boxtimes_{\bbA^1_{\bbD}}:\bbA^1_{\bbD}(I)\times\bbA^1_{\bbD}(J)\rightarrow\bbA^1_{\bbD}(I\times J)$ defined as

$$\begin{CD}
\bbD(\underline{\bbN}\times I)\times\bbD(\underline{\bbN}\times J)@>\boxtimes_{\bbD}>>\bbD(\underline{\bbN}\times I\times\underline{\bbN}\times J)@>(+\times 1)_!>>\bbD(\underline{\bbN}\times I\times J)
\end{CD}$$

To show this is a two-variable left adjoint, we see that

\begin{enumerate}
\item $\rhd_{[\bbA^1,J]}\colon\bbA^1_{\bbD}(J)^{op}\times\bbA^1_{\bbD}(I\times J)\rightarrow\bbA^1_{\bbD}(I)$ is given by the composition $$\rhd_{[\underline{\bbN},J]}\circ (\textrm{Id}\times (+\times 1_{I\times J})^*),$$ which we obtain by taking the right adjoints of $\boxtimes_{\bbD}$ and $(+\times 1)_!$.  
\item $\lhd_{[\bbA^1,I]}\colon\bbA^1_{\bbD}(I\times J)\times\bbA^1_{\bbD}(I)^{op}\rightarrow\bbA^1_{\bbD}(I)$ given by the composition $$\lhd_{[\underline{\bbN}\times I]}\circ ((+\times 1_{I\times J})^*\times\textrm{Id}),$$ again obtained by taking right adjoints of $\boxtimes_{\bbD}$ and $(+\times 1)_!$. 
\end{enumerate}

We should then have, for any $X\in\bbA^1_{\bbD}(I)$, $Y\in\bbA^1_{\bbD}(J)$, and $Z\in\bbA^1_{\bbD}(I\times J)$, natural isomorphisms

$$\bbA^1_{\bbD}(I\times J)(X\boxtimes_{\bbA^1_{\bbD}} Y,Z)\cong\bbA^1_{\bbD}(I)(X,Y\rhd_{[\bbA^1,J]} Z)\cong\bbA^1_{\bbD}(J)(Y,Z\lhd_{[\bbA^1,I]} X)$$

The first isomorphism above is given by the composition

\begin{eqnarray*}
\bbA^1_{\bbD}(I\times J)(X\boxtimes_{\bbA^1_{\bbD}} Y,Z)&\cong&\bbD(\underline{\bbN}\times I\times J)((+\times 1)_!(X\boxtimes_{\bbD} Y),Z)\\
&\cong& \bbD(\underline{\bbN}\times\underline{\bbN}\times I\times J)(X\boxtimes_{\bbD} Y,(+\times 1)^*Z)\\
&\cong &\bbD(\underline{\bbN}\times I)(X,Y\rhd_{[J]} (+\times 1)^*Z)\\
&\cong &\bbD(\underline{\bbN}\times I)(X,Y\rhd_{[\bbA^1,J]} Z)\\
&\cong &\bbA^1_{\bbD}(I)(X,Y\rhd_{[\bbA^1,J]} Z)
\end{eqnarray*}

Here each step uses either the $(+_!,+^*)$-adjunction or $\boxtimes_{\bbD}$ being an adjunction of two variables.

Meanwhile the isomorphism $\bbA^1_{\bbD}(I\times J)(X\boxtimes_{\bbA^1_{\bbD}} Y,Z)\cong\bbA^1_{\bbD}(J)(Y,Z\lhd_{[\bbA^1,I]} X)$ is similarly given by 

\begin{eqnarray*}
\bbA^1_{\bbD}(I\times J)(X\boxtimes_{\bbA^1_{\bbD}} Y,Z)&\cong&\bbD(\underline{\bbN}\times I\times J)((+\times 1)_!(X\boxtimes_{\bbD} Y),Z)\\
&\cong& \bbD(\underline{\bbN}\times\underline{\bbN}\times I\times J)(X\boxtimes_{\bbD} Y,(+\times 1)^*Z)\\
&\cong&\bbD(\underline{\bbN}\times J)(Y,(+\times 1)^*Z\lhd_{[J]} X)\\
&\cong&\bbD(\underline{\bbN}\times J)(Y,Z \lhd_{[\bbA^1,J]} X)\\
&\cong&\bbA^1_{\bbD}(J)(Y,Z\lhd_{[\bbA^1,I]} X)
\end{eqnarray*}

Here again each isomorphism is either induced from the adjunction $(+_!,+^*)$ or the two-variable adjunction for $\bbD$, and is therefore natural.

For the canonical mates, fix $Y\in\bbA^1_{\bbD}(J)$, $Z\in\bbA^1_{\bbD}(I\times J)$, and let $u\colon I'\rightarrow I$ be any functor. Then we have 

\begin{eqnarray*}
u^*(Y\rhd_{[\bbA^1,J]} Z)&\cong& u^*(Y\rhd_{[\underline{\bbN}\times J]} +^*Z)\\
&\cong&(Y\rhd_{[\underline{\bbN}\times J]} (u\times1_{\underline{\bbN}\times\underline{\bbN}})^*+^*Z)\\
&\cong&(Y\rhd_{[\underline{\bbN}\times J]}(+^*(u\times 1_{\underline{\bbN}})^*Z)\\
&\cong&(Y\rhd_{[\bbA^1,J]}(u\times 1_{\underline{\bbN}})^*Z)\\
&\cong&(Y\rhd_{[\bbA^1,J]}(u\times 1)^*Z)
\end{eqnarray*}

where the isomorphisms come from naturality and the known natural isomorphisms for $\rhd_{[J]}$. Similarly for the other mate we fix $X\in\bbA^1_{\bbD}(I)$ and $Z\in\bbA^1_{\bbD}(J)$, and a functor $v\colon J'\rightarrow J$. Then we have

\begin{eqnarray*}
v^*(Z\lhd_{[\bbA^1,J]} X)&\cong&v^*(+^*Z\lhd_{[\underline{\bbN}\times J]} X)\\
&\cong&((1\times v)^*+^*Z\lhd_{[\underline{\bbN}\times J]} X)\\
&\cong& (+^*(1\times v)^*Z\lhd_{[\underline{\bbN}\times J]} X)\\
&\cong& ((1\times v)^*Z\lhd_{[\bbA^1,J]} X)\\
\end{eqnarray*}

This completes the verification that $\otimes_{\bbA^1}\colon\bbA^1_{\bbD}\times\bbA^1_{\bbD}\rightarrow\bbA^1_{\bbD}$ is a two-variable left adjoint, hence $(\bbA^1_{\bbD},\boxtimes_{\bbA^1_{\bbD}})$ is closed if $(\bbD,\boxtimes)$ is a closed monoidal derivator. 
\end{proof}

If $(\bbD,\boxtimes,\mathbbm{1})$ is a monoidal derivator, it comes equipped with a so-called ``projection morphism'': for objects $A\in\bbD(I)$, $B\in\bbD(J)$, and functor $u\colon I\rightarrow J$, we have a morphism $u_!(A\otimes_{\bbD(I)} u^*B)\rightarrow u_!A\otimes_{\bbD(J)} B$ defined to be the adjoint of the following morphism under the $u_!\dashv u^*$ adjunction.

$$\begin{CD}
A\otimes_{\bbD(I)} u^* B@>>>u^*u_!A_{\bbD(I)}\otimes u^*B\cong u^*(u_!A\otimes_{\bbD(J)} B)\\
\end{CD}$$

The projection morphism is moreover an isomorphism if $(\bbD,\boxtimes,\mathbbm{1})$ is a closed monoidal derivator, see \cite[Lemma 2]{Gallauer14}. Thus, $\bbA^1_{\bbD}$ is equipped with a projection isomorphism whenever $\bbD$ is closed.

\section{The universal property of $\bbA^1$}

Recall that our definition for the affine line over a derivator rested upon the intuition that if $R$ is a commutative ring, an $R[t]$-module is nothing more than an $R$-module with an $R$-module endomorphism, allowing us to define $\bbA^1_{\bbD}=\bbD^{\underline{\bbN}}$ for any derivator $\bbD$. For rings, a homomorphism $R[t]\rightarrow S$ can be broken down into a two simple parts, the ``underlying morphism'' $R\rightarrow S$, and the value of $t\in R[t]$ under the homomorphism. We expect a similar result for derivators, that a (cocontinuous) strong monoidal morphism $F\colon\bbA^1_{\bbD}\rightarrow\mathbb{E}$ for monoidal derivators $\bbD$, $\mathbb{E}$ can be determined by an ``underlying morphism'' $\bbD\rightarrow\mathbb{E}$ and the value of an object in $\mathbb{E}(\underline{\bbN})$.

\begin{lemma}
Let $F:\bbD\rightarrow\mathbb{E}$ be a cocontinuous strong monoidal morphism between monoidal derivators. Then $F^{\underline{\bbN}}\colon\bbA^1_{\bbD}\rightarrow\bbA^1_{\mathbb{E}}$ is again strong monoidal for the $\bbA^1_{\bbD}$, $\bbA^1_{\mathbb{E}}$-monoidal structures on $\mathbb{D}^{\underline{\bbN}}$, $\mathbb{E}^{\underline{\bbN}}$, respectively.
\end{lemma}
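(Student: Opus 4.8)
The plan is to supply $F^{\underline{\bbN}}$ with an explicit monoidal comparison built from the data of $F$, and then to check the coherence axioms. Recall that $F^{\underline{\bbN}}$ is the shift of $F$: at $I\in\Cat$ it is the functor $F_{\underline{\bbN}\times I}\colon\bbD(\underline{\bbN}\times I)\rightarrow\mathbb{E}(\underline{\bbN}\times I)$, with pseudonaturality isomorphisms those of $F$ for functors of the form $1_{\underline{\bbN}}\times u$; in particular $F^{\underline{\bbN}}$ is a cocontinuous morphism of derivators. Two consequences of the hypotheses do the work. First, since $F$ is strong monoidal it preserves external products: unwinding $\boxtimes_{\bbD}=\otimes_{A\times B}\circ(\pi_B^*\times\pi_A^*)$ and pasting the monoidal comparison of $F$ with the isomorphisms $\gamma^F_{\pi_B},\gamma^F_{\pi_A}$ gives a natural isomorphism $F_{A\times B}(X\boxtimes_{\bbD} Y)\cong F_A(X)\boxtimes_{\mathbb{E}} F_B(Y)$. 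Second, since $F$ is cocontinuous it preserves the homotopy left Kan extension $(+\times 1)_!$ appearing in the definition of $\boxtimes_{\bbA^1}$, i.e. the mate $\gamma^F_{(+\times 1)_!}$ is an isomorphism.

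Granting these, the monoidal comparison for $F^{\underline{\bbN}}$ on $X\in\bbA^1_{\bbD}(I)$, $Y\in\bbA^1_{\bbD}(J)$ is the composite (each $F$ taken at the relevant shifted category, up to the evident reordering of factors)
\[ F^{\underline{\bbN}}(X\boxtimes_{\bbA^1_{\bbD}} Y)=F\bigl((+\times 1)_!(X\boxtimes_{\bbD} Y)\bigr)\;\cong\;(+\times 1)_!\,F(X\boxtimes_{\bbD} Y)\;\cong\;(+\times 1)_!\bigl(F^{\underline{\bbN}}(X)\boxtimes_{\mathbb{E}} F^{\underline{\bbN}}(Y)\bigr)=F^{\underline{\bbN}}(X)\boxtimes_{\bbA^1_{\mathbb{E}}} F^{\underline{\bbN}}(Y), \]
where the first isomorphism is cocontinuity of $F$ and the second is preservation of external products. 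For the unit, recall from Proposition \ref{A1monoid} that the unit of $\bbA^1_{\bbD}$ is $i_!\mathbbm{1}_{\bbD}$; since $F$ is strong monoidal $F_e(\mathbbm{1}_{\bbD})\cong\mathbbm{1}_{\mathbb{E}}$, and since $F$ is cocontinuous $F^{\underline{\bbN}}(i_!\mathbbm{1}_{\bbD})=F_{\underline{\bbN}}(i_!\mathbbm{1}_{\bbD})\cong i_!F_e(\mathbbm{1}_{\bbD})\cong i_!\mathbbm{1}_{\mathbb{E}}$, the unit of $\bbA^1_{\mathbb{E}}$.

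It remains to check that this comparison is natural in $X,Y$, compatible with the pseudonaturality isomorphisms over $\Cat$, and satisfies the associativity hexagon and unit triangles — so that we obtain a genuine strong monoidal morphism and not merely an objectwise isomorphism. I expect this bookkeeping to be the only real obstacle. The key observation is that the associativity and unit constraints of $\boxtimes_{\bbA^1}$ are themselves assembled (exactly as in the proof of Proposition \ref{A1monoid}) from those of $\boxtimes_{\bbD}$ together with the canonical comparisons among the iterated $(+\times 1)_!$'s; pasting the displayed composite along these reduces the hexagon for $F^{\underline{\bbN}}$ to the hexagon for $F$ plus the standard coherence of composites of cocontinuity mates, and likewise for the unit triangles and the $\gamma$-compatibility. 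I would either grind out this pasting diagram directly, or — more cleanly — record once that $(-)^{\underline{\bbN}}$ (indeed $(-)^{\underline{M}}$ for any commutative monoid $M$) is a $2$-functor from monoidal derivators with cocontinuous strong monoidal morphisms to itself, of which the present lemma is the statement on $1$-cells.
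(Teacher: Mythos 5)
Your argument is correct and is essentially the paper's own proof: both decompose $\boxtimes_{\bbA^1_{\bbD}}$ as $+_!\circ\boxtimes_{\bbD}$, use strong monoidality of $F$ to commute it past $\boxtimes_{\bbD}$ and cocontinuity to commute it past $+_!$. Your additional verification of the unit object and your remarks on the coherence axioms go beyond what the paper records, but the core route is the same.
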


\begin{proof}
The following diagram commutes up to natural isomorphism: 

\centerline{
\xymatrix{
\bbD^{\underline{\bbN}}\times\bbD^{\underline{\bbN}} \ar[r]^{F^{\underline{\bbN}}\times F^{\underline{\bbN}}} \ar[d]_{\boxtimes_{\bbD}} &
\mathbb{E}^{\underline{\bbN}}\times\mathbb{E}^{\underline{\bbN}} \ar[d]^{\boxtimes_{\mathbb{E}}}\ar@{}[dl]|\swtrans \ar@{}[dl]<-1.5ex>|{\cong}\\
\bbD^{\underline{\bbN}\times\underline{\bbN}} \ar[d]_{+_!}\ar[r]_{F^{\underline{\bbN}\times\underline{\bbN}}}&
\mathbb{E}^{\underline{\bbN}\ar[d]_{+_!}\times\underline{\bbN}}\ar@{}[dl]|\swtrans \ar@{}[dl]<+1.5ex>|{\cong}\\
\bbD^{\underline{\bbN}}\ar[r]_{F^{\underline{\bbN}}}& \mathbb{E}^{\underline{\bbN}}}}

Here the commutativity of the top square up to natural isomorphism expresses that $F$ is strong monoidal, while the commutativity of the bottom square is a consequence of $F$ being cocontinuous.
\end{proof}

Note that in this case $F^{\underline{\bbN}}$ is again a cocontinuous strong monoidal functor, so we can iterate this construction.

Now we take some steps towards the decomposition discussed above. If we are given a morphism of derivators $F\colon\bbA^1_{\bbD}\rightarrow\mathbb{E}$, composing with $i_!\colon\bbD\rightarrow\bbD^{\underline{\bbN}}$ gives us the ``underlying'' morphism of derivator $F_0\colon\bbD\rightarrow\mathbb{E}$. Indeed this is what we would expect if $\bbD$ and $\mathbb{E}$ are derivators associated to rings.

\begin{lemma}
Let $F_0\colon\bbD\rightarrow\mathbb{E}$ be a cocontinuous strong monoidal morphism of derivators and $(\mathbbm{1},\alpha)\in\mathbb{E}(\underline{\bbN})$ be any coherent endomorphism. Then the composition $\textrm{ev}_{\alpha}\circ F_0^{\underline{\bbN}}\colon\bbA^1_{\bbD}\rightarrow\mathbb{E}$ is also a cocontinuous, strong monoidal morphism.
\end{lemma}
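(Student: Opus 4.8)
The plan is to factor the composite $\textrm{ev}_{\alpha}\circ F_0^{\underline{\bbN}}$ through the intermediate derivator $\bbA^1_{\mathbb{E}}$ and then assemble the conclusion from results already established. The point to notice first is that the composition makes sense precisely in this way: $F_0^{\underline{\bbN}}$ is a morphism $\bbA^1_{\bbD}\to\bbA^1_{\mathbb{E}}$, while the coherent endomorphism $(\mathbbm{1},\alpha)$ lives in $\mathbb{E}(\underline{\bbN})=\bbA^1_{\mathbb{E}}(e)$, so Definition \ref{ev alpha} (applied with $\mathbb{E}$ in place of $\bbD$) produces a morphism $\textrm{ev}_{\alpha}\colon\bbA^1_{\mathbb{E}}\to\mathbb{E}$, and $\textrm{ev}_{\alpha}\circ F_0^{\underline{\bbN}}$ is the composite of these two. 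Thus it suffices to show that each factor is cocontinuous and strong monoidal, and that these two properties are stable under composition.

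For the first factor, the preceding lemma shows that $F_0^{\underline{\bbN}}$ is strong monoidal for the $\bbA^1_{\bbD}$- and $\bbA^1_{\mathbb{E}}$-monoidal structures. Cocontinuity of $F_0^{\underline{\bbN}}$ follows because a homotopy left Kan extension along a functor $u\colon A\to B$ in the shifted derivator $\bbA^1_{\bbD}=\bbD^{\underline{\bbN}}$ is computed by the homotopy left Kan extension along $1_{\underline{\bbN}}\times u$ in $\bbD$, and $F_0$ preserves all such by hypothesis; this is exactly the remark recorded after the lemma. For the second factor, the proposition stating that the general ``evaluation at $\alpha$'' morphism is strong monoidal, together with the proposition stating that the evaluation at $\alpha$ morphisms are cocontinuous, apply verbatim with $\bbD$ replaced by $\mathbb{E}$ and with the chosen coherent endomorphism $(\mathbbm{1},\alpha)\in\mathbb{E}(\underline{\bbN})$; they give that $\textrm{ev}_{\alpha}\colon\bbA^1_{\mathbb{E}}\to\mathbb{E}$ is both cocontinuous and strong monoidal.

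It then remains to observe that a composite $G\circ F$ of cocontinuous strong monoidal morphisms of derivators is again cocontinuous and strong monoidal. For the monoidal structure one takes the pasting of the structure constraints, $G\big(F(X\boxtimes Y)\big)\cong G\big(FX\boxtimes FY\big)\cong GFX\boxtimes GFY$, and the hexagon/unit coherences follow formally from those of $F$ and $G$; for cocontinuity the comparison $\gamma^{G\circ F}_{u_!}$ is the canonical composite of $\gamma^{G}_{u_!}$ with $G$ applied to $\gamma^{F}_{u_!}$, hence a composite of isomorphisms. Applying this with $F=F_0^{\underline{\bbN}}$ and $G=\textrm{ev}_{\alpha}$ completes the argument.

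I do not expect a genuine obstacle here: the statement is essentially a bookkeeping assembly of the structural propositions of Sections 4--5. The only points that require a moment's care are (i) recognizing that the evaluation morphism must be formed over $\mathbb{E}$, not $\bbD$, and (ii) verifying that the earlier propositions about $\textrm{ev}_{\alpha}$, although phrased for a single monoidal derivator, transport unchanged to $\mathbb{E}$; everything else is formal $2$-categorical manipulation of composites of cocontinuous strong monoidal morphisms.
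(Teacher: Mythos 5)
Your proposal is correct and follows exactly the paper's own (one-line) argument: the paper also observes that both $\textrm{ev}_{\alpha}$ and $F_0^{\underline{\bbN}}$ are already known to be cocontinuous and strong monoidal, and concludes by composing. You simply spell out the bookkeeping (that $\textrm{ev}_{\alpha}$ is formed over $\mathbb{E}$, and that these properties are stable under composition) that the paper leaves implicit.
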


\begin{proof}
This is clear, since both $\textrm{ev}_{\alpha}$ and $F_0^{\underline{\bbN}}$ were known to be strong monoidal and cocontinuous.
\end{proof}

Moreover, such a morphism $\textrm{ev}_{\alpha}\circ F_0^{\underline{\bbN}}$ has underlying morphism $F_0$, since $\textrm{ev}_{\alpha}\circ F_0^{\underline{\bbN}}\circ i_!\cong \textrm{ev}_{\alpha}\circ i_!\circ F_0$ by co-continuity of $F_0$, and then by noting that $\textrm{ev}_{\alpha}\circ i_!$ is just the identity morphism.

\begin{theorem}
Let $\bbD$, $\mathbb{E}$ be two monoidal derivators, and let $F\colon\bbA^1_{\bbD}\rightarrow\mathbb{E}$ be a cocontinuous monoidal morphism. 

\begin{enumerate}
\item For a monoidal morphism of derivators $F:\bbA^1_{\bbD}\rightarrow\mathbb{E}$, $i^*F^{\underline{\bbN}}(+^*i_!\mathbbm{1}_{\bbD})=\mathbbm{1}_{\mathbb{E}}$. Therefore, $F^{\underline{\bbN}}(+^*i_!\mathbbm{1}_{\bbD})$ is a coherent endomorphism of the identity, say $(\mathbbm{1}_{\mathbb{E}},\alpha)$. Call $\alpha$ the \emph{type} of $F$. 

Let $F_0=F\circ i_!\colon\bbD\rightarrow\mathbb{E}$ be called the \emph{base} of the morphism $F$.

\item The only cocontinuous monoidal morphism with base $F_0$ and type $\alpha$ is $\textrm{ev}_{\alpha}\circ F_0^{\underline{\bbN}}$. That is to say, every cocontinuous monoidal morphism of derivators $\bbA^1_{\bbD}\rightarrow\mathbb{E}$ can be obtained as a composition $\textrm{ev}_{\alpha}\circ F_0^{\underline{\bbN}}$ for some coherent endomorphism $(\mathbbm{1}_{\mathbb{E}},\alpha)$ and some strong monoidal morphism $F_0:\bbD\rightarrow\mathbb{E}$.
\end{enumerate}
\label{maintheorem1}
\end{theorem}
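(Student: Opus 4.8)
The plan is to establish the two parts separately, with part (1) being essentially a direct computation and part (2) being the substantive uniqueness argument. For part (1), I would compute $i^*F^{\underline{\bbN}}(+^*i_!\mathbbm{1}_{\bbD})$ directly. Observe that $+^*i_!\mathbbm{1}_{\bbD}$ is exactly the object of $\bbD(\underline{\bbN}\times\underline{\bbN})\cong\bbA^1_{\bbA^1_{\bbD}}(e)$ that plays the role of the monoidal unit in a suitable sense; more precisely, I would first identify $+^*i_!\mathbbm{1}_{\bbD}$ with a coherent endomorphism of the unit of $\bbA^1_{\bbD}$, i.e.\ an object $X\in\bbA^1_{\bbD}(\underline{\bbN})$ with $i^*X\cong\mathbbm{1}_{\bbA^1_{\bbD}}=i_!\mathbbm{1}_{\bbD}$ (this uses $i^*+^*\cong(+\circ(1\times i))^*=\mathrm{id}^*$, since $+\circ(1\times i)=\mathrm{id}_{\underline{\bbN}}$). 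Then since $F$ is strong monoidal it preserves the monoidal unit, so $F^{\underline{\bbN}}$ applied to this object, restricted along $i^*$, gives $i^*F^{\underline{\bbN}}(+^*i_!\mathbbm{1}_{\bbD})\cong F(i^*(+^*i_!\mathbbm{1}_{\bbD}))\cong F(\mathbbm{1}_{\bbA^1_{\bbD}})\cong\mathbbm{1}_{\mathbb{E}}$, using the pseudonaturality isomorphism $\gamma_i^{F^{\underline{\bbN}}}$ (or rather $i^*F^{\underline{\bbN}}\cong F i^*$ as morphisms of derivators, which holds because restriction commutes with everything). This gives the first claim; the definitions of \emph{type} and \emph{base} are then just names.

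For part (2), the strategy is to show that any cocontinuous monoidal $F$ with base $F_0$ and type $\alpha$ is forced to equal $\mathrm{ev}_{\alpha}\circ F_0^{\underline{\bbN}}$. The key structural input is that every object of $\bbA^1_{\bbD}(I)=\bbD(\underline{\bbN}\times I)$ can be built, using homotopy colimits (which $F$ preserves, being cocontinuous) and the monoidal structure (which $F$ preserves, being strong monoidal), out of objects coming from $\bbD$ via $i_!$ together with the single ``generator'' $+^*i_!\mathbbm{1}_{\bbD}$. Concretely, I would argue that for $Y\in\bbA^1_{\bbD}(I)$ one has a canonical expression — for instance $Y\cong$ something like a homotopy colimit over $\underline{\bbN}$ (or over a bar-type construction) of external products of objects $i_!(Y_j)$ with powers of the coherent $\alpha$-endomorphism — so that applying $F$ and using cocontinuity plus strong monoidality pushes $F$ through all the pieces, leaving only $F_0=F\circ i_!$ on the base data and the chosen coherent endomorphism $F^{\underline{\bbN}}(+^*i_!\mathbbm{1})=(\mathbbm{1}_{\mathbb{E}},\alpha)$ in the ``$t$-direction.'' The right-hand side $\mathrm{ev}_{\alpha}\circ F_0^{\underline{\bbN}}$ is, by Definition~\ref{ev alpha} and the formula $\mathrm{ev}_{\alpha}(-)=i^*+_!(-\boxtimes_{\bbD}(\mathbbm{1},\alpha))$, built from exactly the same ingredients in exactly the same pattern, so the two morphisms agree on objects and (by naturality/coherence of all the isomorphisms involved) as morphisms of derivators.

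A cleaner way to organize the uniqueness argument, which I would try first, is to reduce everything to checking agreement on a generating object together with compatibility with the operations $F$ preserves. Both $F$ and $\mathrm{ev}_{\alpha}\circ F_0^{\underline{\bbN}}$ are cocontinuous strong monoidal; they have the same base $F_0$ (for $\mathrm{ev}_{\alpha}\circ F_0^{\underline{\bbN}}$ this was checked in the remark preceding the theorem, $\mathrm{ev}_{\alpha}\circ F_0^{\underline{\bbN}}\circ i_!\cong\mathrm{ev}_{\alpha}\circ i_!\circ F_0\cong F_0$); and they have the same type $\alpha$ (by construction on the one side, and by the computation $\mathrm{ev}_{\alpha}\circ F_0^{\underline{\bbN}}(+^*i_!\mathbbm{1}_{\bbD})=i^*+_!(F_0^{\underline{\bbN}}(+^*i_!\mathbbm{1})\boxtimes_{\bbD}(\mathbbm{1},\alpha))$, which unwinds to $(\mathbbm{1}_{\mathbb{E}},\alpha)$ on the other). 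Then I would invoke a rigidity/generation statement: a cocontinuous strong monoidal morphism out of $\bbA^1_{\bbD}$ is determined by its restriction to $i_!\bbD$ and its value on the universal coherent endomorphism $+^*i_!\mathbbm{1}_{\bbD}$, because $\bbA^1_{\bbD}$ is generated under homotopy colimits and external products by these objects. The main obstacle I anticipate is precisely proving this generation statement rigorously at the level of derivators — i.e.\ exhibiting each $Y\in\bbD(\underline{\bbN}\times I)$ as a (functorial, coherent) homotopy colimit of explicit external products of $i_!$-objects with iterated self-external-products of $+^*i_!\mathbbm{1}$, and checking that the comparison between the two sides is natural in $I$ and respects the pseudonaturality data $\gamma_u$. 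If a fully general such decomposition is awkward, an alternative is to use that both morphisms are cocontinuous to reduce (via Der4 / Kan extension along $\underline{\bbN}\hookrightarrow\cdots$) to the underlying-category level $\bbD(\underline{\bbN})$, where the description of objects as ``$\bbD(e)$-object with an endomorphism'' makes the decomposition transparent, and then bootstrap back up using that a morphism of derivators preserving homotopy colimits is determined by its underlying functor together with its behavior on Kan extensions.
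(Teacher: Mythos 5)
Your part (1) is essentially the paper's argument: both reduce to the identity $+\circ(1\times i)=\mathrm{id}_{\underline{\bbN}}$, hence $i^*F^{\underline{\bbN}}(+^*(-))\cong F(-)$, and then to unitality of the strong monoidal $F$. No issue there.

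Part (2) has a genuine gap, and you have located it yourself: the entire weight of the uniqueness statement rests on the ``generation'' claim that every $Y\in\bbD(\underline{\bbN}\times I)$ is a coherent homotopy colimit of external products of $i_!$-objects with powers of $+^*i_!\mathbbm{1}_{\bbD}$, and you never prove it. It is not at all clear how to produce such a bar-type resolution as a \emph{coherent} diagram (rather than an underlying one), nor how to make the resulting comparison pseudonatural in $I$; deferring the only nontrivial step leaves the proof incomplete. The paper avoids any generation or density argument. Its key move is a single closed-form decomposition, valid for every $X$ at once,
$$+^*X\;\cong\;(1\times i)_!X\;\boxtimes_{\bbA^1_{\bbA^1_{\bbD}}}\;+^*i_!\mathbbm{1}_{\bbD}$$
in $\bbA^1_{\bbA^1_{\bbD}}$. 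Granting this, the theorem is a three-line computation: $F(X)\cong i^*F^{\underline{\bbN}}(+^*X)\cong i^*\bigl(F^{\underline{\bbN}}((1\times i)_!X)\boxtimes F^{\underline{\bbN}}(+^*i_!\mathbbm{1}_{\bbD})\bigr)\cong i^*\bigl(F_0^{\underline{\bbN}}X\boxtimes(\mathbbm{1}_{\mathbb{E}},\alpha)\bigr)=\textrm{ev}_{\alpha}F_0^{\underline{\bbN}}X$, using cocontinuity for $F^{\underline{\bbN}}(1\times i)_!\cong(1\times i)_!F$ and the preliminary lemma that $F^{\underline{\bbN}}$ is again cocontinuous strong monoidal. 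The displayed decomposition itself reduces, after unwinding the definition of $\boxtimes_{\bbA^1_{\bbA^1_{\bbD}}}$, to one base-change isomorphism $(+\times 1)_!(1\times +)^*\cong +^*+_!$, i.e.\ to the homotopy exactness of the commutative square on $\underline{\bbN}\times\underline{\bbN}\times\underline{\bbN}$ with $1\times +$ and $+\times 1$ on two sides and $+$ on the other two; the paper proves that by pasting with the comma squares from (Der4) and exhibiting explicit adjunctions. If you want to repair your outline, replace the generation claim by this single isomorphism: it is exactly what turns ``determined by base and type'' from a density heuristic into a formula.
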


First we see where the information of the coherent endomorphism $\alpha$ can be obtained.

\begin{lemma}
The following diagram commutes.

$$\begin{CD}
\bbD^{\underline{\bbN}\times\underline{\bbN}}@>F^{\underline{\bbN}}>>\mathbb{E}^{\underline{\bbN}}\\
@A +^*AA @V i^*VV\\
\bbD^{\underline{\bbN}}@>F>>\mathbb{E}
\end{CD}$$
\end{lemma}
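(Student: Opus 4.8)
The plan is to unwind both composites on a general object $X \in \bbD^{\underline{\bbN}\times\underline{\bbN}}(I) = \bbD(\underline{\bbN}\times\underline{\bbN}\times I)$ and check they agree, using only the definitions of the pointwise morphisms $i^*$, $+^*$ and the pseudonaturality isomorphisms $\gamma^F$. Going one way around the square, $i^* \circ F^{\underline{\bbN}}$ applied to $X$ is $i^*$ (that is, restriction along $i \times 1_I \colon I \to \underline{\bbN}\times I$) of $F^{\underline{\bbN}}(X) = F_{\underline{\bbN}\times I}\bigl((1 \times +)^*\!X\bigr)$ — wait, more carefully: $F^{\underline{\bbN}}$ is the morphism of derivators $\bbD^{\underline{\bbN}} \to \mathbb{E}^{\underline{\bbN}}$ obtained by shifting $F$, so at level $I$ it is $F_{\underline{\bbN}\times I}$. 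Thus I must track two things: which functor on index categories each restriction corresponds to, and how $F$'s structure isomorphisms $\gamma^F_u$ intervene. The key point is the commuting square of index categories
$$
\begin{CD}
\underline{\bbN}\times\underline{\bbN}\times I @>i\times 1\times 1>> I\times I\quad\text{(no)}
\end{CD}
$$
— let me instead state it correctly: I claim the square
$$
\xymatrix{
\underline{\bbN}\times I \ar[r]^{+\times 1_I} \ar[d]_{\text{?}} & \text{?}
}
$$
commutes, namely $i_{\underline{\bbN}} \colon I \to \underline{\bbN}\times I$ composed with $+ \times 1_I \colon \underline{\bbN}\times\underline{\bbN}\times I \to \underline{\bbN}\times I$, pulled back, relates to $i$ on $\underline{\bbN}\times\underline{\bbN}\times I$. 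Concretely: $(+\times 1_I)\circ(i\times 1_I) = i \times 1_I \colon \underline{\bbN}\times I \to \underline{\bbN}\times\underline{\bbN}\times I$? No — the clean identity is that the two composites $\underline{\bbN}\times I \xrightarrow{\,i\times 1\,} \underline{\bbN}\times\underline{\bbN}\times I$ (inserting a zero in the first factor) followed by $+\times 1$, versus the identity, agree, because adding $0$ is the identity on $\underline{\bbN}$. This is exactly the identity $+\circ(i\times 1) = \mathrm{id}_{\underline{\bbN}}$ already used in the proof of Proposition \ref{A1monoid}, tensored with $1_I$.

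So the strategy is: first, record the index-category identity $+\circ(i\times 1_{\underline{\bbN}}) = \mathrm{id}_{\underline{\bbN}}$ (equivalently $+\circ(1_{\underline{\bbN}}\times i) = \mathrm{id}_{\underline{\bbN}}$), which gives on restrictions $(i\times 1)^* \circ (+\times 1)^* = \mathrm{id}$ on $\bbD^{\underline{\bbN}}$ and likewise on $\mathbb{E}^{\underline{\bbN}}$. Second, use pseudonaturality of $F$ with respect to the functor $i \colon e \to \underline{\bbN}$ (shifted by $I$, and also by an extra $\underline{\bbN}$): the square in the statement of the Definition of morphism of prederivators says $u^* F_B \cong F_A u^*$ for any $u$. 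Applying this with $u = i \times 1$ (suitably) gives $i^* \circ F^{\underline{\bbN}} \cong F \circ i^*$ where now $i^*$ on the left drops a copy of $\underline{\bbN}$ from the $\mathbb{E}^{\underline{\bbN}}$ side and on the right drops it on the $\bbD$ side. Third, combine: starting from $\bbD^{\underline{\bbN}}$, apply $+^*$ to land in $\bbD^{\underline{\bbN}\times\underline{\bbN}}$, then $F^{\underline{\bbN}}$ to land in $\mathbb{E}^{\underline{\bbN}}$, then $i^*$ to land in $\mathbb{E}$; by pseudonaturality this equals first $+^*$, then $i^*$ (on the $\bbD$-side, giving $(i\times 1)^*(+\times 1)^* = \mathrm{id}$ by step one), then $F$. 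Hence both composites equal $F \colon \bbD^{\underline{\bbN}} \to \mathbb{E}$, and in particular agree, so the square commutes up to the evident canonical isomorphism.

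I expect the only real subtlety — the "main obstacle," though it is bookkeeping rather than a genuine difficulty — to be keeping the various shifted copies of $\underline{\bbN}$ and the index $I$ straight, and making sure the pseudonaturality isomorphism $\gamma^F$ is invoked for the correct functor so that the coherence is automatic. Once the index-category identity $+\circ(i\times 1) = \mathrm{id}$ is in hand and one commits to reading $i^*$ and $+^*$ as restriction along explicit functors between the shape categories, the verification is a two-line diagram chase: $i^* \circ F^{\underline{\bbN}} \circ +^* \;\cong\; F \circ (i\times 1)^* \circ (+\times 1)^* \;=\; F \circ \mathrm{id} \;=\; F$, where the first isomorphism is pseudonaturality of $F$ and the middle equality is the shape-category identity. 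No derivator axioms beyond the pseudonaturality built into the notion of a morphism of (pre)derivators are needed — in particular no homotopy-exactness computation, since all maps here are restrictions, not Kan extensions.
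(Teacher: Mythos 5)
Your argument is correct and is essentially the paper's own proof: both rest on the shape-category identity $+\circ(1\times i)=\mathrm{id}_{\underline{\bbN}}$ (so that restriction gives $(1\times i)^*+^*=\mathrm{id}$) combined with the pseudonaturality square $i^*\circ F^{\underline{\bbN}}\cong F\circ(1\times i)^*$, pasted together. The only differences are presentational --- the paper pastes two commutative squares horizontally where you write one composite chain --- and the harmless choice of $(i\times 1)$ versus $(1\times i)$, which is immaterial since $+$ is symmetric.
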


\begin{proof}
Since the composition

$$\begin{CD}
\underline{\bbN}@>1\times i>>\underline{\bbN}\times\underline{\bbN}@>+>>\underline{\bbN}
\end{CD}$$

is the identity, so too is the composition $(1\times i)^*+^*$. Therefore, the below diagram commutes: 

$$\begin{CD}
\bbD^{\underline{\bbN}\times\underline{\bbN}}@>\textrm{Id}>>\bbD^{\underline{\bbN}\times\underline{\bbN}}\\
@A+^*AA @V(1\times i)^*VV\\
\bbD^{\underline{\bbN}}@>\textrm{Id}>>\bbD^{\underline{\bbN}}
\end{CD}$$

Then, for any morphism $\bbD^{\underline{\bbN}}\rightarrow\mathbb{E}$, we have a commutative diagram

$$\begin{CD}
\bbD^{\underline{\bbN}\times\underline{\bbN}}@>F^{\underline{\bbN}}>>\mathbb{E}^{\underline{\bbN}}\\
@V (1\times i)^*VV @V i^*VV\\
\bbD^{\underline{\bbN}}@>F>> \mathbb{E}
\end{CD}$$ Pasting the two squares horizontally gives precisely our desired diagram.
\end{proof}

In the specific case of $F^{\underline{\bbN}}(+^*\mathbbm{1}_{\bbA^1_{\bbD}})$, from being monoidal we know that $i^*F^{\underline{\bbN}}(+^*\mathbbm{1}_{\bbA^1_{\bbD}})$ is just $\mathbbm{1}_{\mathbb{E}}$. Therefore, $F^{\underline{\bbN}}(+^*\mathbbm{1}_{\bbA^1_{\bbD}})$ is equal to $(\mathbbm{1}_{\mathbb{E}},\alpha)\in\mathbb{E}(\underline{\bbN})$ for some $\alpha$. This endomorphism $\alpha$ is an important piece of information that we refer to as the ``type'' of our morphism $F$.

So let $F\colon\bbA^1_{\bbD}\rightarrow\mathbb{E}$ be a morphism of derivators. We wish to determine the image of $F^{\underline{\bbN}}(+^*X)$ for any $X\in\bbD^{\underline{\bbN}}$. First we give a decomposition of $+^*X$ as the $\bbA^1_{\bbA^1_{\bbD}}$-tensor product.

\begin{proposition}
$+^*X$ is the $\bbA^1_{\bbA^1_{\bbD}}$-tensor product of $(1\times i)_!X$ and $+^*i_!\mathbbm{1}_{\bbD}$. 
\end{proposition}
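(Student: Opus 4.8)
The plan is to strip off the outer affine-line layer using the unit law for the $\bbA^1$-construction, and then reduce the residual claim to the homotopy exactness of an explicit square of indexing categories.

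\textbf{Step 1.} First observe that $(1\times i)_!\colon\bbA^1_{\bbD}(e)\to\bbA^1_{\bbA^1_{\bbD}}(e)$ is precisely the structure morphism $i_!$ of $\bbA^1_{\bbA^1_{\bbD}}$ viewed as the affine line over $\bbA^1_{\bbD}$: forming $\bbA^1$ of $\bbA^1_{\bbD}=\bbD^{\underline{\bbN}}$ appends a second copy of $\underline{\bbN}$, and the corresponding functor $e\to\underline{\bbN}$ becomes, on indexing categories, $1_{\underline{\bbN}}\times i$. Now the computation in the proof of Proposition \ref{A1monoid} that identifies the monoidal unit---which uses only $+\circ(i\times 1)=\mathrm{id}$ together with cocontinuity of the external product---proves more generally that for any monoidal derivator $\bbG$, any $Z\in\bbG(e)$ and any $W\in\bbG(\underline{\bbN})$ one has $i_!Z\boxtimes_{\bbA^1_{\bbG}}W\cong Z\boxtimes_{\bbG}W$. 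Taking $\bbG=\bbA^1_{\bbD}$, $Z=X$ and $W=+^*i_!\mathbbm{1}_{\bbD}$ gives $(1\times i)_!X\otimes_{\bbA^1_{\bbA^1_{\bbD}}}(+^*i_!\mathbbm{1}_{\bbD})\cong X\boxtimes_{\bbA^1_{\bbD}}(+^*i_!\mathbbm{1}_{\bbD})$, so it remains to prove $X\boxtimes_{\bbA^1_{\bbD}}(+^*i_!\mathbbm{1}_{\bbD})\cong+^*X$.

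\textbf{Step 2.} Here I would unwind $\boxtimes_{\bbA^1_{\bbD}}$ as $\sigma_!\circ\boxtimes_{\bbD}$, where $\sigma$ is the ``addition'' functor on indexing categories appearing in the definition of the $\bbA^1$-external product. Since the second factor is assembled entirely from the monoidal unit, the $\bbD$-tensor drops out: using that $\boxtimes_{\bbD}$ commutes with restrictions and is cocontinuous in each variable, and that $X\boxtimes_{\bbD}\mathbbm{1}_{\bbD}\cong X$, one obtains $X\boxtimes_{\bbD}(+^*i_!\mathbbm{1}_{\bbD})\cong(1\times+)^*(1\times i)_!X$ and hence $X\boxtimes_{\bbA^1_{\bbD}}(+^*i_!\mathbbm{1}_{\bbD})\cong\sigma_!\,(1\times+)^*\,(1\times i)_!\,X$, a composite of homotopy Kan extensions and restrictions along explicit functors among finite products of $\underline{\bbN}$. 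The claim then becomes that this composite equals $+^*$, a statement about small categories valid for every derivator. One can either resolve it into the homotopy exactness of the evident square(s) and verify, by \cite[Theorem 3.8]{GrPoSh14a}, that the relevant comma categories $(a/D/b)_{\gamma}$ are homotopy contractible---these are ``shifted poset'' categories connected to $e$ by a zigzag of adjunctions, of exactly the kind appearing in the proof that the two definitions of evaluation at $1$ coincide---or, more efficiently, observe that for $\bbD=\bbD_R$ the identity unwinds to the ring isomorphism $M[s]\otimes_{R[t,s]}R[t,s]/(t-s)\cong M$ (with $t$ and $s$ both acting as $t_M$) and invoke \cite[Theorem 3.16]{GrPoSh14a} to propagate it to all derivators.

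\textbf{Expected obstacle.} The genuine work is in Step 2, namely tracking which of the several copies of $\underline{\bbN}$ each functor acts on, and thereby pinning down the precise square whose homotopy exactness is required. Once that square is identified its verification is routine, following the template already used in this section; everything else is formal manipulation of cocontinuous morphisms and the external product.
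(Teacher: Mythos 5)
Your reduction is correct and follows the paper's own route almost verbatim: Step 1 reproduces the paper's first three displayed isomorphisms (cocontinuity of the external product together with $+\circ(1\times i)=\mathrm{id}$), and Step 2 lands on the same residual statement, which the paper isolates as the isomorphism $(+\times 1)_!(1\times +)^*\cong +^*+_!$ (your formulation $(+\times 1)_!(1\times +)^*(1\times i)_!\cong +^*$ follows from it by cancelling $+_!(1\times i)_!\cong\mathrm{id}$). The genuine gap is that this residual isomorphism is where all of the content of the proposition lives, and you do not establish it. Declaring its verification ``routine, following the template already used in this section'' understates the difficulty: the paper devotes an entire separate lemma to exactly this square, pasting with two comma squares coming from (Der4), whiskering, and then explicitly recording that the obvious move --- showing the induced functor $G\colon((+\times 1)/e)\to(+/e)$ is a right adjoint --- \emph{fails}. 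One has to interpolate the full subcategory $\cat C_0\subset((+\times 1)/e)$ of objects of the form $(a,0)$ and exhibit two separate adjunctions to conclude. So the step you defer as routine is precisely the one that is not.

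Your proposed ``more efficient'' finish also does not work as stated. \cite[Theorem 3.16]{GrPoSh14a} reduces homotopy exactness to a condition on the underlying comma categories (equivalently, to a verification against the homotopy derivator of spaces, or against all model categories at once); it does not permit checking the isomorphism on a single algebraic derivator $\bbD_R$ and then ``propagating it to all derivators.'' The ring-level identity $M[s]\otimes_{R[t,s]}R[t,s]/(t-s)\cong M$ is indeed the correct underlying shadow of the claim and a useful sanity check, but a derivator of the form $\bbD_R$ cannot detect homotopy exactness in general, so this route cannot substitute for the comma-category (or adjunction) argument. Your first suggested route --- verifying contractibility of the categories $(a/D/b)_{\gamma}$ directly via \cite[Theorem 3.8]{GrPoSh14a} --- is viable in principle, but it remains entirely unexecuted, and identifying and contracting those categories is the actual work.
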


\begin{proof}
From $\eqref{A1monoid}$, recall the definition of the $\bbA^1$-monoidal structure. Here we end up taking a $\bbA^1_{\bbA^1_{\bbD}}$-tensor product, so there are two layers of complication.

For us, we consider $\mathbbm{1}_{\bbD}\in\bbD(e)$ and $X\in\bbA^1_{\bbD}(I)=\bbD(\underline{\bbN}\times I)$.

First we re-write $+^*X=+^*(X\boxtimes_{\bbA^1_{\bbD}} i_!\mathbbm{1}_{\bbD})$. Here we can draw upon the following diagram, expressing the definition of the $\bbA^1$-monoidal structure.

\centerline{
\xymatrix{
\bbD(\underline{\bbN})\times\bbD(\underline{\bbN}) \ar[r]^{\boxtimes_{\bbD}} \ar[d]_{\textrm{Id}} &
\bbD(\underline{\bbN}\times\underline{\bbN}) \ar[d]^{+_!}\ar@{}[dl]|\swtrans \ar@{}[dl]<-1.5ex>|{\textrm{Id}}\\
\bbA^1_{\bbD}(e)\times\bbA^1_{\bbD}(e)\ar[r]_{\boxtimes_{\bbA^1_{\bbD}}}&
\bbD(\underline{\bbN})\cong\bbA^1_{\bbD}(e)}}
So we can write $+^*(X\boxtimes_{\bbA^1_{\bbD}} i_!\mathbbm{1}_{\bbD})=+^*+_!(X\boxtimes_{\bbD} i_!\mathbbm{1}_{\bbD})$. Now, we also have

 \begin{eqnarray*}
(1\times i)_!X\boxtimes_{\bbA^1_{\bbA^1_{\bbD}}} +^*i_!\mathbbm{1}_{\bbD}&=&+_!((1\times i)_! X\boxtimes_{\bbA^1_{\bbD}} +^*i_!\mathbbm{1}_{\bbD})\\
&\cong&+_!(1\times i)_!(X\boxtimes_{\bbA^1_{\bbD}}+^*i_!\mathbbm{1}_{\bbD})\\
&\cong& X\boxtimes_{\bbA^1_{\bbD}}(+^*i_!\mathbbm{1}_{\bbD})\\
&=&(+\times 1)_!(X\boxtimes +^*i_!\mathbbm{1}_{\bbD})\\
&\cong&(+\times 1)_!(1\times +)^*(X\boxtimes i_!\mathbbm{1}_{\bbD})
 \end{eqnarray*} 
 
The individual isomorphisms are as follows. The first equality is just the definition of the $\bbA^1_{\bbA^1_{\bbD}}$-external product relative to the $\bbA^1_{\bbD}$-external product. The second isomorphism is from co-continuity of the external product. Therefore, $((1\times i)_!X\boxtimes_{\bbA^1_{\bbD}}+^*i_!\mathbbm{1}_{\bbD})\cong (i\times 1)_!(X\boxtimes_{\bbA^1_{\bbD}} +^*i_!\mathbbm{1}_{\bbD})$, 
 
For the third isomorphism, having been left with $+_!(1\times i)_!$, by naturality we know that $+_!(1\times i)_!\cong (+\circ (1\times i))_!$, but $+\circ (1\times i)$ is just the identity functor on $\underline{\bbN}$. Thus, $+_! (1\times i)_!\cong 1_!$, and $1_!\cong\textrm{Id}$. Hence we can simply remove $+_!(1\times i)_!$ for the third isomorphism.

The fourth equality is once again a definition of the $\bbA^1_{\bbD}$-external product relative to the $\bbD$-external product, while the fifth isomorphism is a reflection of the fact that taking the external product with any object is a morphism of derivators. 

Thus, we have $(1\times i)_!X\boxtimes_{\bbA^1_{\bbA^1_{\bbD}}} +^*i_!\mathbbm{1}_{\bbD}=(+\times 1)_!(1\times +)^*(X\boxtimes_{\bbD} i_!\mathbbm{1}_{\bbD})$. We would like to show it to be isomorphic to $+^*(M,f)=+^*+_!(i_!\mathbbm{1}_{\bbD}\boxtimes_{\bbD} (M,f))$. A sufficient statement would be simply that $(+\times 1)_!(1\times +)^*\cong +^*+_!$. 
\end{proof}
 
\begin{lemma}
The following square is homotopy exact:
 
\begin{equation}
\centerline{
\xymatrix{
\underline{\bbN}\times\underline{\bbN}\times\underline{\bbN} \ar[r]^{1\times +} \ar[d]_{1\times +} &
\underline{\bbN}\times\underline{\bbN}\ar[d]^{+}\ar@{}[dl]|\swtrans \ar@{}[dl]<-1.5ex>|{\textrm{Id}}\\
\underline{\bbN}\times\underline{\bbN} \ar[r]_{+}&
\underline{\bbN} }}
\label{square1}%
\end{equation}

Here the natural transformation in the middle is just the identity, as both compositions are just $$\underline{\bbN}\times\underline{\bbN}\times\underline{\bbN}\rightarrow\underline{\bbN},$$ taking a map $(a,b,c)\mapsto a+b+c$.
\end{lemma}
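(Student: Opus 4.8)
The plan is to verify homotopy exactness of \eqref{square1} through the comma–category criterion \cite[Theorem 3.8]{GrPoSh14a}. Write $D=\underline{\bbN}\times\underline{\bbN}\times\underline{\bbN}$, and read the top arrow of \eqref{square1} as $1\times+$ and the left arrow as $+\times1$ (the reading needed for this square to supply the isomorphism $(+\times1)_!(1\times+)^*\cong +^*+_!$ sought in the preceding proposition); then $+\circ(1\times+)=+\circ(+\times1)$, so the displayed $2$-cell is legitimately the identity. Since the square commutes, \cite[Theorem 3.8]{GrPoSh14a} reduces the claim to showing that for every morphism $\gamma\in\textrm{Hom}_{\underline{\bbN}}(\bullet,\bullet)=\bbN$ — and for the unique objects $a,b$ of the two copies of $\underline{\bbN}\times\underline{\bbN}$ — the category $(a/D/b)_\gamma$ is homotopy contractible.

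First I would make $(a/D/b)_\gamma$ explicit. Its objects are triples $(\bullet,f,g)$ with $f\in\textrm{Hom}(\bullet,(1\times+)(\bullet))=\bbN\times\bbN$ and $g\in\textrm{Hom}((+\times1)(\bullet),\bullet)=\bbN\times\bbN$; writing $f=(f_1,f_2)$, $g=(g_1,g_2)$, the defining equation $v_1(g)\circ\alpha\circ v_2(f)=\gamma$ becomes $f_1+f_2+g_1+g_2=\gamma$. A morphism to $(f_1',f_2',g_1',g_2')$ is an $h=(h_1,h_2,h_3)\in\bbN^{3}$ with $f_1'=f_1+h_1$, $f_2'=f_2+h_2+h_3$, $g_1=g_1'+h_1+h_2$, $g_2=g_2'+h_3$; in particular $(a/D/b)_\gamma$ is a finite poset. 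The useful move is the substitution $x=f_1$, $y=g_2$, $z=f_1+g_1$: the three conditions $h_i\ge 0$ become exactly $x\le x'$, $y\ge y'$, $z\ge z'$, the remaining constraints on objects reduce to $0\le x\le z$ and $y+z\le\gamma$, and the equation for $f_2'$ becomes automatic. Hence $(a/D/b)_\gamma$ is isomorphic to $P_\gamma:=\{(x,y,z)\in\bbN^{3}:0\le x\le z,\ y+z\le\gamma\}$ ordered by $x\le x'$, $y\ge y'$, $z\ge z'$.

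Then I would prove $P_\gamma$ homotopy contractible by linking it to $e$ through an adjunction, which the discussion following the definition of homotopy contractibility permits. Let $P_\gamma^{0}\subseteq P_\gamma$ be the full subposet on the triples with $y=0$, i.e.\ the poset of pairs $0\le x\le z\le\gamma$ ordered by $x\le x'$, $z\ge z'$. The assignment $(x,y,z)\mapsto(x,0,z)$ lands in $P_\gamma^{0}$, is order-preserving, and is left adjoint to the inclusion $P_\gamma^{0}\hookrightarrow P_\gamma$: there is a morphism $(x,y,z)\to(x',0,z')$ in $P_\gamma$ exactly when $x\le x'$ and $z\ge z'$, which is precisely the condition for a morphism $(x,0,z)\to(x',0,z')$ in $P_\gamma^{0}$. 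So $P_\gamma$ is connected by an adjunction to $P_\gamma^{0}$, and $P_\gamma^{0}$ has an initial object, namely $(0,0,\gamma)$; hence both are homotopy contractible. Applying \cite[Theorem 3.8]{GrPoSh14a} then completes the proof.

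The point requiring care is the bookkeeping in the middle step: one must keep track that along morphisms the $f_i$-coordinates only increase while the $g_j$-coordinates only decrease, that there is a coupling constraint on top of the evident monotonicities, and that $z=f_1+g_1$ is the substitution converting the whole description into an honest product order. I would stress that $P_\gamma$ has \emph{neither} an initial nor a terminal object on the nose — there are $\gamma+1$ minimal and $\gamma+1$ maximal objects — so passing to the reflective subposet $P_\gamma^{0}$ (or a comparable device) is genuinely needed rather than merely convenient. As an alternative route one could skip the comma-category analysis and instead invoke \cite[Theorem 3.16]{GrPoSh14a} to reduce homotopy exactness to the case of an arbitrary model category, where \eqref{square1} becomes a concrete statement about homotopy colimits, exactly as in the lemma establishing $+_!+^{*}\cong\textrm{Id}$; but the argument sketched above is the more self-contained of the two.
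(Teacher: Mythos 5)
Your proof is correct, but it takes a genuinely different route from the paper's. You apply the pointwise criterion of \cite[Theorem 3.8]{GrPoSh14a} directly: since every category in the square has one object, only the comma categories $(\bullet/\underline{\bbN}^3/\bullet)_\gamma$ for $\gamma\in\bbN$ need to be checked, and your identification of this category with the poset $P_\gamma=\{(x,y,z)\in\bbN^3: 0\le x\le z,\ y+z\le\gamma\}$ under the substitution $x=f_1$, $y=g_2$, $z=f_1+g_1$ is accurate (I verified that the morphism $(h_1,h_2,h_3)$ is uniquely determined by source and target, that the three positivity conditions translate exactly to $x\le x'$, $y\ge y'$, $z\ge z'$, and that the reflection onto the $y=0$ subposet is left adjoint to the inclusion, with $(0,0,\gamma)$ initial there). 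The paper instead pastes the square with the (Der4) comma square for $+\times 1$, whiskers the transformations, constructs an explicit comparison functor $G$ between the comma categories $(+\times 1/e)$ and $(+/e)$, and exhibits two adjunctions through an auxiliary subcategory $\cat C_0$; this avoids the four-variable bookkeeping but at the cost of several pasting reductions. Your argument is the more self-contained of the two, and your observation that $P_\gamma$ has neither an initial nor a terminal object (for $\gamma>0$), so that the passage to the reflective subposet is genuinely necessary, is a worthwhile point. One presentational caveat: the square as printed labels the left vertical arrow $1\times +$, and you silently correct this to $+\times 1$, which is indeed the reading required by the adjoint isomorphism $(+\times 1)_!(1\times +)^*\cong +^*+_!$ that the surrounding proposition uses and that the paper's own restatement of the square at the end of its proof adopts; it would be worth flagging that correction explicitly rather than making it in passing.
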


\begin{proof}
Using (Der4), we know that the square 

\begin{equation}
\centerline{
\label{square2}
\xymatrix{
(+\times 1)/e\ar[r]^{pr} \ar[d]_{\pi} &
\underline{\bbN}\times\underline{\bbN}\times\underline{\bbN} \ar[d]^{+\times 1}\ar@{}[dl]|\swtrans \ar@{}[dl]<-1.5ex>|{\alpha}\\
e \ar[r]_{i}&
\underline{\bbN}\times\underline{\bbN} }}
\end{equation}
is homotopy exact. Our original square is homotopy exact if and only if its pasting with the above is homotopy exact, as homotopy exactness can be checked pointwise and $\underline{\bbN}\times\underline{\bbN}$ has precisely one object.

The category $(+\times 1/e)$, by definition has objects $(\bullet\in \underline{\bbN}\times\underline{\bbN}, (a,b)\colon\bullet\rightarrow\bullet)$, of which the information we can just condense to $(a,b)\in\bbN\times\bbN$. A morphism $(a,b)\rightarrow (c,d)$ will be a morphism $(j,k,l)$ in $\underline{\bbN}\times\underline{\bbN}\times\underline{\bbN}$ such that $(+\times 1)(j,k,l)=(a-c,b-d)$-i.e. that $j+k+c=a$ and $l+d=b$.

Hence, if the pasting of $\eqref{square1}$ and $\eqref{square2}$ can be shown to be homotopy exact, from the homotopy exactness of $\eqref{square2}$ we would obtain that $\eqref{square1}$ is homotopy exact. This pasting of $\eqref{square1}$ and $\eqref{square2}$ looks like

\begin{equation}
\label{pasting}
\centerline{
\xymatrix{
(+\times 1)/e \ar[r]^{pr} \ar[d]_{\pi} &
\underline{\bbN}\times\underline{\bbN}\times\underline{\bbN} \ar[r]^{1\times +}\ar[d]_{+\times 1}\ar@{}[dl]|\swtrans \ar@{}[dl]<-1.5ex>|{\alpha}&
\underline{\bbN}\times\underline{\bbN} \ar[d]^{+} \ar@{}[dl]|\swtrans \ar@{}[dl]<-1.5ex>|{\textrm{Id}}\\
e \ar[r]_{i}& \underline{\bbN}\times\underline{\bbN}\ar[r]^{+} &\underline{\bbN}} }
\end{equation}

Then we can whisker the natural transformations, to make this a single square with natural transformation. Below, we take the functor $p$ to be the composition of the top line in \eqref{pasting}: $$(+\times 1)/e\rightarrow\underline{\bbN}\times\underline{\bbN}\times\underline{\bbN}\rightarrow\underline{\bbN}\times\underline{\bbN}$$

\begin{equation}
\centerline{
\xymatrix{
(+\times 1)/e \ar[r]^{p} \ar[d]_{\pi}&
\underline{\bbN}\times\underline{\bbN} \ar[d]^{+}\ar@{}[dl]|\swtrans \ar@{}[dl]<-1.5ex>|{\alpha}\\
e \ar[r]_{i}&
\underline{\bbN}} }
\label{square3}
\end{equation}

It is not clear why this square would be homotopy exact given its current description. From (Der4) we consider the square

\begin{equation}
\centerline{
\xymatrix{
(+/e) \ar[r]^{pr} \ar[d]_{\pi} &
\underline{\bbN}\times\underline{\bbN} \ar[d]^{+}\ar@{}[dl]|\swtrans \ar@{}[dl]<-1.5ex>|{\alpha}\\
e \ar[r]_{i}&
\underline{\bbN} }} 
\label{square4}
\end{equation}

which we know to be homotopy exact. Here an object of the category $(+/e)$ has objects $(\bullet\in\underline{\bbN},m\colon\bullet\rightarrow\bullet)$, information that we can just condense to $m\in\bbN$. A morphism $(m)\rightarrow (n)$ is a morphism $(i,j)$ in $\underline{\bbN}\times\underline{\bbN}$ such that $+(i,j))=m-n$. We would like to write $\eqref{square3}$ as a pasting of $\eqref{square4}$ with another square, i.e. obtain a pasting of the form 

\begin{equation}
\centerline{
\xymatrix{
(+\times 1)/e \ar[r]^{G} \ar[d]_{\pi}&
(+/e)\ar[r]^{pr}\ar[d]_{\pi}\ar@{}[dl]|\swtrans \ar@{}[dl]<-1.5ex>|{\alpha}&
\underline{\bbN}\times\underline{\bbN} \ar[d]^{+}\ar@{}[dl]|\swtrans \ar@{}[dl]<-1.5ex>|{\textrm{Id}}\\
e \ar[r]_{Id}&
e\ar[r]_{i} &\underline{\bbN}} }
\label{square5}
\end{equation} such that $\textrm{pr}_{(+/e)}\circ G=(1\times +)\circ\textrm{pr}_{(+\times 1)/e}$, \ie we want to find a functor $G:(+\times 1/e)\rightarrow (+/e)$ such that the below square commutes:

$$\begin{CD}
(+\times 1/e)@>G>> (+/e)\\
@V\emph{pr}VV @V\emph{pr}VV\\
\underline{\bbN}\times\underline{\bbN}\times\underline{\bbN}@>1\times +>>\underline{\bbN}\times\underline{\bbN}
\end{CD}$$

Upon further examination, such a functor will be induced by $(1\times +)$ in the following way; taking an object $(a,b)$ in $(+\times 1/e)$ to $(a+b)\in (+/e)$, and a morphism $(j,k,l)\colon (a,b)\rightarrow (c,d)$ to $(j,k+l)\colon (a+b)\rightarrow (c+d)$. Checking that the above commutes tells us that $G$ is precisely what is required. Thus, we can re-write $\eqref{square3}$ in the guise of $\eqref{square5}$. 

The right-hand square of this pasting $\eqref{square5}$ is homotopy exact by (Der4), so it simply suffices to prove the left-hand square is homotopy exact. This most obvious step would be to check that the the functor $G$ is a right adjoint, by \cite[Proposition 1.24]{Groth13}, but this fails. Instead, let us denote $(+\times 1/e)=\cat C$, $(+/e)=\cat D$, and $\cat C_0\subset\cat C$ be the full subcategory with objects $(a,0)$. We form the pasted square 

\begin{equation}
\centerline{
\xymatrix{
\cat C_0 \ar[r]^{i_0} \ar[d]_{\pi}&
\cat C \ar[d]^{\pi}\ar[r]^{G}\ar@{}[dl]|\swtrans \ar@{}[dl]<-1.5ex>|{\textrm{Id}}&
\cat D\ar[d]^{\pi}\ar@{}[dl]|\swtrans \ar@{}[dl]<-1.5ex>|{\textrm{Id}}\\
e \ar[r]_{1}&
e\ar[r]_{1}& e }}
\label{square6}
\end{equation}

Here, I claim that both the inclusion $i_0:\cat C_0\hookrightarrow\cat C$ and $G\circ i_0$ are right adjoints. This will prove that both the left-hand square and the pasting are homotopy exact squares, and hence that the right-hand square is. We detail the respective adjunctions.

The left adjoint $L$ to $i_0:\cat C_0\hookrightarrow\cat C$ takes $(a,b)\in\cat C$ to $(a,0)\in\cat C_0$ and a map $(j,k,l)\colon (a,b)\rightarrow (c,d)$ to $(j,k,0):(a,0)\rightarrow (c,0)$. So we take $(a,b)\in\cat C$ and $(c,0)\in\cat C_0$, then $\textrm{Hom}_{\cat C_0}((a,0),(c,0))\cong\textrm{Hom}_{\cat C}((a,b),(c,0))$. The former consists of maps of the form $(j,k,0)$ with $j+k+c=a$, while the latter consists of maps of the form $(j,k,b)$ with $j+k+c=a$, rendering an obvious bijection.

The left adjoint $F$ to $\cat C_0\hookrightarrow\cat C\rightarrow\cat D$ takes $(m)\in\cat D$ to $(m,0)\in\cat C_0$ and a map $(a,b):(m)\rightarrow (n)$ to $(a,b,0):(a,0)\rightarrow (c,0)$. Take $(a,0)\in\cat C_0$ and $(n)\in\cat D$. Then $\textrm{Hom}_{\cat C_0}((n,0),(a,0))\cong\textrm{Hom}_{\cat D}((n),(a))$, as the former consists of maps $(j,k,0)$ where $j+k+a=n$, while the latter consists of maps $(j,k)$ where $j+k+a=n$, with the obvious bijection. Therefore, both $i_0:\cat C_0\hookrightarrow \cat C$ and $G\circ i_0$ are both right adjoints.

Therefore, the square

\begin{equation}
\centerline{
\xymatrix{
\cat C \ar[r]^{G} \ar[d]_{\pi} &
\cat D \ar[d]^{\pi} \ar@{}[dl]|\swtrans \ar@{}[dl]<-1.5ex>|{\alpha}\\
e \ar[r]_{1}&
e }}
\end{equation}

is homotopy exact. This implies that $\eqref{square5}$ is homotopy exact, which is the same square as $\eqref{square3}$. Therefore, our original square $\eqref{square1}$ is homotopy exact. Recall it is the below square:

\begin{equation}
\centerline{
\xymatrix{
\underline{\bbN}\times\underline{\bbN}\times\underline{\bbN} \ar[r]^{1\times +} \ar[d]_{+\times 1}&
\underline{\bbN}\times\underline{\bbN} \ar[d]^{+}\ar@{}[dl]|\swtrans \ar@{}[dl]<-1.5ex>|{\textrm{Id}}\\
\underline{\bbN}\times\underline{\bbN} \ar[r]_{+}&
\underline{\bbN} }}
\end{equation}

Taking the respective adjoints, we have an isomorphism $$(+\times 1)_!(1\times +)^*\cong +^*+_!.$$ 
\end{proof}

Now we can complete the proof of the proposition.

\begin{proof}
From the previous lemma we have 

\begin{eqnarray*}
+^*X &=& +^* (i_!\mathbbm{1}_{\bbD}\boxtimes_{\bbA^1_{\bbD}} X)\\
&=& +^* +_!(i_!\mathbbm{1}_{\bbD}\boxtimes_{\bbD} X)
\end{eqnarray*}

As $+^*+_!\cong (+\times 1)_!(1\times +)^*$, we have $$+^* +_!(i_!\mathbbm{1}_{\bbD}\boxtimes_{\bbD} X)\cong(+\times 1)_!(1\times +)^*(i_!\mathbbm{1}_{\bbD}\boxtimes_{\bbD} X).$$ We also have that $$(1\times i)_!X\boxtimes_{\bbA^1_{\bbA^1_{\bbD}}} +^*i_!\mathbbm{1}_{\bbD}=(+\times 1)_!(1\times +)^*(X\boxtimes_{\bbD} i_!\mathbbm{1}_{\bbD}).$$ The proof of the theorem will rely on the two isomorphic representations of $+^*X$ that we have produced.
\end{proof}

Lastly, we can tackle the proof of the Theorem. Roughly speaking, for a cocontinuous monoidal morphism of derivators $F\colon\bbA^1_{\bbD}\rightarrow\mathbb{E}$, $F_0=Fi_!$ and $F^{\underline{\bbN}}(+^*i_!\mathbbm{1}_{\bbD})=(\mathbbm{1}_{\mathbb{E}},\alpha)$ correspond to $R\rightarrow S$ and the assignment $t\in R[t]$ respectively.

\begin{proof} 
We know that $F^{\underline{\bbN}}((1\times i)_!X)=F_0^{\underline{\bbN}}X$, while $F^{\underline{\bbN}}(+^*i_!\mathbbm{1}_{\bbD})=(\mathbbm{1}_{\mathbb{E}},\alpha)$ by characterization of having type $\alpha$. Therefore, $F^{\underline{\bbN}}(+^*X)=F_0^{\underline{\bbN}}X\boxtimes_{\bbA^1_{\bbE}} (\mathbbm{1}_{\mathbb{E}},\alpha)$. Thus,
 
 \begin{eqnarray*}
 F(X)&\cong&i^*F^{\underline{\bbN}}(+^*X)\\
 &\cong&i^*F^{\underline{\bbN}}(i_!X\boxtimes_{\bbA^1_{\bbA^1_{\bbD}}}(+^*(i_!\mathbbm{1}_{\bbD})))\\
 &\cong&i^*(F^{\underline{\bbN}}(i_!X)\boxtimes_{\bbA^1_{\mathbb{E}}}F^{\underline{\bbN}}(+^*\mathbbm{1}_{\bbA^1_{\bbD}}))\\
 &=&i^*(F_0^{\underline{\bbN}}X\boxtimes_{{\bbA^1}_{\mathbb{E}}}(\mathbbm{1}_{\mathbb{E}},\alpha))\\
 &=&\textrm{ev}_{\alpha}F_0^{\underline{\bbN}}X
 \end{eqnarray*}
 
 Above, the first isomorphism $F(X)\cong i^*F^{\underline{\bbN}}(+^*X)$ is since $F$ is a morphism of derivators. The second isomorphism is the decomposition $$+^*X=+^*i_!\mathbbm{1}_{\bbD}\boxtimes_{\bbA^1_{\bbA^1_{\bbD}}}(1\times i)_!X,$$ and the third just follows by co-continuity of $F$. 
 
Our very last equality is precisely the definition of the evaluation at $\alpha$ morphism.
 
Therefore, cocontinuous monoidal morphisms of derivators $F:\bbA^1_{\bbD}\rightarrow\mathbb{E}$ can be determined simply by their base $F_0$ and their type $\alpha$, in that a morphism $F$ with designated base $F_0$ and type $\alpha$ is simply $\textrm{ev}_{\alpha} F_0^{\underline{\bbN}}$.
\end{proof}

\begin{definition}
Let $\bbD$, $\mathbb{E}$ be two derivators. Let $PDER(\bbD,\mathbb{E})$ denote the category whose objects are morphisms of prederivators $\bbD\rightarrow\mathbb{E}$ and morphisms are modifications. 

Let $PDER_!(\bbD,\mathbb{E})$ denote the full category whose objects are cocontinuous morphisms of derivators $\bbD\rightarrow\mathbb{E}$]. Similarly, if $\bbD$ and $\mathbb{E}$ are monoidal derivators, then we can let $PDER_{\otimes}(\bbD,\mathbb{E})$ denote the category of monoidal morphisms and pseudonatural transformations. Finally, if we are looking at $PDER(\bbA^1_{\bbD},\mathbb{E})$ we can let $PDER_{\otimes,!,\bbD}(\bbA^1_{\bbD},\mathbb{E})$ denote the cocontinuous, monoidal morphisms $F:\bbA^1_{\bbD}\rightarrow\mathbb{E}$ such that $F\circ i_!$ is a given morphism $\bbD\rightarrow\mathbb{E}$. 
\end{definition}

\begin{theorem}
We describe $\eqref{maintheorem1}$ via a more global perspective.

\begin{enumerate}
\item There exists a functor $$PDER_{\otimes,!}(\bbA^1_{\bbD},\mathbb{E})\rightarrow\mathbb{E}(\underline{\bbN})$$ that sends a cocontinuous, monoidal morphism $F\colon\bbA^1_{\bbD}\rightarrow\mathbb{E}$ to $F(+^*i_!\mathbbm{1}_{\bbD})$. 

\item Fix a cocontinuous monoidal morphism of derivators $F_0\colon\bbD\rightarrow\mathbb{E}$. If we restrict to cocontinuous monoidal morphisms with base $F_0$, this induces an equivalence of categories $$\textrm{PDER}_{\otimes,!,\bbD}(\bbA^1_{\bbD},\mathbb{E})\cong\{X:i^*X=\mathbbm{1}_{\mathbb{E}}\}\subset\bbA^1_{\mathbb{E}}(e).$$ Call the latter category $\mathbb{E}_{\mathbbm{1}}$, which we can also think of as all the coherent endomorphisms of the identity in $\mathbb{E}$. Here $PDER_{\otimes,!,\mathbb{D}}(\mathbb{A}^1_{\bbD},\mathbb{E})$ consists of cocontinuous, strong monoidal morphisms $F$ of derivators between $\bbA^1_{\bbD}\rightarrow\mathbb{E}$ with $Fi_!$ equal to some fixed $F_0$. 

In the above equivalence, the forward direction functor $$\textrm{PDER}_{\otimes,!,\bbD}(\bbA^1_{\bbD},\mathbb{E})\rightarrow\mathbb{E}_{\mathbbm{1}}$$ is the functor in the first part. Its inverse takes $(\mathbbm{1},\alpha)$ to $\textrm{ev}_{\alpha}\circ F_0^{\underline{\bbN}}$. 

\item Alternatively, we have an equivalence of categories $$\textrm{PDER}_{\otimes,!}(\bbA^1_{\bbD},\mathbb{E})\cong\mathbb{E}_{\mathbbm{1}}\times\textrm{PDER}_{\otimes,!}(\bbD,\mathbb{E}).$$ The forward direction functor splits $F$ into the information of its type $\alpha$ and its base $F_0$. Its inverse takes a coherent endomorphism $(\mathbbm{1}_{\mathbb{E}},\alpha)$ plus a cocontinuous, monoidal morphism $F_0\colon\bbD\rightarrow\mathbb{E}$ to $\textrm{ev}_{\alpha}F_0^{\underline{\bbN}}$. 
\end{enumerate}
\label{maintheorem2}
\end{theorem}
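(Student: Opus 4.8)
The strategy is to present Theorem~\eqref{maintheorem2} as the $2$-categorical packaging of Theorem~\eqref{maintheorem1}, so that everything reduces to routine functoriality checks together with the promotion of the object-level bijection of \eqref{maintheorem1} to an equivalence of categories. For part~(1), the rule $F\mapsto F(+^*i_!\mathbbm{1}_{\bbD})$ is just evaluation of a morphism of prederivators at the fixed object $+^*i_!\mathbbm{1}_{\bbD}\in\bbA^1_{\bbD}(\underline{\bbN})$: a modification $\theta\colon F\Rightarrow G$ has an $\underline{\bbN}$-component $\theta_{\underline{\bbN}}\colon F_{\underline{\bbN}}\Rightarrow G_{\underline{\bbN}}$, and one sends $\theta$ to the value $(\theta_{\underline{\bbN}})_{+^*i_!\mathbbm{1}_{\bbD}}$; preservation of identities and of vertical composition is immediate, so this is a functor $\textrm{PDER}_{\otimes,!}(\bbA^1_{\bbD},\mathbb{E})\to\mathbb{E}(\underline{\bbN})$. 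By part~(1) of \eqref{maintheorem1} its values on objects lie in $\mathbb{E}_{\mathbbm{1}}$, and since $\mathbb{E}_{\mathbbm{1}}$ is a full subcategory no condition is imposed on the morphisms, so it corestricts to the \emph{type} functor $\Psi\colon\textrm{PDER}_{\otimes,!}(\bbA^1_{\bbD},\mathbb{E})\to\mathbb{E}_{\mathbbm{1}}$.

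The one genuinely computational ingredient is the identity $(\textrm{ev}_{\alpha})^{\underline{\bbN}}(+^*i_!\mathbbm{1}_{\mathbb{E}})\cong(\mathbbm{1}_{\mathbb{E}},\alpha)$, that is, that the evaluation morphism $\textrm{ev}_{\alpha}\colon\bbA^1_{\mathbb{E}}\to\mathbb{E}$ has base $\textrm{Id}_{\mathbb{E}}$ and type $\alpha$ (the derivator analogue of ``$t\mapsto r$ under $R[t]\to R[t]/(t-r)$''). The base statement is $\textrm{ev}_{\alpha}\circ i_!\cong\textrm{Id}$, already recorded. For the type statement I would unwind Definition~\ref{ev alpha}, writing $\textrm{ev}_{\alpha}(Y)=i^*\bigl(Y\boxtimes_{\bbA^1_{\mathbb{E}}}(\mathbbm{1},\alpha)\bigr)=i^*+_!\bigl(Y\boxtimes_{\mathbb{E}}(\mathbbm{1},\alpha)\bigr)$, and run the same kind of homotopy-exactness argument as in the lemma identifying the two descriptions of $\textrm{ev}_1$, using that $+^*i_!\mathbbm{1}_{\mathbb{E}}=+^*\mathbbm{1}_{\bbA^1_{\mathbb{E}}}$ together with the unit and coherence laws of the $\bbA^1$-monoidal structure. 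Combined with the fact that a cocontinuous strong monoidal $F_0\colon\bbD\to\mathbb{E}$ satisfies $(F_0^{\underline{\bbN}})^{\underline{\bbN}}(+^*i_!\mathbbm{1}_{\bbD})\cong+^*i_!\mathbbm{1}_{\mathbb{E}}$ (pseudonaturality in the functor $+$, cocontinuity, and preservation of the unit), this yields: the type of $\textrm{ev}_{\alpha}\circ F_0^{\underline{\bbN}}$ is $\alpha$.

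For part~(2), fix $F_0\colon\bbD\to\mathbb{E}$ cocontinuous strong monoidal and build the inverse $\Phi\colon\mathbb{E}_{\mathbbm{1}}\to\textrm{PDER}_{\otimes,!,\bbD}(\bbA^1_{\bbD},\mathbb{E})$. On objects $\Phi(\mathbbm{1},\alpha)=\textrm{ev}_{\alpha}\circ F_0^{\underline{\bbN}}$, which is cocontinuous strong monoidal by the lemma preceding \eqref{maintheorem1} and has base $F_0$ by the remark following it. On morphisms, $\textrm{ev}_{(-)}$ is itself functorial in the coherent endomorphism: since $\textrm{ev}_{\alpha}$ is assembled from the bifunctor $\boxtimes_{\mathbb{E}}$, the Kan extension $+_!$, and the pullback $i^*$, a morphism $\phi\colon(\mathbbm{1},\alpha)\to(\mathbbm{1},\beta)$ of $\mathbb{E}(\underline{\bbN})$ induces a modification $\textrm{ev}_{\phi}\colon\textrm{ev}_{\alpha}\Rightarrow\textrm{ev}_{\beta}$, and one sets $\Phi(\phi):=\textrm{ev}_{\phi}\ast F_0^{\underline{\bbN}}$. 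Then $\Psi\circ\Phi\cong\textrm{Id}$ follows by unwinding the constructions: on objects it is the type computation of the previous paragraph, and on morphisms one checks directly that evaluating $\Phi(\phi)$ at $+^*i_!\mathbbm{1}_{\bbD}$ returns $\phi$. Conversely $\Phi\circ\Psi\cong\textrm{Id}$ is exactly part~(2) of \eqref{maintheorem1}: a cocontinuous monoidal $F$ of base $F_0$ and type $\alpha$ is isomorphic to $\textrm{ev}_{\alpha}\circ F_0^{\underline{\bbN}}=\Phi(\Psi(F))$. Part~(3) then follows by letting the base vary: $F\mapsto F\circ i_!$ is a functor $\textrm{PDER}_{\otimes,!}(\bbA^1_{\bbD},\mathbb{E})\to\textrm{PDER}_{\otimes,!}(\bbD,\mathbb{E})$ (as $i_!$ is cocontinuous strong monoidal and precomposition sends modifications to whiskered ones), its pairing with $\Psi$ is the forward functor to $\mathbb{E}_{\mathbbm{1}}\times\textrm{PDER}_{\otimes,!}(\bbD,\mathbb{E})$, the quasi-inverse is $\bigl((\mathbbm{1},\alpha),F_0\bigr)\mapsto\textrm{ev}_{\alpha}\circ F_0^{\underline{\bbN}}$ (functorial in $\alpha$ via $\textrm{ev}_{(-)}$ and in $F_0$ via the $2$-functor $(-)^{\underline{\bbN}}$, compatibly by interchange), and the two composites are naturally isomorphic to identities by part~(2) --- the essential point being that the fibre $\mathbb{E}_{\mathbbm{1}}$ of the base functor does not depend on $F_0$.

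The main obstacle is the naturality in $F$ of the isomorphism $F\isotoo\textrm{ev}_{\alpha}\circ F_0^{\underline{\bbN}}$ produced in the proof of part~(2) of \eqref{maintheorem1}: one must verify that the chain $F(X)\isotoo i^*F^{\underline{\bbN}}(+^*X)\isotoo\cdots\isotoo\textrm{ev}_{\alpha}F_0^{\underline{\bbN}}X$ of that proof is natural in $F$ and $X$ simultaneously --- each link being induced either by the pseudonaturality $2$-cells $\gamma^F$ of $F$ (natural in $F$ by the very definition of a modification) or by structure isomorphisms not involving $F$ --- and that the resulting pointwise isomorphism satisfies the coherence making it a modification, and is inverse to the $(\textrm{type},\textrm{base})$ functor. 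Granting this, together with the homotopy-exactness computation giving the type of $\textrm{ev}_{\alpha}$ and the routine verifications that $\textrm{ev}_{(-)}$, $(-)^{\underline{\bbN}}$ and precomposition with $i_!$ are functorial and that the stated $2$-cells obey the modification axioms, parts~(1)--(3) follow.
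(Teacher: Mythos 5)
Your proposal is correct and follows essentially the same route as the paper: both directions of the equivalence are given by the same functors (the type functor $F\mapsto F^{\underline{\bbN}}(+^*i_!\mathbbm{1}_{\bbD})$ and $(\mathbbm{1},\alpha)\mapsto \textrm{ev}_{\alpha}\circ F_0^{\underline{\bbN}}$, with morphisms handled by whiskering), and the essential content is delegated to Theorem \eqref{maintheorem1}. If anything, you are more explicit than the paper about the two verifications it passes over with ``the compositions are both seen to be isomorphisms,'' namely that the type of $\textrm{ev}_{\alpha}\circ F_0^{\underline{\bbN}}$ is $\alpha$ and that the isomorphism $F\cong\textrm{ev}_{\alpha}F_0^{\underline{\bbN}}$ is natural in $F$.
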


\begin{proof}
\begin{enumerate}
\item The first part is clear, every pseudonatural transformation between two morphisms $F,G:\bbA^1_{\bbD}\rightarrow\mathbb{E}$ gives a morphism in $\mathbb{E}(\underline{\bbN})$, $$F(+^*i_!\mathbbm{1}_{\bbD})\rightarrow G(+^*i_!\mathbbm{1}_{\bbD}).$$

\item First it is clear by $\eqref{maintheorem1}$ that the functor in part (1) can have its codomain restricted to to $\mathbb{E}_{\mathbbm{1}}$. 

The two functors between $PDER_{\otimes, !,\mathbb{D}}(\bbA^1_{\bbD},\mathbb{E})$ and $\mathbb{E}_{\mathbbm{1}}$ are as follows. Given $F:\bbA^1_{\bbD}\rightarrow\mathbb{E}$, we send it to $F_{\underline{\bbN}}(+^*i_!\mathbbm{1}_{\bbD})\in\mathbb{E}_{\mathbbm{1}}$.  For a monoidal natural transformation $F\rightarrow F'$, we send it to the morphism $$F^{\underline{\bbN}}(+^*i_!\mathbbm{1}_{\bbD})\rightarrow F'^{\underline{\bbN}}(+^*i_!\mathbbm{1}_{\bbD}).$$

Conversely, given $(\mathbbm{1},\alpha)\in\mathbb{E}_{\mathbbm{1}}$ we send it to $\textrm{ev}_{\alpha}\circ F_0^{\underline{\bbN}}\colon\bbA^1_{\bbD}\rightarrow\mathbb{E}$, which we know to be in $PDER_{\otimes,!,\mathbb{D}}(\mathbb{A}^1_{\bbD},\mathbb{E})$. Given a morphism $(\mathbbm{1},\alpha)\rightarrow (\mathbbm{1},\beta)\in\mathbb{E}_{\mathbbm{1}}$, there is the corresponding monoidal natural transformation $\textrm{ev}_{\alpha}\rightarrow\textrm{ev}_{\beta}$. Recall the definition of the evaluation at $\alpha$ morphism, $\eqref{ev alpha}$; given a morphism $g\colon(\mathbbm{1},\alpha)\rightarrow (\mathbbm{1},\beta)$ there is a corresponding morphism, $\textrm{Id}_X\boxtimes_{\bbA^1_{\bbD}} g$ from $$i^*(X\boxtimes_{\bbA^1_{\bbD}} (\mathbbm{1},\alpha))\rightarrow i^*(X\boxtimes_{\bbA^1_{\bbD}}(\mathbbm{1},\beta))$$ and these paste to become a monoidal natural transformation $\textrm{ev}_{\alpha}\rightarrow\textrm{ev}_{\beta}$. Hence we also obtain a monoidal natural transformation $\textrm{ev}_{\alpha} F_0^{\underline{\bbN}}\rightarrow\textrm{ev}_{\beta}F_0^{\underline{\bbN}}$.

The compositions $PDER_{\otimes,!,\mathbb{D}}(\mathbb{A}^1_{\bbD},\mathbb{E})\rightarrow\mathbb{E}_{\mathbbm{1}}\rightarrow PDER_{\otimes,!,\mathbb{D}}(\mathbb{A}^1_{\bbD},\mathbb{E})$ and $\mathbb{E}_{\mathbbm{1}}\rightarrow PDER_{\otimes,!,\mathbb{D}}(\mathbb{A}^1_{\bbD},\mathbb{E})\rightarrow\mathbb{E}_{\mathbbm{1}}$ are both seen to be isomorphisms, showing that there is actually an equivalence between the two categories.

\item As in the previous part, given $F\in PDER_{\otimes,!}(\bbA^1_{\bbD},\mathbb{E})$ we map it to $$(F(+^*i_!\mathbbm{1}_{\bbD}),F\circ i_!)$$ For a morphism $F\rightarrow G$ in $PDER_{\otimes,!}(\bbA^1_{\bbD},\mathbb{E})$, we can take them to the morphisms $F(+^*i_!\mathbbm{1}_{\bbD})\rightarrow G(+^*i_!\mathbbm{1}_{\bbD})$ and $F\circ i_!\rightarrow G\circ i_!$ in the categories $\mathbb{E}_{\mathbbm{1}}$ and $PDER_{\otimes,!}(\bbD,\mathbb{E})$. 

In the opposite direction, we take a pair $(\mathbbm{1}_{\mathbb{E}},\alpha)$ and $F_0\colon\bbD\rightarrow\mathbb{E}$ and send it to $$\textrm{ev}_{\alpha}\circ F_0^{\underline{\bbN}}\colon\bbA^1_{\bbD}\rightarrow\mathbb{E}.$$ Given a pair of morphisms $$(\mathbbm{1}_{\mathbb{E}},\alpha)\rightarrow (\mathbbm{1}_{\mathbb{E}},\beta), F_0\rightarrow G_0,$$ we have a transformation $$\textrm{ev}_{\alpha} F_0^{\underline{\bbN}}\rightarrow \textrm{ev}_{\alpha} G_0^{\underline{\bbN}}\rightarrow \textrm{ev}_{\beta} G_0^{\underline{\bbN}}$$ where the first arrow is given by the transformation $F_0\rightarrow G_0$ and the second induced by the transformation $\textrm{ev}_{\alpha}\rightarrow\textrm{ev}_{\beta}$ as described in the previous part. From $\eqref{maintheorem1}$, we know that these two functors are essential inverses to each other.
\end{enumerate}
\end{proof}

We note one special case below, when the base is the identity.

\begin{corollary}
The only monoidal morphisms $F\colon\bbA^1_{\bbD}\rightarrow\bbD$ that $F\circ i_!$ is the identity are the \emph{evaluation at $\alpha$} morphisms $\textrm{ev}_{\alpha}$. 
\end{corollary}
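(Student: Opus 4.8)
The plan is to read this off directly from Theorem \eqref{maintheorem1} by specializing the target to $\mathbb{E}=\bbD$ and the base to $F_0=\textrm{Id}_{\bbD}$. (Here, as throughout this section, a ``monoidal morphism'' is taken to be a cocontinuous strong monoidal morphism, matching the hypotheses of that theorem.)

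First I would take a cocontinuous monoidal morphism $F\colon\bbA^1_{\bbD}\rightarrow\bbD$ with $F\circ i_!=\textrm{Id}_{\bbD}$ and observe that its base, in the sense of Theorem \eqref{maintheorem1}, is then exactly $F_0=F\circ i_!=\textrm{Id}_{\bbD}$. Part (1) of that theorem produces the type of $F$, namely the coherent endomorphism $\alpha$ with $F^{\underline{\bbN}}(+^*i_!\mathbbm{1}_{\bbD})=(\mathbbm{1}_{\bbD},\alpha)$, and part (2) then identifies $F$ with $\textrm{ev}_{\alpha}\circ F_0^{\underline{\bbN}}$. The only remaining point is that the shift of the identity morphism is again the identity, $(\textrm{Id}_{\bbD})^{\underline{\bbN}}=\textrm{Id}_{\bbA^1_{\bbD}}$, so that $\textrm{ev}_{\alpha}\circ F_0^{\underline{\bbN}}=\textrm{ev}_{\alpha}$, giving $F\cong\textrm{ev}_{\alpha}$.

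For the converse inclusion I would recall that each $\textrm{ev}_{\alpha}\colon\bbA^1_{\bbD}\rightarrow\bbD$ is cocontinuous and strong monoidal by the two propositions of Section 5, and that $\textrm{ev}_{\alpha}\circ i_!=\textrm{Id}_{\bbD}$, as was already observed immediately after the lemma comparing the two definitions of evaluation at $1$. Hence the $\textrm{ev}_{\alpha}$ are precisely the cocontinuous monoidal morphisms $\bbA^1_{\bbD}\rightarrow\bbD$ splitting $i_!$.

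I do not expect any genuine obstacle here: the whole content is carried by Theorem \eqref{maintheorem1}, and the corollary is simply its $\mathbb{E}=\bbD$, $F_0=\textrm{Id}_{\bbD}$ case. If one prefers the more global formulation, the same conclusion drops out of Theorem \eqref{maintheorem2}(2): fixing $F_0=\textrm{Id}_{\bbD}$ gives an equivalence $\textrm{PDER}_{\otimes,!,\bbD}(\bbA^1_{\bbD},\bbD)\simeq\bbD_{\mathbbm{1}}$ whose inverse sends $(\mathbbm{1}_{\bbD},\alpha)$ to $\textrm{ev}_{\alpha}\circ(\textrm{Id}_{\bbD})^{\underline{\bbN}}=\textrm{ev}_{\alpha}$, so that every cocontinuous monoidal morphism $\bbA^1_{\bbD}\rightarrow\bbD$ with base the identity is isomorphic to some $\textrm{ev}_{\alpha}$.
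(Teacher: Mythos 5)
Your proposal is correct and matches the paper's own (very brief) justification: the corollary is read off from Theorem \eqref{maintheorem1} (or \eqref{maintheorem2}) by taking $\mathbb{E}=\bbD$ and base $F_0=\textrm{Id}_{\bbD}$, together with the observation that each $\textrm{ev}_{\alpha}$ is a cocontinuous strong monoidal section of $i_!$. You simply spell out the specialization (including $(\textrm{Id}_{\bbD})^{\underline{\bbN}}=\textrm{Id}_{\bbA^1_{\bbD}}$) more explicitly than the paper does.
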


Indeed it's clear by the definition of the evaluation at $\alpha$ morphism that each one is a section to the structure morphism $i_!$. Now we have seen that these are the only sections. 

Now we aim to extend this result to $\bbA^n_{\bbD}$ in a natural way. Let us fix the following notation.

\begin{enumerate}
\item Let $i_n\colon\underline{\bbN}^{n-1}\rightarrow\underline{\bbN}^n$ denote the functor $1_{\underline{\bbN}^{n-1}}\times i$
\item Let $0\leq m<n$ be two integers. Let $i_{m,n}$ denote the composition $$i_n\circ i_{n-1}\circ\cdots\circ i_{m+1}\colon\underline{\bbN}^m\rightarrow\underline{\bbN}^n$$ 
\item Let $F\colon\bbA^n_{\bbD}\rightarrow\mathbb{E}$ be a cocontinuous monoidal morphism of derivators. Let $F_{m,n}$ denote the morphism $$F\circ (i_{m,n})_!\colon\bbA^m_{\bbD}\rightarrow\mathbb{E}$$ for $0\leq m<n$. 
\end{enumerate}

The idea for generating a similar universal property for $\bbA^n$ basically consists of iterating the universal property for $\bbA^1$ along the inclusions $i_k$. Note that if we think of $\bbA^n_{\bbD}$ as $\bbA^1_{\bbA^{n-1}_{\bbD}}$, then with the above notation $F_{n-1,n}$ is the base of the morphism of derivators $F\colon\bbA^n_{\bbD}\rightarrow\mathbb{E}$, and more generally $F_{m-1,n}$ is the base of the  morphism $$F_{m,n}\colon\bbA^m_{\bbD}\rightarrow\mathbb{E}.$$ So we can use the universal property of $\bbA^1$ (\eqref{maintheorem1}, \eqref{maintheorem2}) to bootleg up to $\bbA^n$.

\begin{corollary}
We have the following analogues of \eqref{maintheorem2}. 
\begin{enumerate}
\item There is a functor $$PDER_{\otimes,!}(\bbA^n_{\bbD},\mathbb{E})\rightarrow\Pi_{n}\mathbb{E}(\underline{\bbN}),$$ sending a cocontinuous monoidal morphism $F\cong\bbA^n_{\bbD}\rightarrow\mathbb{E}$ to the product of the types of $F_{m,n}$ for each $1\leq m\leq n$, from the universal property of $\bbA^1$.
\item Fix a cocontinuous, monoidal morphism of derivators $G\colon\bbD\rightarrow\mathbb{E}$. Consider the subcategory $$PDER_{\otimes,!,\bbD}(\bbA^n_{\bbD},\mathbb{E})\subset PDER_{\otimes,!}(\bbA^n_{\bbD},\mathbb{E})$$ consisting of morphisms $F$ with $F_{0,n}=G$. The functor in part (1) induces an equivalence of categories with $\Pi_{n}\mathbb{E}_1$. 
\item From (2) we have an equivalence of categories $PDER_{\otimes,1}(\bbA^n_{\bbD},\mathbb{E})$ with the category $$PDER_{\otimes,!}(\bbD,\mathbb{E})\times\Pi_n\mathbb{E}_1.$$
\end{enumerate}
\end{corollary}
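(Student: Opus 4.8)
The plan is to induct on $n$, using the identification $\bbA^n_{\bbD} = \bbA^1_{\bbA^{n-1}_{\bbD}}$ and applying the universal property of the affine line (Theorem \eqref{maintheorem1}, Theorem \eqref{maintheorem2}) over the monoidal derivator $\bbA^{n-1}_{\bbD}$ at each stage. The base case $n = 1$ is Theorem \eqref{maintheorem2} verbatim, once one observes that $\mathbb{E}_1$ is the category $\mathbb{E}_{\mathbbm{1}}$ of that theorem and that the monoidal structure on $\bbA^n_{\bbD}$ is the iterated $\bbA^1$-monoidal structure, as noted in Section 4.

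First I would establish a bookkeeping lemma identifying iterated bases and types. Since $i_{m,n} = i_n \circ i_{m,n-1}$ for $0 \le m \le n-1$, functoriality of homotopy left Kan extensions gives $(i_{m,n})_! \cong (i_n)_! \, (i_{m,n-1})_!$; as $(i_n)_! \colon \bbA^{n-1}_{\bbD} \to \bbA^1_{\bbA^{n-1}_{\bbD}}$ is precisely the structure morphism of $\bbA^{n-1}_{\bbD}$, hence cocontinuous and strong monoidal by Section 5, each $(i_{m,n})_!$ and therefore each $F_{m,n} = F \circ (i_{m,n})_!$ is cocontinuous and monoidal and carries a well-defined type $\alpha_m \in \mathbb{E}_1$. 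One then reads off $F_{m,n} \cong (F_{n-1,n})_{m,n-1}$ for all $0 \le m \le n-1$, which identifies $F_{n-1,n}$ as the base of $F$ out of $\bbA^1_{\bbA^{n-1}_{\bbD}}$, $F_{0,n} = (F_{n-1,n})_{0,n-1}$ as the $\bbD$-level base of $F_{n-1,n}$, the type of $F_{m,n}$ ($1 \le m \le n-1$) as the $m$-th type of $F_{n-1,n}$, and $\alpha_n$ (the type of $F_{n,n} = F$) as the type of $F$ out of $\bbA^1_{\bbA^{n-1}_{\bbD}}$. I expect this compatibility check to be the main obstacle; the rest is formal manipulation of equivalences.

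With this in hand, the inductive step goes as follows. Part (3) of Theorem \eqref{maintheorem2} applied to $\bbA^1_{\bbA^{n-1}_{\bbD}}$ gives an equivalence $PDER_{\otimes,!}(\bbA^n_{\bbD},\mathbb{E}) \simeq PDER_{\otimes,!}(\bbA^{n-1}_{\bbD},\mathbb{E}) \times \mathbb{E}_1$, $F \mapsto (F_{n-1,n}, \alpha_n)$, with inverse $(H,\alpha_n) \mapsto \textrm{ev}_{\alpha_n} \circ H^{\underline{\bbN}}$ (well defined by the Lemma preceding Theorem \eqref{maintheorem1}, the remark following it, and the cocontinuity and strong monoidality of the evaluation morphisms). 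Composing with the inductive equivalence $PDER_{\otimes,!}(\bbA^{n-1}_{\bbD},\mathbb{E}) \simeq PDER_{\otimes,!}(\bbD,\mathbb{E}) \times \Pi_{n-1}\mathbb{E}_1$ and invoking the bookkeeping lemma to identify the composite as $F \mapsto (F_{0,n}; \alpha_1, \dots, \alpha_n)$ yields part (3) at level $n$. Part (1) is then the projection of this equivalence onto $\Pi_n\mathbb{E}_1 \subset \Pi_n\mathbb{E}(\underline{\bbN})$, which records the types of $F_{1,n},\dots,F_{n,n}$. For part (2) I would restrict the first equivalence to the full subcategory in which the first coordinate has $\bbD$-level base $G$ — which the bookkeeping lemma identifies with $PDER_{\otimes,!,\bbD}(\bbA^n_{\bbD},\mathbb{E})$ — and then apply part (2) of the inductive hypothesis to that first coordinate, getting $PDER_{\otimes,!,\bbD}(\bbA^n_{\bbD},\mathbb{E}) \simeq \Pi_{n-1}\mathbb{E}_1 \times \mathbb{E}_1 = \Pi_n\mathbb{E}_1$; unwinding, the inverse sends $(\alpha_1,\dots,\alpha_n)$ to the iterate $\textrm{ev}_{\alpha_n}\circ(\cdots(\textrm{ev}_{\alpha_1}\circ G^{\underline{\bbN}})^{\underline{\bbN}}\cdots)^{\underline{\bbN}}$ of the construction of Theorem \eqref{maintheorem1}. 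This closes the induction.
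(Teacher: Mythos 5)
Your proposal is correct and follows essentially the same route as the paper: induction on $n$ via the identification $\bbA^n_{\bbD}=\bbA^1_{\bbA^{n-1}_{\bbD}}$, peeling off one type $\alpha_k$ at a time with Theorem \eqref{maintheorem2} applied over $\bbA^{n-1}_{\bbD}$, and reconstructing $F$ as the iterated composite of evaluation morphisms. Your explicit bookkeeping lemma ($F_{m,n}\cong (F_{n-1,n})_{m,n-1}$) just makes precise the compatibility that the paper's proof uses implicitly in its commutative triangle diagram.
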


\begin{proof}
First we will describe how to generate the product $\Pi_n\mathbb{E}_1$ by induction. The main theorem \eqref{maintheorem2} is the case for $n=1$. Inductively, note the commutative diagram

\centerline{
\xymatrix{
 &\bbA^1_{\mathbb{E}} \ar[d]^{\textrm{ev}_{\alpha_n}}\\
 \bbA^n_{\bbD}\ar[ur]^{F_{n-1,n}^{\underline{\bbN}}}\ar[r]_{F}&\mathbb{E}\\
 \bbA^{n-1}_{\bbD}\ar[u]^{(i_n)_!} \ar[ur]_{F_{n-1,n}} &
  }} where both triangles commute. The $\textrm{ev}_{\alpha_n}$ is obtained from the universal property of $\bbA^1$ on for the derivator $\bbA^{n-1}_{\bbD}$, which gives the commutativity of the top triangle. So for $\bbA^n$ we get the information of $n$ evaluation at $\alpha$ morphisms giving us the requisite functor for (1). 
 
 For (2) it's clear that one can restrict the codomain of the functor in (1) to $\Pi_n \mathbb{E}_1$. Note that suppose we are given a cocontinuous monoidal morphism $F\colon\bbA^n_{\bbD}\rightarrow\mathbb{E}$ with $F_{0,n}=G$. Then $F_{n-1,n}\colon\bbA^{n-1}_{\bbD}\rightarrow\mathbb{E}$ again has base $G$ and we can write $F=\textrm{ev}_{\alpha_n}\circ F_{n-1,n}^{\underline{\bbN}}$. Then one can write $F_{n-1,n}=\textrm{ev}_{\alpha_{n-1}}\circ F_{n-2,n}^{\underline{\bbN}}$, and so forth. 

So given a product $\Pi_{i=1}^{n} (\mathbbm{1}_{\mathbb{E}},\alpha_i)$ and a morphism $G$ which should be equal to $F_{0,n}$ for some $F\colon\bbA^n_{\bbD}\rightarrow\mathbb{E}$, we can recursively define $F_{k+1,n}$ as $\textrm{ev}_k\circ (F_{k,n})^{\underline{\bbN}}$, until we get to $F_{n,n}$ which is simply our desired morphism of derivators.

This gives the inverse to our stated equivalence in part (2), while (3) is simply a re-writing of (2).  
\end{proof}

For us, this is the most telling signal that the definition of $\bbA^1_{\bbD}$ and more generally $\bbA^n_{\bbD}$ is a reasonable one. It mirrors behavior that we would expect polynomial algebras over a commutative ring or the affine spaces over a reasonable scheme to have.

\section*{Acknowledgements}

I would like to thank my advisor, Paul Balmer, for his comments and advice throughout the writing of this paper. Special thanks to Ian Coley for his careful and very helpful edits.


\end{document}